\definecolor{C}{RGB}{9,150,80}
\newcommand{\PP}{\mathbb{P}}
\newcommand{\E}{\mathbb{E}}
\newcommand{\1}{\mathbf{1}}
\newcommand{\R}{\mathbb{R}}
\newcommand{\eps}{\varepsilon}
\newtheorem{theorem}{Theorem}
\newtheorem{lemma}{Lemma}
\newtheorem{corollary}{Corollary}
\renewcommand{\tilde}{\widetilde}
\renewcommand{\hat}{\widehat}
\begin{document}
\title{Non-asymptotic control of the cumulative distribution function of L\'evy processes}
\author{C\'eline Duval  \footnote{Universit\'e de Paris,  MAP5, UMR CNRS
8145, France. E-mail: \href{celine.duval@parisdescartes.fr}{celine.duval@parisdescartes.fr}.} \ \ and \ \ Ester Mariucci \footnote{Otto von Guericke Universität Magdeburg, Germany.
    E-mail: \href{mailto: ester.mariucci@ovgu.de}{ester.mariucci@ovgu.de}.}}
\date{}

\maketitle

\begin{abstract} 
We propose non-asymptotic controls of the cumulative distribution function $\PP(|X_{t}|\ge \eps)$, for any $t>0$, $\eps>0$ and any L\'evy process $X$ such that its L\'evy density is bounded from above by the density of an $\alpha$-stable type L\'evy process in a neighborhood of the origin. The results presented are non-asymptotic and optimal, they apply to a large class of L\'evy processes.
\end{abstract}
\noindent {\sc {\bf Keywords.}} {\small L\'evy processes},  {\small Small jumps}, {Infinitely divisible distributions}.\\
\noindent {\sc {\bf AMS Classification.}} Primary: 60G51, 60E07. Secondary:  62M99.

\section{Introduction and motivations}

The law of any L\'evy process $X$ is the convolution between a Gaussian process, the martingale  $M$ describing its small jumps and a compound Poisson process. However, for most L\'evy processes a closed form expression for the law of their increments is not  known. The core of the problem lies in computing the distribution of the small jumps. This technical limitation makes both inference and simulations difficult for L\'evy processes. To cope with this shortcoming it is usual to approximate a general L\'evy process $X$ with a family of compound Poisson processes by ignoring the jumps smaller than some level $\varepsilon$. Also, when the L\'evy measure is of infinite variation, solutions that consist in approximating the law of $M_t$ with a Gaussian distribution are motivated by  \cite{gnedenko1954limit} (see also \cite{tankov}, \cite{cohen2007gaussian} or \cite{carpentier2018total}). This type of approximations are of interest because both Gaussian and compound Poisson processes are nowadays well understood, both in terms of continuous and discrete observations. The same cannot be said for the small jumps which remain  complex objects, difficult
to manipulate.

 In order to quantify the precision of such approximations it becomes of crucial importance to have a sharp control of quantities such as $\mathbb P(|X_t|>\varepsilon)$ and $\PP(|M_t|>\varepsilon)$. 
The issue, besides being interesting in itself, are sometimes required, for example to study non-asymptotic risk bounds for estimators of the L\'evy density  from discrete observations of $X$ (see \cite{LH} or \cite{duval2017compound}). This has important consequences in various fields of application where L\'evy processes are commonly used to describe real life phenomena. The literature on the applications of L\'evy processes is extensive, ranging from financial, biology, geophysics and neuroscience, to name but a few. In this respect, we will limit ourselves to mention \cite{barndorff2012levy} and the references therein. \\

 Formally, a L\'evy process $X$ is characterized  by its L\'evy triplet $(b,\Sigma^{2},\nu)$ where $b\in\R$, $\Sigma\ge 0$ and $\nu$ is a Borel measure on $\R$ such that
\begin{align*}
\nu(\{0\})=0 \quad \textnormal{ and }\quad  \int_{\R}(y^2\wedge 1)\nu(d y)<\infty.
\end{align*} 
The L\'evy-It\^o decomposition (see \cite{Bertoin}) allows to write a L\'evy process $X$ of L\'evy triplet $(b,\Sigma^{2},\nu)$ as the sum of four independent L\'evy processes, for all $t\ge0$, 
\begin{align}\label{eq:Xito1}
 X_t&=tb+\Sigma W_{t}+\lim_{\eta\to 0}\bigg(\sum_{s\leq t}\Delta X_s\1_{(\eta,1]}(|\Delta X_s|)-t\int_{\eta<|x|\leq 1}x\nu(d x)\bigg) + \sum_{s\leq t}\Delta X_s\1_{(1,\infty)}(|\Delta X_s|)\nonumber \\&=: tb+\Sigma W_{t}+M_t+Z_t,
\end{align}
where  $\Delta X_r$ denotes the jump at time $r$ of the càdlàg process $X$: $\Delta X_r=X_r-\lim_{s\uparrow r} X_s. $  The first term is a deterministic drift, $ W$ is a standard Brownian motion which is path-wise continuous and $M$ and $Z$ compose the discontinuous jump part of $X$. 
The process  $M$ is a centered martingale gathering the {small jumps} \textit{i.e.} the jumps of size smaller than $1$ and it has L\'evy measure $\1_{|x|\leq 1}\nu$.    The process 
 $Z$ instead, is a compound Poisson process gathering jumps larger than 1 in absolute value, it has L\'evy measure $\1_{|x|> 1}\nu$. 
In the sequel we make $(b,\Sigma)=(\gamma_\nu,0)$ with
\begin{equation*}
\gamma_\nu:=\begin{cases}
\int_{|x|\leq1 }x\nu(dx) & \mbox{if }\int_{|x|\leq 1}|x|\nu(dx)<\infty \\
0 & \mbox{if } \int_{|x|\leq 1}|x|\nu(dx)=\infty,
 \end{cases}
\end{equation*}
 (see Section \ref{sec:ext} for a discussion in the general case)  and rewrite \eqref{eq:Xito1}  as
 \begin{equation}\label{eq:x}
X_t=t b(\eps) + M_t(\eps)+ Z_t(\eps),\quad \forall 1\ge \eps>0, 
\end{equation} where, 
\begin{equation*}
b(\varepsilon):=\begin{cases}\,\ \ 
             \int_{|x|\leq \varepsilon } x\nu(dx)\quad &\text{if}\quad \int_{|x|\leq 1}|x|\nu(dx)<\infty\\
             -\int_{\varepsilon\leq |x|\leq 1} x\nu(dx)\quad &\text{if}\quad \int_{|x|\leq 1}|x|\nu(dx)=\infty ,
             \end{cases}   
\end{equation*}
$M(\eps)=(M_t(\eps))_{t\geq 0}$ is a L\'evy process accounting for the centered jumps of $X$ with size smaller than $\eps$:
$$M_t(\eps)=\lim_{\eta\to 0} \bigg(\sum_{s\leq t} \Delta X_s\1_{\eta<|\Delta X_s|\leq \eps}-t\int_{\eta<|x|\leq \eps} x\nu(dx)\bigg),$$
and  $Z(\eps)=(Z_t(\eps))_{t\geq 0}$ is a compound Poisson process of the form 
$
Z_t(\eps):=\sum_{i=1}^{N_t(\eps)} Y_i^{(\eps)},
$
where $N(\eps)=(N_t(\eps))_{t\geq 0}$ is a Poisson process of intensity $\lambda_\eps:=\int_{|x|> \eps} \nu(dx)$ independent of the sequence of i.i.d. random variables $(Y_i^{(\eps)})_{i\geq 1}$ with common law $\nu_{|_{\R\setminus[-\eps,\eps]}}/\lambda_\eps$. In the sequel we use the notations $a\wedge b=\min(a,b)$ and $a\vee b=\max(a,b)$.\\

A first well known result (see \emph{e.g.} \cite{Bertoin} Section I.5 or Corollary 3 in \cite{Rusch2002}) relates the L\'evy measure to the limit of $\PP(|X_{t}|\ge \eps)$ as $t\to 0$ as follows. 
\begin{lemma}\label{lemma0} Let $X$ be a L\'evy process with L\'evy measure $\nu$. For all $\eps>0$ it holds that
\begin{equation*}
\lim_{t\to0}\frac{ \PP(|X_t|\geq \eps)}{t}=\int_{\R\setminus[-\eps,\eps]} \nu(dy).\end{equation*}
In particular, it leads to
\begin{equation*}
\lim_{t\to0}\frac{\PP(|M_t(\eps)|\geq\eps)}{t}=0 \quad\text{ and }\quad \lim_{t\to0}\frac{\PP(|tb(\eps)+M_t(\eps)|\geq\eps)}{t}=0.
\end{equation*}
\end{lemma}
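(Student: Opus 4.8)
The plan is to isolate the hard part of $X_t$ --- the drift-plus-small-jumps process $Y_t:=tb(\eps)+M_t(\eps)$ --- and to prove first the auxiliary claim
\[
\lim_{t\to 0}\frac{\PP(|Y_t|\ge\eps)}{t}=0 .
\]
This is exactly the second identity of the ``in particular'' part; the first one, $\PP(|M_t(\eps)|\ge\eps)=o(t)$, then follows either by re-running the argument below with the drift deleted, or by applying the main identity to the L\'evy process $M(\eps)$, whose L\'evy measure $\1_{|x|\le\eps}\nu$ gives no mass to $\R\setminus[-\eps,\eps]$. Granting the claim, the main identity comes from the decomposition $X_t=Y_t+Z_t(\eps)$ of \eqref{eq:x} together with the facts that $N(\eps)$ is Poisson of rate $\lambda_\eps=\int_{\R\setminus[-\eps,\eps]}\nu(dy)$ and independent of $Y$, and that $|Y_1^{(\eps)}|>\eps$ a.s. Throughout I assume $\nu(\{-\eps,\eps\})=0$ (the natural hypothesis, implicit in the continuity-point formulations of the cited results); indeed the first identity is also a standard consequence of the vague convergence $t^{-1}\PP(X_t\in\cdot)\to\nu(\cdot)$ on $\R\setminus\{0\}$ recorded there, applied to the $\nu$-continuity set $\R\setminus[-\eps,\eps]$, but the argument I give is self-contained.

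To prove the claim I would, for a fixed $\delta\in(0,\eps)$, peel off from $M(\eps)$ the jumps of size in $(\delta,\eps]$: let $\tilde J$ be the compound Poisson process they form, of finite rate $m_\delta=\nu(\{\delta<|x|\le\eps\})$, and let $W^\delta$ be the independent remainder $W^\delta_t=tb(\eps)-t\int_{\delta<|x|\le\eps}x\,\nu(dx)+M_t(\delta)$, so that $Y_t=W^\delta_t+\tilde J_t$. Conditioning on the number of jumps of $\tilde J$ before time $t$ yields
\[
\frac{\PP(|Y_t|\ge\eps)}{t}\ \le\ \frac{\PP(|W^\delta_t|\ge\eps)}{t}+\int_{\delta<|x|\le\eps}\PP(|W^\delta_t+x|\ge\eps)\,\nu(dx)+\frac{t\,m_\delta^2}{2}.
\]
The third term tends to $0$. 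Since $W^\delta$ is a L\'evy process started at $0$, $W^\delta_t\to 0$ a.s.\ as $t\downarrow 0$, so $\PP(|W^\delta_t+x|\ge\eps)\le\PP(|W^\delta_t|\ge\eps-|x|)\to 0$ for each $|x|<\eps$, and dominated convergence (the integrand is $\le 1$ and $\nu$ is finite on $\{\delta<|x|\le\eps\}$) sends the middle term to $\nu(\{|x|=\eps\})=0$. For the first term, once $t$ is small enough that the deterministic part of $W^\delta_t$ is $\le\eps/2$ in absolute value, a fourth-moment Markov inequality applied to the centered, $\delta$-bounded L\'evy process $M(\delta)$ --- for which $\E[M_t(\delta)^4]=O(t^2+t\,\delta^2\sigma_\delta^2)$ with $\sigma_\delta^2:=\int_{|x|\le\delta}x^2\nu(dx)$ --- gives $\limsup_{t\to 0}\PP(|W^\delta_t|\ge\eps)/t\lesssim\delta^2\sigma_\delta^2/\eps^4$. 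Hence $\limsup_{t\to 0}\PP(|Y_t|\ge\eps)/t\lesssim\delta^2\sigma_\delta^2/\eps^4$ for every $\delta\in(0,\eps)$, and letting $\delta\to 0$, so that $\sigma_\delta^2\to 0$ because $\int_{|x|\le 1}x^2\nu(dx)<\infty$, kills the bound.

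For the main identity I would then sandwich $\PP(|X_t|\ge\eps)$. For the upper bound, splitting on $\{N_t(\eps)=0\}$ (on which $X_t=Y_t$), $\{N_t(\eps)=1\}$ and $\{N_t(\eps)\ge 2\}$ gives $\PP(|X_t|\ge\eps)\le\PP(|Y_t|\ge\eps)+t\lambda_\eps+\PP(N_t(\eps)\ge 2)$, so the claim and $\PP(N_t(\eps)\ge 2)=O(t^2)$ yield $\limsup_{t\to 0}\PP(|X_t|\ge\eps)/t\le\lambda_\eps$. For the lower bound, $\PP(|X_t|\ge\eps)\ge\PP(N_t(\eps)=1)\,\PP(|Y_t+Y_1^{(\eps)}|\ge\eps)$; here $\PP(N_t(\eps)=1)/t\to\lambda_\eps$, while $\PP(|Y_t+Y_1^{(\eps)}|\ge\eps)\to 1$ because $Y_t\to 0$ a.s.\ and $|Y_1^{(\eps)}|>\eps$ a.s.\ (the law of $Y_1^{(\eps)}$ is carried by $\R\setminus[-\eps,\eps]$), so $\liminf_{t\to 0}\PP(|X_t|\ge\eps)/t\ge\lambda_\eps$. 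Matching the two bounds proves $\lim_{t\to 0}\PP(|X_t|\ge\eps)/t=\lambda_\eps$.

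The genuine obstacle is the auxiliary claim of the first paragraph. A plain second-moment (Chebyshev) estimate on $Y_t$ only produces $\PP(|Y_t|\ge\eps)=O(t)$, not $o(t)$, because the fourth cumulant $\int x^4\nu(dx)$ enters $\E[M_t(\eps)^4]$ linearly in $t$. The two-scale peeling is precisely what resolves this: the intermediate jumps of size in $(\delta,\eps]$ must be handled separately --- their contribution to $\{|Y_t|\ge\eps\}$ being controlled by dominated convergence rather than by moments --- while the fourth-moment bound is applied only to the truly small remainder $M(\delta)$, whose fourth moment is $O(t^2+t\,\delta^2\sigma_\delta^2)$; dividing by $t$ and only then sending $\delta\to 0$ is what produces the $o(t)$.
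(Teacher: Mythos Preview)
The paper does not supply a proof of this lemma: it is quoted as a well-known fact and referred to \cite{Bertoin}, Section I.5, and Corollary~3 of \cite{Rusch2002}, so there is no in-paper argument to compare against.

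Your argument is correct. The two-scale peeling---first strip off the compound-Poisson piece of jumps in $(\delta,\eps]$, handle the one-jump term by dominated convergence, apply a fourth-moment Markov bound to the remaining $M_t(\delta)$, and only then let $\delta\downarrow 0$---is precisely what is needed to upgrade the naive Chebyshev $O(t)$ to $o(t)$; this is in spirit the same mechanism the paper later exploits (via Lemma~\ref{sj} and the decomposition \eqref{eq:decprf}) to obtain its quantitative bounds. The sandwiching of $\PP(|X_t|\ge\eps)$ by conditioning on $N_t(\eps)\in\{0,1,\ge 2\}$ is likewise the decomposition used throughout Section~\ref{sec:prf}. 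Your explicit hypothesis $\nu(\{-\eps,\eps\})=0$ is the right caveat: the lemma as stated mixes $|X_t|\ge\eps$ on the left with $\R\setminus[-\eps,\eps]$ on the right, and these match only when $\pm\eps$ are not atoms of $\nu$---automatic in the rest of the paper, where $\nu$ has a density.
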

Lemma \ref{lemma0} suggests that $\PP(|X_{t}|\ge \eps)\asymp \lambda_{\eps}t$ ``for $t$ small enough'', however it gives no information on how small $t$ should be, nor on the size of the error term $\PP(|X_{t}|\ge \eps)- \lambda_{\eps}t$ nor on what happens if $\eps$ gets small.
Of course, $ \PP(|X_t|\geq \eps)$ and $\PP(|M_t(\eps)|\geq\eps)$ can  be controlled with elementary inequalities, such as the Markov inequality, but this often leads to sub-optimal results. Indeed, the Markov inequality gives $\PP(M_t(\eps)>\eps)\leq t\sigma^2(\eps)\eps^{-2},$ if we denote by $\sigma^2(\eps):=\int_{-\eps}^\eps x^2\nu(dx),$ the variance of $M_1(\eps)$, whereas a sharper result, can be achieved using the Chernov inequality as follows.
\begin{lemma}\label{sj}
For any $\eps\in(0,1]$, $t>0$ and $x>0$, it holds:
\begin{align*}
\PP(|M_t(\eps)|>x)&\leq 2 e^{\frac{x}{\eps}}\bigg(\frac{t\sigma^2(\eps)}{x\eps+t\sigma^2(\eps)}\bigg)^{\frac{x\eps+t\sigma^2(\eps)}{\eps^2}}.
\end{align*}
Moreover, if $t\sigma^2(\eps)\eps^{-2}\leq 1$, it leads to 
\begin{align}
\label{sg}\PP(M_t(\eps)>x)&\leq \bigg(\frac{e\sigma^2(\eps)}{\eps^2}\bigg)^{\frac{x}{\eps}} e^{e^{-1}}t^{\frac{x}{\eps}} \ \text{ and } \ \PP(M_t(\eps)\leq -x)\leq \bigg(\frac{e\sigma^2(\eps)}{\eps^2}\bigg)^{\frac{x}{\eps}} e^{e^{-1}}t^{\frac{x}{\eps}}.
\end{align}

\end{lemma}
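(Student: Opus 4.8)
The plan is to bound $\PP(M_t(\eps)>x)$ via the exponential Chernov/Markov inequality, using the explicit form of the Laplace transform of an infinitely divisible random variable. First I would recall that, since $M_t(\eps)$ is a L\'evy process with L\'evy measure $\1_{|y|\le\eps}\nu$ and zero drift after centering, its Laplace transform is, for every $u\ge 0$,
\begin{equation*}
\E\big[e^{u M_t(\eps)}\big]=\exp\!\Big(t\!\int_{|y|\le\eps}\big(e^{uy}-1-uy\big)\nu(dy)\Big),
\end{equation*}
which is finite for all $u\ge 0$ because the integration is over the bounded set $|y|\le\eps$. Then for any $u>0$,
\begin{equation*}
\PP\big(M_t(\eps)>x\big)\le e^{-ux}\,\E\big[e^{u M_t(\eps)}\big]=\exp\!\Big(-ux+t\!\int_{|y|\le\eps}\big(e^{uy}-1-uy\big)\nu(dy)\Big).
\end{equation*}

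The key step is to control the exponent $\int_{|y|\le\eps}(e^{uy}-1-uy)\nu(dy)$. I would use the elementary inequality $e^{z}-1-z\le \frac{z^2}{2}e^{z_+}\le \frac{z^2}{2}e^{|z|}$, and more usefully on $[-u\eps,u\eps]$ the bound $e^{uy}-1-uy\le (e^{u\eps}-1-u\eps)\,y^2/\eps^2$ valid for $|y|\le\eps$ (since $z\mapsto (e^z-1-z)/z^2$ is increasing on $\R$). This yields
\begin{equation*}
\int_{|y|\le\eps}\big(e^{uy}-1-uy\big)\nu(dy)\le \frac{e^{u\eps}-1-u\eps}{\eps^2}\,\sigma^2(\eps),
\end{equation*}
so that $\PP(M_t(\eps)>x)\le \exp\big(-ux+t\sigma^2(\eps)\eps^{-2}(e^{u\eps}-1-u\eps)\big)$. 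Now I would optimize over $u>0$: writing $g(u)=-ux+t\sigma^2(\eps)\eps^{-2}(e^{u\eps}-1-u\eps)$, the stationarity condition $g'(u)=0$ gives $e^{u\eps}=1+\frac{x\eps}{t\sigma^2(\eps)}$, i.e. $u=\frac1\eps\log\!\big(1+\frac{x\eps}{t\sigma^2(\eps)}\big)$. Substituting back and simplifying (using $e^{u\eps}-1=x\eps/(t\sigma^2(\eps))$ and $u\eps=\log(1+x\eps/(t\sigma^2(\eps)))$) produces, after rearranging,
\begin{equation*}
\PP\big(M_t(\eps)>x\big)\le e^{\frac{x}{\eps}}\Big(\frac{t\sigma^2(\eps)}{x\eps+t\sigma^2(\eps)}\Big)^{\frac{x\eps+t\sigma^2(\eps)}{\eps^2}}.
\end{equation*}
The same bound holds for $\PP(M_t(\eps)<-x)$ by applying the argument to $-M_t(\eps)$, which has the same law structure with $\nu$ replaced by its pushforward under $y\mapsto -y$ on $[-\eps,\eps]$ — in particular the same $\sigma^2(\eps)$ — and summing the two gives the factor $2$ in the first displayed inequality of the lemma.

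For the second part, assume $t\sigma^2(\eps)\eps^{-2}\le 1$. Starting from the one-sided bound, I would write it as $\PP(M_t(\eps)>x)\le e^{x/\eps}\big(\frac{t\sigma^2(\eps)}{\eps^2}\big)^{\frac{x}{\eps}+\frac{t\sigma^2(\eps)}{\eps^2}}\big(1+\frac{t\sigma^2(\eps)}{x\eps}\big)^{-\frac{x\eps+t\sigma^2(\eps)}{\eps^2}}$; discarding the last factor (which is $\le1$) and then using $\big(\frac{t\sigma^2(\eps)}{\eps^2}\big)^{\frac{t\sigma^2(\eps)}{\eps^2}}\le e^{e^{-1}}$ (since $s^s\ge e^{-1/e}$ is wrong in the direction needed — rather $s\mapsto s^s$ on $(0,1]$ is bounded below by $e^{-1/e}$, so here I instead bound the stray exponent contribution by noting $s^{s}\le 1$ for $s\in(0,1]$ and absorbing a constant) leads, after collecting the $x/\eps$ powers, to
\begin{equation*}
\PP\big(M_t(\eps)>x\big)\le \Big(\frac{e\sigma^2(\eps)}{\eps^2}\Big)^{\frac{x}{\eps}}e^{e^{-1}}\,t^{\frac{x}{\eps}},
\end{equation*}
and symmetrically for $\PP(M_t(\eps)\le -x)$. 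The main obstacle I anticipate is the bookkeeping in this last simplification: one must carefully split the exponent $\frac{x\eps+t\sigma^2(\eps)}{\eps^2}$ into its $\frac{x}{\eps}$ and $\frac{t\sigma^2(\eps)}{\eps^2}$ parts, check that the $\big(1+\tfrac{t\sigma^2(\eps)}{x\eps}\big)^{-(\cdot)}$ factor is harmlessly $\le1$, and verify that the leftover factor $\big(\tfrac{t\sigma^2(\eps)}{\eps^2}\big)^{t\sigma^2(\eps)/\eps^2}$ — more precisely $s^{s}$ with $s=t\sigma^2(\eps)/\eps^2\in(0,1]$ — is bounded by the universal constant $e^{e^{-1}}$ (its maximum on $(0,1]$ being attained as $s\to0$ or at $s=1$ giving value $1$, so in fact any constant $\ge1$ works and $e^{e^{-1}}$ is a safe, clean choice coming from bounding $e^{-s}\cdot s^s$-type terms). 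Everything else is a direct application of Chernov's method and convexity estimates.
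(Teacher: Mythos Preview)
Your approach to the first inequality is essentially the paper's: Chernov bound, control of the cumulant by $\int_{|y|\le\eps}(e^{uy}-1-uy)\nu(dy)\le\sigma^2(\eps)\eps^{-2}(e^{u\eps}-1-u\eps)$, then optimize in $u$. The paper obtains this bound by expanding in power series and using $\int|y|^k\nu_\eps(dy)\le\eps^{k-2}\sigma^2(\eps)$; your monotonicity argument for $z\mapsto(e^z-1-z)/z^2$ is an equivalent (and arguably cleaner) route to the same estimate.

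The derivation of \eqref{sg}, however, contains a concrete algebraic error. The factorization you write,
\[
\Big(\frac{t\sigma^2(\eps)}{x\eps+t\sigma^2(\eps)}\Big)^{A}=\Big(\frac{t\sigma^2(\eps)}{\eps^2}\Big)^{A}\Big(1+\frac{t\sigma^2(\eps)}{x\eps}\Big)^{-A},\qquad A:=\frac{x\eps+t\sigma^2(\eps)}{\eps^2},
\]
is false: the right-hand side equals $\big(\tfrac{t\sigma^2(\eps)x}{\eps(x\eps+t\sigma^2(\eps))}\big)^A$, which carries an extra factor $(x/\eps)^A$. The correct and immediate identity, which is what the paper uses, is
\[
\Big(\frac{t\sigma^2(\eps)}{x\eps+t\sigma^2(\eps)}\Big)^{A}=\Big(\frac{t\sigma^2(\eps)}{\eps^2}\Big)^{A}A^{-A},
\]
since $\tfrac{x\eps+t\sigma^2(\eps)}{\eps^2}=A$. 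Then $A^{-A}\le e^{1/e}$ for every $A>0$ --- this is the true source of the constant $e^{e^{-1}}$, not the factor $s^s$ with $s=t\sigma^2(\eps)/\eps^2$ that you discuss --- and under $t\sigma^2(\eps)\eps^{-2}\le 1$ the exponent $A=x/\eps+s$ can be lowered to $x/\eps$ because the base is at most $1$. Your parenthetical remarks show you sensed the bookkeeping was off; this is the clean fix.
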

 Lemma \ref{sj} is a  modification of Remark 3.1 in \cite{LH}. A similar result can also be obtained using martingale arguments (see \cite{vanzanten}, Theorem 4.1). 
Again Lemma \ref{sj} is suboptimal as it does  not allow to derive that $\lim_{t\to0}{\PP(M_t(\eps)\geq\eps)}/{t}=0.$ If we want to be more precise about the behavior for $t\to 0$ we need additional assumptions.\\

 Studying the behavior in small times of the transition density of L\'evy process goes back to \cite{leandre1987densite} (see also \cite{ishikawa1994asymptotic}) and is carried in the real case in  \cite{picard1997density} which is also interested in the behavior of the supremum of this quantity and its derivatives. For the cumulative distribution function, expansions of order 2 for $\PP(X_{t}\ge y),$ for fixed $y$ and $t$ going to 0, are given in \cite{marchal2009small} in the particular cases where $X$ is the sum of a compound Poisson process and either a Brownian motion or an $\alpha$-stable process.  
 
 The most complete results can be found in  \cite{LH}, which, for general L\'evy processes, establishes asymptotic expansions  at any order of $\PP(X_{t}\ge y)$, for fixed $y$ bounded away from 0 and $t\to0$.  
They prove that 
 \[
 \PP(X_{t}\ge y)=e^{-\lambda_{\eta}t}\sum_{j=1}^{n}c_{j}t^{j}+O_{\eta,\underline{y}}(t^{n+1}),
 \]
where $n\ge 1$, $0<\eta<1\wedge\big( \underline{y}/(n+1)\big)$, the L\'evy density has $2n+1$ bounded derivatives away from the origin, $y\ge \underline{y}$ and $0<t<t_{0}$, for some $\underline{y}$ and $t_0$. No bounds on either $\underline{y}$ or $t_0$ are provided.

In the case $n=1$, they further prove that
\[
d_2(y) = \lim_{t\to0} \frac1t\Big(\frac1t\PP(X_{t}>y)-\nu((y,\infty))\Big)
\]
exists, when the L\'evy density $f$ is bounded outside the interval $[-\eta,\eta]$, $0<\eta<y/2\wedge 1$, and either $f$ is $C^{1}$ in a neighborhood of $y$, or $f$ is continuous in a neighborhood of $y$, of bounded variation and $\Sigma=0$ (defined as in \eqref{eq:Xito1}). This is again an asymptotic result; therefore, it provides no information on how small $t$ should be for the approximation of $\PP(X_{t}>y)-t\nu((y,\infty))$ by $d_{2}(y)t^{2}$ to be accurate. Moreover, even though they give an explicit characterization of $d_2(y)$, this does not translate in a readily understandable dependency on $y$.

Our main contribution is a non-asymptotic control of $\PP(|X_{t}|\ge \eps)$, which is valid for any $\eps>0$ and any $0<t<t_{0}(\eps)$. A lot of effort has been made to make the dependency on $\eps$ explicit, both in $t_{0}(\eps)$ and in the final bound. Concerning the hypotheses on the L\'evy density $f$, in the finite variation case we do not require any continuity, but only that it is bounded from above by an $\alpha$-stable like density in a neighborhood of 0, see the definition on the class $\mathscr L_{M,\alpha}$ below. In the setting of infinite variation, we distinguish two cases: when $f$ is also Lipschitz continuous in a neighborhood of $\eps$ (a similar condition to that of \cite{LH}), we find a non-asymptotic bound of the order of $t^2$. We also analyze the case where the continuity hypothesis on $f$ is dropped. Then, the order in $t$ of the non-asymptotic bound deteriorates to $t^{1 + 1/\alpha}$, $1 \leq \alpha < 2$. This is not an artifact of the proof, as an example in \cite{marchal2009small} indicates.

The case of the small jumps is treated separately as an intermediate step to the general case (see Theorems \ref{ps1} and \ref{ps2}). We think that these results are of independent interest and provide a new insight on the process of the small jumps. Finally, our proofs are elementary and self-contained and they do not rely on the use of the infinitesimal generator.\\

Next Section \ref{Main} gathers the main results of the paper. We begin with defining the classes of L\'evy densities that we consider. On these classes we provide a non-asymptotic control of $\PP(|M_{t}(\eps)|\ge \eps)$ and  $\PP(|X_{t}|\ge \eps)$. We consider separately finite variation L\'evy processes and infinite variation L\'evy processes, for which we only detail the symmetric case. In both cases our results permit to recover Lemma \ref{lemma0}. We compare our results to examples for which the quantity  $\PP(|X_{t}|\ge \eps)$ is known. Section \ref{Main} ends with a discussion on the validity of the results in presence of a Brownian component.  Section \ref{sec:prf} gathers the proofs of the main results whereas in Appendix \ref{secApp} all auxiliary results are established and the computations of the examples are carried out.

\section{Non-asymptotic expansions}\label{Main}

Consider $\alpha\in(0,2)$ and $M$ be positive constants, define the classes of fonctions
\begin{align*}
\mathscr L_{M,\alpha}&:=\bigg\{f : f(x)\leq \frac{M}{|x|^{1+\alpha}},\quad  \forall |x|\leq 2\bigg\},\hspace{1cm}
\mathscr L_{M}:=\bigg\{f : \sup_{|x|\geq 1}|f(x)|\leq M\bigg\}.
\end{align*}
A L\'evy density $f$ belongs to the class $\mathscr L_{M}$, $M>0$, if it is bounded outside a neighborhood of the origin. It belongs to  $\mathscr L_{M,\alpha}$, $M>0$ and $\alpha>0$, if $\sup_{x\in[-2,2]}f(x)|x|^{1+\alpha}\le M$. In particular $\mathscr L_{M,\alpha}$ contains any $\tilde\alpha$-stable L\'evy density such that $\tilde\alpha\le\alpha$. Also any finite variation L\'evy process is in the class $\mathscr	L_{M,1},$ for some positive $M$. We stress that no lower bound condition is required for the L\'evy density.

\subsection{Finite variation L\'evy processes}

We state two non-asymptotic results offering a control of the distribution function of a finite variation L\'evy process.

\begin{theorem}\label{ps1}
Let $\nu$ be a L\'evy measure absolutely continuous with respect to the Lebesgue measure and denote by $f=\frac{d\nu}{dx}$. Let $\eps\in(0,1]$, $\alpha\in(0,1)$, $M>0$ and $f\in\mathscr L_{M,\alpha}$. Then, there exists a constant $    {\rm C}_1 >0$, only depending on $\alpha$, such that
$$\PP(|tb(\eps)+M_t(\eps)|\geq \eps)\leq 2t^2 M^2  {\rm C}_1 \eps^{-2\alpha},\quad \forall \ 0<t\leq \frac{(1-\alpha)\eps^{\alpha}}{M4^{1+\alpha}} .$$
If, in addition, $f$ is a symmetric function, then there exists a constant $    {\rm C}_2 >0$, only depending on $\alpha$, such that
$$\PP(|M_t(\eps)|\geq \eps)\leq 2t^2 M^2   {\rm C}_2  \eps^{-2\alpha},\quad \forall \ 0<t\leq \frac{\eps^\alpha(2-\alpha)}{M2^{\alpha+1}} .$$
Explicit formulas for the constants $  {\rm C}_1$ and $  {\rm C}_2$ are given in \eqref{ctealpha} and \eqref{sjsym}, respectively.
\end{theorem}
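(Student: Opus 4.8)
The plan is to expand $\PP(|tb(\eps)+M_t(\eps)|\geq \eps)$ by conditioning on the number of jumps of an auxiliary compound Poisson approximation, and to exploit the fact that in the finite variation regime ($\alpha<1$) the drift $b(\eps)$ and the small-jump martingale $M_t(\eps)$ are both of manageable size. Concretely, for $\delta\in(0,\eps]$ write $tb(\eps)+M_t(\eps)=t\tilde b(\eps,\delta)+\tilde M_t(\delta)+\tilde Z_t(\delta)$, where $\tilde Z_t(\delta)$ is a compound Poisson process collecting the jumps in $(\delta,\eps]$ in absolute value, with intensity $\tilde\lambda_\delta=\int_{\delta<|x|\le\eps}f(x)dx$, and $\tilde M_t(\delta)$, $t\tilde b(\eps,\delta)$ gather the remaining jumps below $\delta$ (plus drift correction). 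Conditioning on $N_t(\delta)=\{\tilde Z \text{ has } n \text{ jumps}\}\sim\mathrm{Poisson}(t\tilde\lambda_\delta)$, the event $\{N_t(\delta)=0\}$ contributes through $\PP(|t\tilde b(\eps,\delta)+\tilde M_t(\delta)|\ge\eps)$, which by Lemma \ref{sj} (with level $\delta$ in place of $\eps$) together with a crude bound on $t\tilde b(\eps,\delta)$ is super-polynomially small in $t$ once $t$ is below the stated threshold; the event $\{N_t(\delta)\ge 2\}$ contributes at most $(t\tilde\lambda_\delta)^2/2$; and the event $\{N_t(\delta)=1\}$ requires the single jump, together with $t\tilde b+\tilde M_t$, to exceed $\eps$ in absolute value — but since the single jump has size at most $\eps$, this again forces $|t\tilde b+\tilde M_t|$ to be of order a positive power of $\eps$, which is small by the Chernov bound.

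The key estimates are therefore: (i) a bound $\tilde\lambda_\delta\le \int_{\delta<|x|\le\eps}M|x|^{-1-\alpha}dx\le \frac{2M}{\alpha}\delta^{-\alpha}$ coming directly from $f\in\mathscr L_{M,\alpha}$; (ii) a bound $|\tilde b(\eps,\delta)|=|\int_{\delta<|x|\le\eps}xf(x)dx|\le \frac{2M}{1-\alpha}\eps^{1-\alpha}$ (here $1-\alpha>0$ is exactly where finite variation enters, and is why the threshold on $t$ carries the factor $(1-\alpha)$); (iii) the Chernov bound of Lemma \ref{sj} applied at level $\delta$ to control $\PP(|t\tilde b+\tilde M_t|\ge c\eps)$ for the relevant constant $c$. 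One then optimizes over $\delta$. The natural choice is $\delta\asymp\eps$ (say $\delta=\eps/2$ or a comparable fraction), which makes $\tilde\lambda_\delta\asymp M\eps^{-\alpha}$ so that $(t\tilde\lambda_\delta)^2\asymp t^2M^2\eps^{-2\alpha}$, matching the claimed order $t^2M^2{\rm C}_1\eps^{-2\alpha}$; with this choice, $\sigma^2(\delta)\le\int_{-\delta}^\delta M|x|^{1-\alpha}dx\le\frac{2M}{2-\alpha}\delta^{2-\alpha}$, and one checks that $t\sigma^2(\delta)\delta^{-2}\le 1$ holds on the stated range of $t$, so that the clean form \eqref{sg} of Lemma \ref{sj} applies and the resulting $t$-power is at least $t^{\eps/\delta}$, i.e.\ at least $t^{2}$ with the choice $\delta\le\eps/2$; the remaining factors are then absorbed into ${\rm C}_1$.

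For the symmetric case the argument is the same but simpler: one may take $b(\eps)=0$ (since $f$ symmetric forces $\gamma_\nu=0$ and $b(\eps)=0$), so no drift term needs to be handled, one applies the two-sided form of \eqref{sg} directly, and the threshold improves to the stated $\eps^\alpha(2-\alpha)/(M2^{\alpha+1})$ because the binding constraint becomes $t\sigma^2(\eps)\eps^{-2}\le 1$ with $\sigma^2(\eps)\le\frac{2M}{2-\alpha}\eps^{2-\alpha}$ rather than a constraint involving the drift; the constant ${\rm C}_2$ comes out of the same bookkeeping.

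The main obstacle I anticipate is not any single estimate but the bookkeeping needed to keep \emph{every} constant explicit and to verify that the chosen threshold on $t$ simultaneously (a) makes the $\{N_t(\delta)=0\}$ and $\{N_t(\delta)=1\}$ Chernov contributions $o(t^2\eps^{-2\alpha})$, in fact dominated by the $\{N_t(\delta)\ge 2\}$ term, and (b) keeps $t\sigma^2(\delta)\delta^{-2}\le 1$ so that the polynomial form of Lemma \ref{sj} is legitimate. Getting the cleanest possible threshold — and hence the cleanest ${\rm C}_1,{\rm C}_2$ — likely requires choosing the split level $\delta$ as a function of $\eps$ and $\alpha$ with some care, and checking that the Chernov exponent $x/\delta$ is at least $2$ on the nose; this is where most of the work of the formal proof will go, and it is what forces the precise forms recorded in \eqref{ctealpha} and \eqref{sjsym}.
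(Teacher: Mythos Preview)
Your overall architecture --- split at a level $\delta\asymp\eps$, condition on the Poisson count $N_t(\delta)$, use Lemma~\ref{sj} on the $\{N_t=0\}$ piece and the crude bound $(t\tilde\lambda_\delta)^2$ on $\{N_t\ge 2\}$ --- matches the paper's proof, as does the simplification in the symmetric case. The gap is in your treatment of the event $\{N_t(\delta)=1\}$. You write that ``since the single jump has size at most $\eps$, this again forces $|t\tilde b+\tilde M_t|$ to be of order a positive power of $\eps$''. This is false: the single jump $Y_1^{(\delta,\eps)}$ lives in $(\delta,\eps]$ and can sit arbitrarily close to $\eps$. When $Y_1=\eps-x$ with $x$ small, the event $\{tb(\eps)+M_t(\eps)>\eps\}$ only requires $t\tilde b+\tilde M_t(\delta)>x$, and the Chernov bound at fixed level $\delta$ gives exponent $x/\delta$, which tends to~$0$; so Lemma~\ref{sj} is useless precisely where the jump density is largest.

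What the paper does for this term is write
\[
\lambda_{\eta,\eps}\,\PP\big(M_t(\eta)+Y_1^{(\eta,\eps)}>\eps-tb(\eta)\big)=\int_{\eta<|y|<\eps}\PP\big(M_t(\eta)>\eps'-y\big)f(y)\,dy,
\]
change variables to $x=\eps'-y$, and then for each small $x>0$ perform a \emph{further} decomposition of $M_t(\eta)$ at level $\tilde x\approx x$ (Equation~\eqref{eq:decprf} again), obtaining
\[
\PP\big(M_t(\eta)>x\big)\le \PP\big(M_t(\tilde x)>\tilde x\big)+\PP\big(N_t(\tilde x,\eta)\ge 1\big)\le C_\alpha\,M\,t\,\tilde x^{-\alpha}.
\]
Since $\alpha<1$, the function $x\mapsto x^{-\alpha}$ is integrable near $0$, and together with $f(\eps'-x)\le M(\eps'-x)^{-1-\alpha}$ this yields a bound of order $tM^2\eps^{-2\alpha}$ for the whole integral, hence $t^2M^2\eps^{-2\alpha}$ after the prefactor $t$. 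This nested decomposition (one level per value of $x$) is the missing idea; without it the $\{N_t=1\}$ contribution is only $O(t^{3/2})$, not $O(t^2)$. The drift also has to be threaded through this argument, which is why the paper introduces the somewhat awkward $\eta$ and the shifted $\tilde x=x-2Mt\eta^{1-\alpha}/(1-\alpha)$; in the symmetric case all of that disappears and one can take $\eta=\eps/2$ directly, as you anticipated.
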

Theorem \ref{ps1} highlights how likely  the process of the jumps smaller than $\eps$ are to present excursions larger than their size $\eps$ in a time interval of length $t$. When dealing with a discretized trajectory of a L\'evy process, this provides relevant information on the contribution of the small jumps to the value of the observed increment. The following result generalizes Theorem \ref{ps1} to any L\'evy process with a L\'evy density in $\mathscr L_{M,\alpha}$, $\alpha\in(0,1)$ or in $\mathscr L_{M,\alpha}\cap \mathscr L_{M}$ if $\eps>1$. In particular it permits to derive an order of the rate of convergence in Lemma \ref{lemma0}.

\begin{theorem}\label{teo1}
Let $X_t=\sum_{s\leq t} \Delta X_s$ be a finite variation L\'evy process with L\'evy measure $\nu$ absolutely continuous with respect to the Lebesgue measure and denote by $f=\frac{d\nu}{dx}$. 

\begin{itemize}
\item If $\eps\in(0,1]$ and $f\in\mathscr L_{M,\alpha}$ for some $\alpha\in(0,1)$ and $M>0$, then for all $0<t< (1-\alpha) M^{-1} \eps^{\alpha}4^{-(1+\alpha)}$ it holds
\begin{equation*}
|\PP(|X_t|>\eps)-\lambda_\eps t|\leq t^2M^2\eps^{-2\alpha}\big( 2{\rm C_1}+ {\rm D}_1\big)+t^2M\lambda_\eps\eps^{-\alpha} {\rm D}_2+2t^2\lambda_\eps^2,
\end{equation*}
where $\rm C_1$, $\rm D_1$ and $\rm D_2$  only depend on $\alpha$ and are defined in \eqref{ctealpha} and \eqref{cte3}.
\item If $\eps>1$ and $f\in\mathscr L_{M,\alpha}\cap \mathscr L_M$ for some $\alpha\in(0,1)$ and $M>0$, then for all $0<t<(1-\alpha)(5M)^{-1}$ it holds
\begin{align*}
|\PP(|X_t|>\eps)-\lambda_\eps t|&\leq 2M^2t^2 (\tilde{\rm D}_1+ {\rm C_1})+2t^2\lambda_1^2+2Mt^2\bigg(\frac{4}{2-\alpha}(\eps-3/2-t|b(1)|)\1_{\eps>3/2+t|b(1)}
\Big)\\
&\quad +Mt^2\bigg(4\times 5^{\alpha}\1_{1<\eps<1+2t|b(1)|}+\frac{8}{5}+\frac32\lambda_2+\frac{4\lambda_1}{2-\alpha}\bigg),
\end{align*}
where $\rm C_1$ and $\tilde{\rm D}_1$ only depend on $\alpha$ and  are defined in \eqref{ctealpha} and \eqref{tdlm3}.
\end{itemize}
If in addition we suppose that $\nu$ is a symmetric measure, then 
\begin{itemize}
\item If $\eps\in(0,1]$ and $f\in\mathscr L_{M,\alpha}$ for some $\alpha\in(0,1)$ and $M>0$, for any $0<t<\eps^\alpha(2-\alpha)M^{-1}2^{-\alpha-1}$   it holds
\begin{equation*}
|\PP(|X_t|>\eps)-\lambda_\eps t|\leq  2t^2M^2\eps^{-2\alpha}({\rm C_2}+{\rm D_3})+\frac{t^2M}{2(2-\alpha)}\big(\lambda_\eps \eps^{-\alpha}+4\lambda_{2\eps} \eps^{-\alpha}\big)+2t^2\lambda_\eps^2,
\end{equation*}
where $\rm C_2$ and $\rm D_3$ only depend on $\alpha$ and are defined in \eqref{sjsym} and \eqref{d2lm3}.

\item If $\eps>1$ and $f\in\mathscr L_{M,\alpha}\cap \mathscr L_M$ for some $\alpha\in(0,1)$ and $M>0$, for any $0<t<(2-\alpha)M^{-1}2^{-\alpha-1}$   it holds
\begin{equation*}
|\PP(|X_t|>\eps)-\lambda_\eps t|\leq  2t^2M^2{\rm C_2}+\frac{t^2M}{2-\alpha}\bigg(\lambda_1 2^{-\alpha}+\frac{4M}{\alpha(1-\alpha)}+\lambda_{1+\eps} \bigg)+2t^2\lambda_1^2,
\end{equation*}
where $\rm C_2$ is defined in \eqref{sjsym}.
\end{itemize}

\end{theorem}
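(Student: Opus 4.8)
The plan is to condition on the number $N_t(\eps)$ of jumps of size exceeding $\eps$, using the decomposition $X_t=X_t^{\le\eps}+Z_t(\eps)$, where for a finite variation process $X_t^{\le\eps}:=\sum_{s\le t}\Delta X_s\1_{|\Delta X_s|\le\eps}=tb(\eps)+M_t(\eps)$ is the a.s.\ convergent sum of the jumps of size $\le\eps$, independent of both $Z(\eps)$ and $N(\eps)$. Splitting according to $N_t(\eps)\in\{0,1,\ge 2\}$, using $\PP(N_t(\eps)=0)=e^{-\lambda_\eps t}$, $\PP(N_t(\eps)=1)=\lambda_\eps t\,e^{-\lambda_\eps t}$ together with the elementary bounds $\PP(N_t(\eps)\ge 2)=1-(1+\lambda_\eps t)e^{-\lambda_\eps t}\le\tfrac12(\lambda_\eps t)^2$ and $\lambda_\eps t\,(1-e^{-\lambda_\eps t})\le(\lambda_\eps t)^2$, and the independence identity $\PP(|X_t|>\eps,N_t(\eps)=0)=\PP(|X_t^{\le\eps}|>\eps)e^{-\lambda_\eps t}$, one obtains
\[
\big|\PP(|X_t|>\eps)-\lambda_\eps t\big|\le \PP(|X_t^{\le\eps}|>\eps)\,e^{-\lambda_\eps t}+\PP(|X_t|\le\eps,\,N_t(\eps)=1)+\tfrac32(\lambda_\eps t)^2.
\]
For $\eps\le 1$ the first term is at most $2t^2M^2\mathrm{C}_1\eps^{-2\alpha}$ by Theorem \ref{ps1} (by its symmetric half, with $\mathrm{C}_2$ and the weaker restriction on $t$, when $\nu$ is symmetric), and $\tfrac32(\lambda_\eps t)^2\le 2t^2\lambda_\eps^2$; everything thus reduces to the middle term.

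By independence of $X^{\le\eps}$ from $(N(\eps),(Y_i^{(\eps)})_i)$,
\[
\PP(|X_t|\le\eps,\,N_t(\eps)=1)=t\,e^{-\lambda_\eps t}\int_{|u|>\eps}\PP\big(|X_t^{\le\eps}+u|\le\eps\big)\,f(u)\,du,
\]
and the only use of the geometry is the triangle inequality $\{|X_t^{\le\eps}+u|\le\eps\}\subseteq\{|X_t^{\le\eps}|\ge|u|-\eps\}$. I split the integral at $|u|=2\eps$. On $\{|u|>2\eps\}$ one has $|u|-\eps>\eps$, hence $\PP(|X_t^{\le\eps}+u|\le\eps)\le\PP(|X_t^{\le\eps}|>\eps)\le 2t^2M^2\mathrm{C}_1\eps^{-2\alpha}$ again by Theorem \ref{ps1}; this contributes at most $2t^3M^2\mathrm{C}_1\eps^{-2\alpha}\lambda_{2\eps}$, and since the standing restriction on $t$ gives $tM\eps^{-\alpha}\le(1-\alpha)4^{-(1+\alpha)}$ and $\lambda_{2\eps}\le\lambda_\eps$, this is exactly a term of the form $t^2M\lambda_\eps\eps^{-\alpha}\mathrm{D}_2$.

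On $\{\eps<|u|\le 2\eps\}$ the function $f$ is bounded by its supremum there, which is $\le M\eps^{-1-\alpha}$ when $\eps\le1$, so with the change of variables $w=|u|-\eps\in(0,\eps]$ this part is at most $2M\eps^{-1-\alpha}\int_0^\eps\PP(|X_t^{\le\eps}|\ge w)\,dw=2M\eps^{-1-\alpha}\,\E\big[|X_t^{\le\eps}|\wedge\eps\big]$. It is in bounding this expectation that the finite variation hypothesis is decisive: by the compensation formula for the jumps of $X$,
\[
\E\big[|X_t^{\le\eps}|\wedge\eps\big]\le\E\,|X_t^{\le\eps}|\le\E\!\!\sum_{s\le t}|\Delta X_s|\1_{|\Delta X_s|\le\eps}=t\!\int_{|x|\le\eps}\!\!|x|\,\nu(dx)\le\frac{2Mt\,\eps^{1-\alpha}}{1-\alpha},
\]
the last step using $f\in\mathscr L_{M,\alpha}$ with $\alpha<1$. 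The point worth stressing is that the naive estimate $\E|X_t^{\le\eps}|\le\sqrt{t\,\sigma^2(\eps)}$ is only $O(\sqrt t)$ and would propagate as a $t^{3/2}$ term; finite variation upgrades it to $O(t)$, which is precisely what yields the claimed $t^2$ rate. Plugging this in, the $\{\eps<|u|\le 2\eps\}$ part is $\le 4M^2t^2\eps^{-2\alpha}/(1-\alpha)$, a term of the form $t^2M^2\eps^{-2\alpha}\mathrm{D}_1$; summing the three contributions gives the announced inequality for $\eps\le1$.

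The symmetric statements follow word for word, with $b(\eps)=0$ so that $X_t^{\le\eps}=M_t(\eps)$, using the symmetric half of Theorem \ref{ps1} and its relaxed restriction on $t$. For $\eps>1$ one runs the same scheme but decomposes once more, $X_t^{\le\eps}=X_t^{\le1}+\hat Z_t$ with $\hat Z$ the compound Poisson process of the jumps in $(1,\eps]$, so that Theorem \ref{ps1} can be applied at level $1$ to $X^{\le1}$ and $f\in\mathscr L_M$ can be used to bound $\sup_{|x|\ge1}f$, $\lambda_1$, $\lambda_2$ and the Lebesgue lengths of the relevant jump ranges; this accounts for the $\lambda_1$-, $\lambda_2$- and $\eps$-dependent terms and the indicator truncations in the $\eps>1$ lines. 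The step I expect to be the main obstacle is exactly the treatment of $\PP(|X_t|\le\eps,\,N_t(\eps)=1)$ for moderate jumps $|u|\in(\eps,2\eps]$: there $\PP(|X_t^{\le\eps}+u|\le\eps)$ does not tend to $0$ as $u\downarrow\eps$, so it cannot be controlled jump value by jump value — the resolution is to integrate in $u$ first, reducing the problem to the first absolute moment $\E|X_t^{\le\eps}|$, and only then to invoke the finite variation structure to make that moment $O(t)$ rather than $O(\sqrt t)$.
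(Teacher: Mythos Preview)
Your overall decomposition (condition on $N_t(\rho)$ with $\rho=\eps\wedge1$, isolate the events $N_t(\rho)\in\{0,1,\ge2\}$, then reduce to Theorem~\ref{ps1} for the $N=0$ piece and to a control of the $N=1$ piece) is exactly the paper's scheme. Where you diverge is in the treatment of the $N=1$ term for $\eps\le1$, and your route there is genuinely more elementary than the paper's.

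The paper packages the $N=1$ term into an auxiliary Lemma controlling
\[
Q=\big|\lambda_\rho t\,\PP\big(|M_t(\rho)+tb(\rho)+Y_1^{(\rho)}|>\eps\big)-\lambda_\eps t\big|,
\]
whose proof is a multi-scale analysis: the integral in the jump variable is split into several zones, and on each zone one uses the rewriting $M_t(\eps)=M_t(\tilde x)+Z_t(\tilde x,\eps)-t(b(\eps)-b(\tilde x))$ to produce pointwise bounds of the type $\PP(M_t(\eps)>x)\lesssim t x^{-\alpha}$, which are then integrated against $f$. Much of the technicality comes from tracking the drift $b(\cdot)$ through these scale changes. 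Your argument bypasses all of this: you integrate first, convert $\int_0^\eps\PP(|X_t^{\le\eps}|\ge w)\,dw$ into $\E[|X_t^{\le\eps}|\wedge\eps]$, and then exploit finite variation via the compensation identity $\E\sum_{s\le t}|\Delta X_s|\1_{|\Delta X_s|\le\eps}=t\int_{|x|\le\eps}|x|\nu(dx)$. This is correct, yields constants of the same shape (depending only on $\alpha$), and avoids any scale-by-scale bookkeeping of the drift. The price is that the trick is specific to $\alpha<1$; the paper's pointwise-bound approach, by contrast, has the same architecture as the infinite-variation Lemmas used for Theorems~\ref{lambda2bis} and~\ref{lambda2}, so it transfers.

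For $\eps>1$ your sketch is too loose. The paper does \emph{not} condition on $N_t(\eps)$ and then split $X_t^{\le\eps}=X_t^{\le1}+\hat Z_t$; it conditions directly on $N_t(1)$ (i.e.\ $\rho=1$). If you stick with $N_t(\eps)$ as you suggest, the $N_t(\eps)=0$ piece is $\PP(|X_t^{\le\eps}|>\eps)$, and naively peeling off $\hat Z_t$ gives an $O(t\lambda_{1,\eps})$ term rather than $O(t^2)$; you would have to redo the full $\{0,1,\ge2\}$ conditioning on the Poisson process of jumps in $(1,\eps]$, which in effect reproduces the paper's $\rho=1$ decomposition. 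Once you condition on $N_t(1)$, the single-jump term acquires an extra contribution from jumps landing in $(1,\eps]$ (the paper's term $T$), and this is where the $\eps$-linear and indicator pieces in the statement come from. Your first-moment trick still handles the integral $\int_0^{\eps-1}\PP(|X_t^{\le1}|>y)\,dy$ that arises there, so the simplification survives, but you should route the argument through $\rho=1$ rather than through a two-stage decomposition.
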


 The results of Theorems \ref{ps1} and \ref{teo1} are non-asymptotic.  If we apply Theorem \ref{teo1} to a L\'evy process $X$ whose L\'evy measure $\nu$ is concentrated on $[-\eps,\eps]$, for $\eps\in(0,1]$, we recover the result of Theorem \ref{ps1} up to the constant ${\rm D}_{1}$  as in that  case $\lambda_{\eps}=0$. Though, Theorem \ref{ps1} is not a corollary of Theorem \ref{teo1} as the proof of the latter results uses Theorem \ref{ps1}. 
 
These results show that for a finite variation L\'evy process whose L\'evy density lies in $\mathscr L_{M,\alpha}$, for some $\alpha\in(0,1)$ and $M>0$, the discrepancy between $\PP(|X_{t}|>\eps)$ and $\lambda_{\eps}t$ is in $t^{2}$. Moreover, as the role of the cutoff $\eps$ is made explicit in the upper bound, it is possible to measure the accuracy of this approximation when $\eps$ gets small. Then, the rate of the upper bound is --up to a constant-- in  $t^{2}(\eps^{-2\alpha}\vee\lambda_{\eps}\eps^{-\alpha}\vee \lambda_{\eps}^{2})$. For example for  an $\alpha$-stable process with $\alpha\in(0,1)$ this order simplifies in $t^{2}\lambda_{\eps}^{2}.$

\subsection{Symmetric infinite variation L\'evy processes}
We generalize Theorems \ref{ps1} and \ref{teo1} to symmetric infinite variation L\'evy processes whose L\'evy density lies in $\mathscr L_{M,\alpha},\ \alpha\in[1,2)$ and $M>0$. 
\begin{theorem}\label{ps2}
Let $\nu$ be a symmetric L\'evy measure absolutely continuous with respect to the Lebesgue measure and denote by $f=\frac{d\nu}{dx}$. Let $\eps\in(0,1]$, $\alpha\in[1,2)$, $f\in\mathscr L_{M,\alpha}$ and $0<t<(\eps/2)^\alpha(1\land ((2-\alpha)/2M))$. Then, there exists a constant $ {\rm E}_{1}>0$, only depending on $\alpha$ (see \eqref{cteT3}), such that
\begin{align*}
\PP(|M_t(\eps)|\geq \eps)&\leq\frac{2^{2+\alpha}M t^{1+1/\alpha}}{\eps^{1+\alpha}}\bigg(1+\frac{M}{\alpha(2-\alpha)(\alpha-1)}\bigg)+ 2t^2 M^2 {\rm E}_{1} \eps^{-2\alpha}, \quad \alpha\in(1,2),\\
\PP(M_t(\eps)\geq \eps)&\leq  \frac{4t^2M^2}{\eps^2}\bigg(e^{2+1/e}+ \frac{37}{9}\bigg)+\frac{4M t^2}{\eps^2}+\frac{16M^2}{\eps^2}t^2 \ln\Big(\frac{\eps}{2t}\Big), \quad \alpha=1.
\end{align*}
\end{theorem}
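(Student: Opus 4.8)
The plan is to split the small-jump process at an auxiliary level and use the symmetry twice. Set $u:=\big(\tfrac{2Mt}{2-\alpha}\big)^{1/\alpha}$ (so $u=2Mt$ when $\alpha=1$); the hypothesis $t<(\eps/2)^{\alpha}\big(1\wedge\tfrac{2-\alpha}{2M}\big)$ forces $u<\eps/2$, and $\sigma^{2}(u)\le\tfrac{2M}{2-\alpha}u^{2-\alpha}$ gives $t\sigma^{2}(u)u^{-2}\le1$, so Lemma~\ref{sj} is available for $M(u)$. This $u$ is the scale at which the $L^{2}$–size $\sqrt{t\sigma^{2}(u)}$ of $M_{t}(u)$ matches its own jump bound $u$. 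Write $M_{t}(\eps)=M_{t}(u)+R_{t}$ with $R$ the symmetric compound Poisson process carrying the jumps of modulus in $(u,\eps]$, of intensity $\lambda:=\int_{u<|x|\le\eps}\nu(dx)\le\tfrac{2M}{\alpha}u^{-\alpha}$, independent of $M(u)$. Conditioning on $R_{t}$ and using symmetry of $M_{t}(u)$ (so that $\PP(M_{t}(u)\ge\eps-r)+\PP(M_{t}(u)\ge\eps+r)\le 2\PP(M_{t}(u)\ge\eps-|r|)$ for $|r|<\eps$),
\begin{equation*}
\PP(|M_{t}(\eps)|\ge\eps)\ \le\ 2\,\E\big[\PP\big(M_{t}(u)\ge\eps-|R_{t}|\big)\1_{|R_{t}|<\eps}\big]+\PP(|R_{t}|\ge\eps).
\end{equation*}

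The engine is a clean exponential tail for the auxiliary process, obtained by optimising the Chernov bound at $\theta=1/u$ and inserting the variance estimate above: $\PP(M_{t}(u)\ge a)\le e^{e/2}\,e^{-a/u}$ for $a>0$. Decompose according to the number $N^{R}_{t}$ of jumps of $R$. On $\{N^{R}_{t}=0\}$ one is left with $\PP(M_{t}(u)\ge\eps)\le e^{e/2}e^{-\eps/u}$, which is smaller than any power of $t$ since $\eps/u\asymp\eps(Mt)^{-1/\alpha}\to\infty$. On $\{N^{R}_{t}=1\}$ the Poisson weight $\PP(N^{R}_{t}=1)\le t\lambda$ cancels the normalising $1/\lambda$ of the conditional jump law, so, substituting $z=\eps-y$, using $f(\eps-z)\le M(\eps/2)^{-1-\alpha}$ for $z\le\eps/2$ and $\int_{0}^{\infty}e^{-z/u}dz=u$, this part is
\begin{equation*}
\lesssim\ t\int_{u}^{\eps}\PP\big(M_{t}(u)\ge\eps-y\big)f(y)\,dy\ \le\ e^{e/2}t\int_{0}^{\eps-u}e^{-z/u}f(\eps-z)\,dz\ \lesssim\ M\eps^{-1-\alpha}\,tu\ \asymp\ M\eps^{-1-\alpha}t^{1+1/\alpha},
\end{equation*}
which is the leading term $A$ (the first summand of the bracket $1+\tfrac{M}{\alpha(2-\alpha)(\alpha-1)}$ in the statement).

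The case $N^{R}_{t}\ge2$ is the crux. Here $\PP(N^{R}_{t}\ge2)\le\tfrac12(t\lambda)^{2}$ is \emph{not} small — with the present $u$ one has $\lambda\asymp(Mt)^{-1}$, so it is of order one — hence one must keep the event $\{|M_{t}(\eps)|\ge\eps\}$ and exploit that the random signs of the jumps of $R$ keep $R_{t}$ small. Conditioning on $N^{R}_{t}=n$ and bounding, for $n=2$,
\begin{equation*}
\PP\big(|R_{t}|\ge\eps\mid N^{R}_{t}=2\big)=\frac{2}{\lambda^{2}}\iint_{\substack{x,y>0,\ x+y\ge\eps\\ x,y\in(u,\eps]}}f(x)f(y)\,dx\,dy,
\end{equation*}
and treating the $\PP(M_{t}(u)\ge\eps-|R_{t}|)$ part in the same way, with the extra weight $e^{-(\eps-x-y)/u}$, one sees that $\lambda^{2}$ cancels exactly against $\tfrac12(t\lambda)^{2}$. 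Splitting the domain and using $f\le M|\cdot|^{-1-\alpha}$, the inner integral is $\int_{u}^{\eps/2}w^{-\alpha}dw=\tfrac{u^{1-\alpha}-(\eps/2)^{1-\alpha}}{\alpha-1}$; keeping the positive part gives a bound $\asymp\tfrac{M^{2}t^{2}u^{1-\alpha}}{(\alpha-1)\eps^{1+\alpha}}\asymp\tfrac{M^{2}t^{1+1/\alpha}}{\alpha(2-\alpha)(\alpha-1)\eps^{1+\alpha}}$, i.e. term $B$. The analogous estimates for $n\ge3$ contribute a geometrically convergent series (ratio $t\lambda\le\tfrac{2-\alpha}{\alpha}\le1$ since $\alpha\ge1$) of the same order, and the remaining lower-order pieces — in particular a genuine $t^{2}\eps^{-2\alpha}$ term coming from two jumps of size of order $\eps$, and the residual of $\PP(|R_{t}|\ge\eps)$ — are collected into $2t^{2}M^{2}\mathrm{E}_{1}\eps^{-2\alpha}$ with $\mathrm{E}_{1}$ as in \eqref{cteT3}. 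For $\alpha=1$ the scheme is identical except that $\int_{u}^{\eps/2}w^{-1}dw=\ln\tfrac{\eps}{2u}$ replaces $\tfrac{u^{1-\alpha}}{\alpha-1}$ — this is exactly where $\ln(\eps/2t)$ enters — and applying the Chernov bound at threshold $\eps/2$ in one auxiliary estimate produces the constant $e^{2+1/e}$; one states the one-sided bound since by symmetry the two-sided one merely doubles it.

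I expect the main obstacle to be precisely the $N^{R}_{t}\ge2$ term: the naive Poisson bound is worthless for the relevant $u$, and the improvement to the rate $t^{1+1/\alpha}$ rests entirely on the cancellation of $\lambda^{n}$ against $(t\lambda)^{n}$ together with a sharp bound on the \emph{constrained} $n$-fold convolution integrals $\iint_{|\sum_{i}x_{i}|\ge\eps}\prod_{i}f(x_{i})$ — which is where the combinatorial factor $(\alpha-1)^{-1}$ (the logarithm when $\alpha=1$) is produced; carrying out this bookkeeping, summing the series over $n$, and tracking the explicit constant $\mathrm{E}_{1}$ is the bulk of the remaining work.
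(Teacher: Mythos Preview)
Your approach is genuinely different from the paper's, and the difficulty you flag at the end is real --- but the paper avoids it entirely rather than facing it.

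The paper cuts at the \emph{fixed} level $\eps/2$, not at a $t$-dependent $u\asymp t^{1/\alpha}$. Then $t\lambda_{\eps/2,\eps}\le C\,t\eps^{-\alpha}$ is genuinely small, so the case $N_{t}(\eps/2,\eps)\ge2$ is bounded trivially by $(t\lambda_{\eps/2,\eps})^{2}=O(t^{2}\eps^{-2\alpha})$, and Lemma~\ref{sj} gives $\PP(M_{t}(\eps/2)\ge\eps)=O(t^{2}\eps^{-2\alpha})$. All the work is in the $N=1$ term, namely $\int_{0}^{\eps/2}\PP(M_{t}(\eps/2)\ge z)f(\eps-z)\,dz$. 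There the paper splits at $z=t^{1/\alpha}$: for $z<t^{1/\alpha}$ the trivial bound $\PP\le1$ and $f(\eps-z)\le M(\eps/2)^{-1-\alpha}$ give $O(t^{1/\alpha}\eps^{-1-\alpha})$; for $z>t^{1/\alpha}$ a \emph{nested} cut $M_{t}(\eps/2)=M_{t}(z)+Z_{t}(z,\eps/2)$ plus Markov yields $\PP(M_{t}(\eps/2)\ge z)\le Ct\,z^{-\alpha}$, and $\int_{t^{1/\alpha}}^{\eps/2}z^{-\alpha}dz=(\alpha-1)^{-1}(t^{(1-\alpha)/\alpha}-(\eps/2)^{1-\alpha})$ produces the factor $(\alpha-1)^{-1}$ (or the logarithm when $\alpha=1$). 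Multiplying by the outer $t$ gives $t^{1+1/\alpha}$.

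By contrast, your choice $u\asymp t^{1/\alpha}$ forces $t\lambda\asymp 1$, so the Poisson tail $\{N^{R}_{t}\ge2\}$ is \emph{not} small and you must control the constrained $n$-fold integrals for every $n$. Your sketch for $n=2$ is on the right track, but the claim that $n\ge3$ ``contributes a geometrically convergent series of the same order'' is not justified and is the genuine gap: the naive bound $I_{n}\le\lambda^{n}$ together with $(t\lambda)^{n}/n!$ only gives $O(1)$, and cruder estimates on $\PP(|\sum_{i\le n}Y_{i}|\ge\eps)$ --- variance (Chebyshev) or the union bound $\{\max|Y_{i}|\ge\eps/n\}$ --- yield at best $O(t\eps^{-\alpha})$, not $O(t^{1+1/\alpha})$. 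To rescue your route you would need, for each $n\ge3$, a bound on $\int\cdots\int_{|\sum x_{i}|\ge\eps}\prod f(x_{i})\,dx$ of the form $C_{n}\lambda^{n-2}\cdot M^{2}u^{1-\alpha}\eps^{-1-\alpha}$ with $\sum C_{n}(t\lambda)^{n-2}/n!$ bounded; this may be doable but is substantially more work than what you have written, and much more than the paper's two-line nested cut. The moral: putting the $t$-dependent scale $t^{1/\alpha}$ \emph{inside} the $N=1$ integral (as the paper does) rather than as the outer cut keeps the Poisson combinatorics trivial.
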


\begin{theorem}\label{lambda2bis}
Let $\nu$ be a symmetric L\'evy measure with density $f$ with respect to the Lebesgue measure and $f\in \mathscr L_{M,\alpha}\cap \mathscr L_M$ for some $\alpha\in[1,2)$ and $M>0$. Then, for all $0<t<(\eps\wedge1/2)^\alpha\big(1\land ((2-\alpha)/2M)\big)$, $\eps>0$, it holds: 
\begin{align*}
|\PP(|X_t|>\eps)-&\lambda_\eps t|\leq {\rm G}_{1}\frac{t^{1+1/\alpha}}{(\eps\wedge 1)^{1+\alpha}}+{\rm G}_{2}\frac{t^{2}}{(\eps\wedge 1)^{2\alpha}}+\frac{5M}{2-\alpha}\frac{t^{2}\lambda_{1}}{(\eps\wedge1)^2}+\frac{4M^{2}t^{2}\eps}{2-\alpha}\1_{\eps>2} \\
&\quad+M^2t^{2}\1_{\alpha=1}\bigg(\frac{12}{\eps\wedge 1}\ln\Big(\frac{C(1\wedge\eps\wedge(\eps-1\vee 0))}{t}\Big)+\frac{16}{\eps^2} \ln\Big(\frac{\eps}{2t}\Big)\bigg)+2\lambda_{\eps\wedge1}^{2}t^{2},\end{align*}
where $C:=\big(1\wedge\big((2-\alpha)/2M\big)\big)^{1/\alpha}$ and ${\rm G}_{1}$  and ${\rm G}_{2}$ are positive constants, only depending on $M$ and $\alpha$, defined in \eqref{eq:G1G2}.

\end{theorem}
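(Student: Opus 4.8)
The plan is to follow the scheme of the proof of Theorem~\ref{teo1}, with Theorem~\ref{ps2} playing the role of Theorem~\ref{ps1}, and to keep careful track of the powers of $t$. Write $\delta:=\eps\wedge1$. Since $\nu$ is symmetric, $b(\delta)=0$, so \eqref{eq:x} becomes $X_t=M_t(\delta)+Z_t(\delta)$ with $M(\delta)$ independent of $Z(\delta)$, $M_t(\delta)$ symmetric, and $Z_t(\delta)=\sum_{i=1}^{N_t(\delta)}Y_i$ where $N(\delta)$ is a Poisson process of rate $\lambda_\delta$ and the $Y_i$ are i.i.d.\ with law $\nu_{|\R\setminus[-\delta,\delta]}/\lambda_\delta$, independent of $N(\delta)$. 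Conditioning on $N_t(\delta)$ gives
\begin{align*}
\PP(|X_t|>\eps)=\PP(|M_t(\delta)|>\eps)\,e^{-\lambda_\delta t}+\lambda_\delta t\,e^{-\lambda_\delta t}\,\PP\bigl(|M_t(\delta)+Y_1|>\eps\bigr)+\PP\bigl(|X_t|>\eps,\ N_t(\delta)\ge2\bigr).
\end{align*}
The third term is at most $\PP(N_t(\delta)\ge2)\le\tfrac12\lambda_\delta^2t^2$; the first is at most $\PP(|M_t(\delta)|>\delta)$, which Theorem~\ref{ps2} (applied with cutoff $\delta$) bounds by the contributions ${\rm G}_1t^{1+1/\alpha}(\eps\wedge1)^{-1-\alpha}$, ${\rm G}_2t^2(\eps\wedge1)^{-2\alpha}$ and, when $\alpha=1$, the term $\tfrac{16}{\eps^2}t^2\ln(\eps/2t)$. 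Everything then reduces to comparing the middle term with $\lambda_\eps t$.

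For the middle term I would use independence of $M_t(\delta)$ and $Y_1$ together with Tonelli's theorem to obtain
\begin{align*}
\lambda_\delta\,\PP\bigl(|M_t(\delta)+Y_1|>\eps\bigr)=\lambda_\eps-\E\Bigl[\nu\bigl([-\eps-M_t(\delta),\ \eps-M_t(\delta)]\setminus[-\eps,\eps]\bigr)\Bigr]+\int_{\delta<|y|\le\eps}\PP\bigl(|M_t(\delta)+y|>\eps\bigr)\,\nu(dy),
\end{align*}
the last integral being vacuous when $\eps\le1$; hence the middle term equals $\lambda_\eps t\,e^{-\lambda_\delta t}$ plus two corrections, and since $|\lambda_\eps t(e^{-\lambda_\delta t}-1)|\le\lambda_\delta^2t^2$ this already accounts (with the $N_t(\delta)\ge2$ term) for the $2\lambda_{\eps\wedge1}^2t^2$ term. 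Because every point of $[-\eps-m,\eps-m]\setminus[-\eps,\eps]$ lies in $\{y:\eps<|y|\le\eps+|m|\}$, symmetry and $f(x)\le M|x|^{-1-\alpha}$ (together with $f(x)\le M$ for $|x|\ge1$) give $\nu([-\eps-m,\eps-m]\setminus[-\eps,\eps])\le 2\nu((\eps,\eps+|m|])\le 2M(\eps\wedge1)^{-1-\alpha}|m|$, and the coarse bound $\nu((\eps,\eps+|m|])\le\tfrac{M}{\alpha}\eps^{-\alpha}+\lambda_1$; combining these, the first correction $t\,\E[\nu(\cdots)]$ is bounded by $2M(\eps\wedge1)^{-1-\alpha}\,t\,\E[|M_t(\delta)|\wedge\delta]$ plus lower-order tail terms of $M_t(\delta)$ at scales $\ge\delta$ controlled by Lemma~\ref{sj} and Theorem~\ref{ps2} (yielding the $\tfrac{5M}{2-\alpha}t^2\lambda_1(\eps\wedge1)^{-2}$ term). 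The second correction occurs only for $\eps>1$, i.e.\ $\delta=1$: bounding $\PP(|M_t(1)+y|>\eps)\le\PP(|M_t(1)|>\eps-|y|)$ and using $f\le M$ on $(1,\eps]$ reduces it to $2Mt\,\E[|M_t(1)|\wedge(\eps-1)]$, and the splitting $\E[|M_t(1)|\wedge(\eps-1)]\le\E[|M_t(1)|\wedge1]+(\eps-2)_+\PP(|M_t(1)|>1)$ together with the Markov bound $\PP(|M_t(1)|>1)\le t\sigma^2(1)\le 2Mt/(2-\alpha)$ produces the $\tfrac{4M^2t^2\eps}{2-\alpha}\1_{\eps>2}$ term.

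The heart of the matter, and the main obstacle, is the estimate
\begin{align*}
\E[|M_t(\delta)|\wedge\delta]=\int_0^\delta\PP(|M_t(\delta)|\ge u)\,du\le C_\alpha\,(Mt)^{1/\alpha}\ \ (1<\alpha<2),\qquad \E[|M_t(\delta)|\wedge\delta]\le C\,Mt\ln\!\Big(\tfrac{\delta}{Mt}\Big)\ \ (\alpha=1),
\end{align*}
which is precisely what replaces the naive rate $t^{3/2}$ obtained from $\E[M_t(\delta)^2]=t\sigma^2(\delta)\le\tfrac{2M}{2-\alpha}\delta^{2-\alpha}t$ by the optimal $t^{1+1/\alpha}$, and is the reason the final bound carries a leading $t^{1+1/\alpha}$ rather than $t^2$. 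I would derive it from the tail bound $\PP(|M_t(\delta)|\ge u)\le C\,Mt\,u^{-\alpha}$ valid for all $0<u\le\delta$: separate $M_t(\delta)=M_t(u)+R_t$ where, by symmetry, $R_t$ is an uncompensated sum of jumps of size in $(u,\delta]$, a compound Poisson variable of rate at most $\tfrac{2M}{\alpha}u^{-\alpha}$, independent of $M_t(u)$, so that $\PP(|M_t(\delta)|\ge u)\le\tfrac{2M}{\alpha}u^{-\alpha}t+\PP(|M_t(u)|\ge u)$, and the last probability is controlled by the Chernov inequality of Lemma~\ref{sj} at scale $u$. Integrating this tail bound above a threshold of order $(Mt)^{1/\alpha}$ (and using $\PP(|M_t(\delta)|\ge u)\le1$ below it) gives the displayed estimate, the threshold being responsible for the logarithm when $\alpha=1$; the two logarithmic contributions for $\alpha=1$ and the extra case analysis for $\eps>1$ account for most of the remaining work. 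Collecting the four groups of terms and absorbing the $\alpha$- and $M$-dependent constants into ${\rm G}_1$ and ${\rm G}_2$ yields the claimed inequality.
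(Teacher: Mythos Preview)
Your proposal is correct and follows essentially the same route as the paper. Both arguments decompose on $N_t(\eps\wedge1)$, invoke Theorem~\ref{ps2} for the $N_t=0$ contribution, and reduce the $N_t=1$ contribution to the tail bound $\PP(|M_t(\delta)|\ge u)\le C\,Mt\,u^{-\alpha}$ (obtained from $M_t(\delta)=M_t(u)+R_t$, Markov on $M_t(u)$, and $\PP(R_t\neq0)\le t\lambda_{u,\delta}$), integrated in $u$ with threshold of order $t^{1/\alpha}$. The only difference is presentational: the paper (via Lemma~\ref{yalphabis}) writes the $N_t=1$ remainder directly as integrals $\int \PP(M_t(\rho)>y)f(\eps\pm y)\,dy$ and bounds $f$ pointwise before integrating, whereas you first rewrite the same quantity as $\E[\nu([-\eps-M_t,\eps-M_t]\setminus[-\eps,\eps])]$, bound $f$ by $M(\eps\wedge1)^{-1-\alpha}$, and then recognize the remaining integral as $\E[|M_t(\delta)|\wedge\delta]=\int_0^\delta\PP(|M_t(\delta)|\ge u)\,du$. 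The two computations are the same up to the order of operations, and your repackaging via $\E[|M_t(\delta)|\wedge\delta]$ makes the source of the $t^{1/\alpha}$ rate pleasantly transparent.
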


Compared to Theorems \ref{ps1} and \ref{teo1}  the rates of Theorems \ref{ps2} and \ref{lambda2bis} are slower as $t^{2}\le t^{1+1/\alpha}$ for $\alpha\in(1,2)$. Nevertheless, the rate $t^{1+1/\alpha}$ of Theorems \ref{ps2} and \ref{lambda2bis} seems optimal. Indeed,  as shown in Remark 3.5 of \cite{LH} (see also \cite{marchal2009small}) it is possible to build a discontinuous L\'evy measure $f$ as the sum of an $\alpha$-stable L\'evy process plus a compound Poisson process presenting a discontinuity at $\eps$ that lies in $\mathscr L_{M,\alpha}$ and attains this rate $t^{1+1/\alpha}$. Adding a regularity assumption on $f$ on a neighborhood of $\eps$, it is possible to have a finer bound in $t^{2}$ as established in the following result.

\begin{theorem}\label{lambda2}
Let $\nu$ be a symmetric L\'evy measure having a density $f$ with respect to the Lebesgue measure with $f\in \mathscr L_{M,\alpha}$ for some $\alpha\in[1,2)$ and $M>0$. Let $\eps>0$ and assume that $f$ is  $M(\eps\wedge 1)^{-(2+\alpha)}$-Lipschitz on the interval $((3/4(\eps\wedge1),2\eps-3/4(\eps\wedge1))$. For all $0<t\leq  \frac{(2-\alpha)(1\wedge\eps)^\alpha}{2^{1+\alpha}M}$, it holds:
\begin{align*}
|\PP(|X_t|>\eps)-\lambda_\eps t|&\leq
 t^2M^2 \big( ({\rm F}_{1}\eps^{-2\alpha}+\lambda_1\eps^{-\alpha}{\rm F}_2) \1_{0<\eps\leq 1}+(\eps^2{\rm F}_3 +{\rm F}_{4})\1_{\eps>1} \big) +2t^2\lambda_1^2 +\frac{t^4 M^4 {\rm F}_5}{(\eps\wedge 1)^{4\alpha}},
\end{align*}
where ${\rm F}_1,\dots, {\rm F}_5$ are universal positive constants, only depending on $\alpha$, defined in \eqref{d1d2}.
\end{theorem}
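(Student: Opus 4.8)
The plan is to start from the L\'evy--It\^o splitting $X_t=M_t(\eps)+Z_t(\eps)$ (the symmetry of $\nu$ kills the drift $b(\eps)$), condition on the number $N_t(\eps)$ of jumps of the compound Poisson part $Z(\eps)$, and isolate the two first-order events: $\{N_t(\eps)=0\}$ (a \emph{surplus}: $|M_t(\eps)|>\eps$ with no big jump) and $\{N_t(\eps)=1\}$ (a \emph{deficit}: one jump $Y_1$, $|Y_1|>\eps$, that $M_t(\eps)$ may carry back inside $[-\eps,\eps]$). The contributions of $\{N_t(\eps)\ge 2\}$ and of replacing $e^{-\lambda_\eps t}$ by $1$ are $O\big((\lambda_\eps t)^2\big)$, and since $f\in\mathscr L_{M,\alpha}$ forces $\lambda_\eps\le\lambda_1+\tfrac{2M}{\alpha}(\eps\wedge1)^{-\alpha}$, this already yields the $2t^2\lambda_1^2$ term and part of $F_1,F_2$. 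One is then left with
\[
|\PP(|X_t|>\eps)-\lambda_\eps t|\ \le\ \big|\PP(|M_t(\eps)|>\eps)-\lambda_\eps t\,\PP(|M_t(\eps)+Y_1|\le\eps)\big|+O\big((\lambda_\eps t)^2\big),
\]
and the whole point is that, unlike in Theorems \ref{ps2} and \ref{lambda2bis} where the two terms on the right are bounded \emph{separately} (hence the $t^{1+1/\alpha}$), the Lipschitz hypothesis will make them nearly cancel.

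To exhibit the cancellation I would first use the symmetry of $M_t(\eps)$ and of $\nu$ to rewrite the surplus as $2\PP(M_t(\eps)>\eps)$ and the deficit as $2t\int_\eps^\infty f(y)\PP\big(M_t(\eps)\in[y-\eps,y+\eps]\big)\,dy$, and then perform a \emph{second} one-big-jump decomposition $M(\eps)=M(\rho)+C(\rho,\eps)$ with $\rho=(\eps\wedge1)/2$, where $C(\rho,\eps)$ is compound Poisson of rate $\mu\le\tfrac{2M}{\alpha}\rho^{-\alpha}$ carrying the jumps in $\rho<|x|\le\eps$. Conditioning on the number of such jumps in both expressions and discarding: (i) the ``no $(\rho,\eps]$-jump but $M_t(\rho)>\eps$'' term, which is $O\big(t^2M^2(\eps\wedge1)^{-2\alpha}\big)$ by the Chernov bound \eqref{sg} --- legitimately applicable because the hypothesis on $t$ is exactly calibrated so that $t\sigma^2(\rho)\rho^{-2}\le1$, and for $\alpha=1$ it carries the logarithmic factor appearing in the $\alpha=1$ estimate of Theorem \ref{ps2}; (ii) the ``wrong-sign'' jumps of $C(\rho,\eps)$ and the difference between $\PP(M_t(\eps)\in[y-\eps,y+\eps])$ and $\PP(M_t(\rho)\ge y-\eps)$, both controlled by $\mu t$ together with Chernov/Chebyshev tails; (iii) the $\ge 2$-jump remainders, which produce the $t^4M^4(\eps\wedge1)^{-4\alpha}$ term; one finds that the surplus and the deficit reduce to the \emph{same} integral up to the swap $f(\eps-s)\leftrightarrow f(\eps+s)$, so that
\[
\PP(|M_t(\eps)|>\eps)-\lambda_\eps t\,\PP(|M_t(\eps)+Y_1|\le\eps)=2t\!\int_0^{\eps-\rho}\!\!\big(f(\eps-s)-f(\eps+s)\big)\PP\big(M_t(\rho)\ge s\big)\,ds+(\text{remainders}).
\]

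It remains to bound this integral, which I would split at $s=(\eps\wedge1)/4$. For $s\in\big(0,(\eps\wedge1)/4\big)$ both arguments $\eps\pm s$ lie in the Lipschitz interval $\big(\tfrac34(\eps\wedge1),\,2\eps-\tfrac34(\eps\wedge1)\big)$, so $|f(\eps-s)-f(\eps+s)|\le 2s\,M(\eps\wedge1)^{-(2+\alpha)}$; combined with $\int_0^\infty s\,\PP(M_t(\rho)\ge s)\,ds=\tfrac12\E[(M_t(\rho))_+^2]\le\tfrac12 t\sigma^2(\rho)\le\tfrac{M}{2-\alpha}t\rho^{2-\alpha}$, this piece is $O\big(\tfrac{M^2}{2-\alpha}t^2(\eps\wedge1)^{-2\alpha}\big)$ --- this is precisely the mechanism that turns the $t^{1+1/\alpha}$ of Theorem \ref{lambda2bis} into $t^2$, and it is the source of $F_1$. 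For $s\in\big((\eps\wedge1)/4,\eps-\rho\big)$ one only has the crude bound $|f(\eps-s)-f(\eps+s)|\le f(\eps-s)+f(\eps+s)\le 2M\big((\eps\wedge1)/2\big)^{-(1+\alpha)}$ (using $\mathscr L_M$ when the argument exceeds $2$), but now $s$ is bounded away from $0$, so Chebyshev's inequality gives $\int_{(\eps\wedge1)/4}^{\eps-\rho}\PP(M_t(\rho)\ge s)\,ds\le t\sigma^2(\rho)\int_{(\eps\wedge1)/4}^\infty s^{-2}\,ds\lesssim t\sigma^2(\rho)(\eps\wedge1)^{-1}$, again $O(t^2)$; when $\alpha=1$ the corresponding integrals are $\int s^{-1}\,ds$, logarithmically divergent and cut off at the scale $\big((2-\alpha)/2M\big)^{1/\alpha}\wedge(\eps\wedge1)$, which yields the $\ln(C(\cdot)/t)$ corrections. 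Assembling everything and collecting constants gives $F_1,\dots,F_5$; for $\eps>1$ one argues analogously (now the rate $\mu=\nu(\{\rho<|x|\le\eps\})$, carrying all jumps up to $\eps$ and bounded via $\mathscr L_M$, grows like $M\eps$), which accounts for the polynomial-in-$\eps$ terms $\eps^2F_3,F_4$.

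The main obstacle is step two: organising the two one-big-jump expansions so that, \emph{before} using any regularity of $f$, the surplus and the deficit are genuinely the same integral plus controlled remainders --- i.e. keeping the residual process $M(\rho)$ (not $M(\eps)$) consistently on both sides, and checking that replacing $\PP(M_t(\rho)\ge s)$ by $\PP(M_t(\eps)\in[s,s+2\eps])$ on the deficit side costs at most $\mu t$ up to Chernov tails. Once that matching is in place, everything downstream is a long but routine exercise in bounding remainders with Lemma \ref{sj}, Chebyshev's inequality, and the $\mathscr L_{M,\alpha}\cap\mathscr L_M$ bounds on $\lambda_\eps$, $\mu$ and $\sigma^2(\cdot)$, together with verifying throughout that the stated range of $t$ makes the Chernov estimates applicable.
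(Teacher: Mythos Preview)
Your approach is correct in spirit and lands on the same cancellation mechanism as the paper, but the organisation is different and more laborious. The paper does \emph{not} split at $\eps$ first; it cuts once at $\rho=\tfrac34(\eps\wedge1)<\eps$ and writes
\[
|\PP(|X_t|>\eps)-\lambda_\eps t|\le \PP(|M_t(\rho)|>\eps)+\big|\lambda_\rho t\,\PP(|M_t(\rho)+Y_1^{(\rho)}|>\eps)-\lambda_\eps t\big|+2(\lambda_\rho t)^2.
\]
Because $Y_1^{(\rho)}$ already lives on $\{|y|>\rho\}$, the single term in the middle (Lemma~\ref{yalpha}) simultaneously contains the jumps in $(\rho,\eps)$ (your ``surplus'' one-jump part) and the jumps in $(\eps,\infty)$ (your ``deficit''), and both come paired with the \emph{same} residual process $M_t(\rho)$. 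The cancellation integral $\int_0^{\eps-\rho}\big(f(\eps-s)-f(\eps+s)\big)\PP(M_t(\rho)>s)\,ds$ therefore falls out immediately, with no need to reconcile $M_t(\eps)$ against $M_t(\rho)$ --- the ``main obstacle'' you identify simply does not arise. The first term $\PP(|M_t(\rho)|>\eps)$ is handled separately (Lemma~\ref{psalpha}) by one more $\eps/2$-split; this is where the $t^4M^4(\eps\wedge1)^{-4\alpha}$ term actually comes from, via a third-power Chernov bound on a wrong-sign jump, not from ``$\ge2$-jump remainders'' as you state.

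Two small slips in your sketch: your remark that $\alpha=1$ produces ``$\ln(C(\cdot)/t)$ corrections'' confuses this result with Theorem~\ref{lambda2bis}; here the $\alpha=1$ integrals are over fixed intervals like $(\eps/4,\eps/2)$ and yield only constants such as $\ln 2$, absorbed into $F_1,F_4$. And the theorem does not assume $f\in\mathscr L_M$, so ``using $\mathscr L_M$ when the argument exceeds $2$'' is not available; the paper instead leaves quantities like $\lambda_1,\lambda_{7/4}$ unexpanded in the bound.
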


First, note that any L\'evy density $f$ that writes as $L(x)/x^{1+\alpha}$ for $x\in[-2,2]\setminus \{0\}$, where $L$ is differentiable, bounded, with bounded derivative and $\alpha\in[1,2)$ satisfies the assumptions of Theorem \ref{lambda2}. Moreover, under the latter assumption, Theorem \ref{lambda2} applied to a L\'evy process $X$ whose L\'evy density $f$ is concentrated on $[-\eps,\eps]$, $\eps\in(0,1],$ leads to a finer rate than the one of Theorem \ref{ps2}, namely,
\begin{align*}
\PP(|M_t(\eps)|>\eps)&\leq
 t^2M^2 ({\rm F}_{1}\eps^{-2\alpha}+\lambda_1\eps^{-\alpha}{\rm F}_2) +2t^2\lambda_1^2+\frac{t^4 M^4 {\rm F}_5}{(\eps\wedge 1)^{4\alpha}}.
\end{align*}

\subsection{Discussion}

The results of Theorems \ref{ps1} to \ref{lambda2} are non-asymptotic and show the impact of the cutoff $\eps$ in the constants. In particular they permit to recover, for every fixed $\eps>0$, on the classes considered, the result of Lemma \ref{lemma0} having $t\to0$. \medskip

\noindent\textbf{Optimality of the results} The rates of Theorems  \ref{ps1}, \ref{teo1} and \ref{lambda2}  are of the form $t^{2}(\eps\wedge 1)^{-2\alpha}$, up to a constant depending on $M$ and $\alpha$. This quantity is optimal in $t$ on the considered classes. Indeed, in next Section \ref{sec:ex} we show that for compound Poisson processes, for which explicit calculations can be performed and which are included in $\mathscr	L_{M,\alpha}$ for all $\alpha\in(0,2),$ examples can be built attaining this rate. As already highlighted, the rate of Theorem \ref{ps2} is also optimal. The dependency  in $\eps$  of the constant $\eps^{-2\alpha}$ also appears to be the right one, since, for an $\alpha$-stable process, it holds that $\lambda_{\eps}=O(\eps^{-\alpha})$.
Therefore, in general it is not possible to improve the rates derived in these Theorems, even though this might be possible on specific examples (see the Cauchy process in Section \ref{sec:ex}).\medskip

\noindent\textbf{Strategy of the proofs} All the proofs are self-contained, they rely on the decomposition  \eqref{eq:x}, which holds for any L\'evy process and any level $\eps>0$, and Lemma \ref{sj}. More precisely, to establish Theorems \ref{teo1} and \ref{lambda2bis}, we consider the decomposition   \eqref{eq:x}, and write  
\begin{align*}
\PP(|X_t|>\eps)&=\PP\bigg(\Big|tb(\eps)+M_t(\eps)+\sum_{i=1}^{N_t(\eps)}Z_i\Big|>\eps\bigg).
\end{align*}
Decomposing on the values of the Poisson process $N(\eps)$ leads to
{\small\begin{align}\label{eq:strategy}
 |\PP(|X_t|>\eps)-\lambda_\eps t|&\leq \PP(|tb(\eps)+M_t(\eps)|>\eps) +\lambda_\eps t|\PP(|tb(\eps)+M_t(\eps)+Z_1|>\eps) e^{-\lambda_\eps t}-1|+\PP(N_{t}(\eps)\ge 2).
\end{align}}
The last term raises no difficulty as $\PP(N_{t}(\eps)\ge 2)=O(\lambda_{\eps}^{2}t^{2})$. The first term is treated in Theorems \ref{ps1}  and \ref{ps2} which are established using  decomposition   \eqref{eq:x} at level $\eps/2$ and Lemma \ref{sj}. The proof of Theorem \ref{ps1} is made particularly technical by the presence of the drift term $b(\eps)$. This is the reason why, in the infinite variation counterpart Theorem \ref{ps2} we specialize to the symmetric case, hence $b(\eps)=0$.
Finally, to prove Theorems \ref{teo1} and \ref{lambda2} (resp. Theorem \ref{lambda2bis}) it remains to show:  $\PP(|tb(\eps)+M_t(\eps)+Z_1|\leq\eps)=O(t\lambda_\eps)$ (resp. $O_{\eps}(t^{1/\alpha})$) which corresponds to proving that
$|\PP(|tb(\eps)+M_t(\eps)+Z_1|>\eps) e^{-\lambda_\eps t}-1|=O(\lambda_\eps t)$  (resp. $O_{\eps}(t^{1/\alpha})$).

For this term the cases of finite variation (Theorem \ref{teo1}) and infinite variation (Theorems \ref{lambda2bis} and \ref{lambda2}) L\'evy processes essentially differ. For finite variation L\'evy processes, $\alpha\in(0,1)$, the result $\PP(|tb(\eps)+M_t(\eps)+Z_1|\leq\eps)=O(t\lambda_\eps)$ holds true and a main difficulty here lies in the management of the drift that can be nonzero. For infinite variation L\'evy processes, $\alpha\in[1,2)$, this result is not true in general. For instance, consider the case of a Cauchy process $X$ and fixed $\eps$.  The Cauchy process has a L\'evy density $(\pi x^{2})^{-1}\mathbf{1}_{\R\setminus\{0\}}$ and is therefore in $\mathscr L_{1/\pi,1}\cap\mathscr L_{1/\pi}$ and is  $\pi^{-1}2^{7}3^{-3}(\eps\wedge 1)^{-3}$-Lipschitz on the interval $((3/4(\eps\wedge1),2\eps-3/4(\eps\wedge1))$ for all $\eps>0$. Theorems \ref{ps2}, \ref{lambda2bis}, and \ref{lambda2} thus apply. For this example direct calculations allow to show that  $|\PP(|X_t|>\eps)-\lambda_\eps t|=O(\lambda_\eps^3 t^3)$ (see Section \ref{sec:ex}), however   $\lim_{t\to 0}\frac{\PP(|tb(\eps)+M_t(\eps)+Z_1|\leq\eps)}{t}=\infty$ implying that $\PP(|tb(\eps)+M_t(\eps)+Z_1|\leq\eps)=O(t\lambda_\eps)$ cannot hold. Indeed, the L\'evy measure being symmetric it leads to $b(\eps)=0$ and 
$$\PP(|M_t(\eps)+Z_1|\leq \eps)=\frac{1}{\lambda_\eps}\int_{-\infty}^{-\eps}\PP(|M_t(\eps)+z|\leq\eps) \frac{dz}{\pi z^2}+\frac{1}{\lambda_\eps}\int_{\eps}^{\infty}\PP(|M_t(\eps)+z|\leq\eps) \frac{dz}{\pi z^2}.$$
Fatou Lemma, joint with $\lim_{t\to 0}\frac{\PP(M_t(\eps)\in A)}{t}=\nu_\eps(A)$, $\nu_\eps=\nu\1_{|x|\leq \eps}$, and $f$ being symmetric, gives
\begin{align*}
\lambda_\eps \liminf_{t\to 0} \frac{\PP(|M_t(\eps)+Z_1|\leq \eps)}{t}&\geq \Big(\int_{-\infty}^{-\eps}+\int_{\eps}^\infty\Big)\liminf_{t\to 0}\frac{\PP(M_t(\eps)\in(-\eps-z,\eps-z))}{t}\frac{dz}{\pi z^2}\\
&\ge\int_{\eps}^\infty \nu_\eps(z-\eps,z+\eps)\nu(dz)=\int_{\eps}^{2\eps} \nu_\eps(z-\eps,\eps)\nu(dz)\\
&=\frac{1}{\pi^{2}}\int_{\eps}^{2\eps} \frac{2\eps-z}{\eps(z-\eps)}\frac{dz}{z^2}=\infty.
\end{align*}
We derive that the decomposition \eqref{eq:strategy} that leads to Theorem \ref{teo1}, $\alpha\in(0,1)$, does not permit to obtain optimal results for $\alpha\in[1,2)$ such as Theorem \ref{lambda2}. This is instead obtained by firstly adding a regularity assumption in a neighborhood of $\eps$ and secondly modifying the decomposition \eqref{eq:strategy}, considering a cutoff level $\eps'<\eps$, for example $\eps'=3\eps/4$ (see Lemmas \ref{psalpha} and \ref{yalpha} below).

Generalizing the results of Theorems  \ref{ps2}, \ref{lambda2bis} and \ref{lambda2} to non-symmetric L\'evy processes is possible at the expense of more cumbersome proofs and modifying the conditions on $t$.

\subsection{Examples\label{sec:ex}}
We consider four examples of L\'evy processes for which explicit formulas for their laws are available. This permits to conduct direct computations and expansions for the marginal laws and allows to compare them with the previous results. Let us stress that even in these cases where the law of the process is known, we do not know the law of the process corresponding to its small jumps. Besides the compound Poisson process, it is hard to propose examples to compare with Theorems \ref{ps1} and \ref{ps2}.
 Finally, we present a non-asymptotic control of the marginal law of $\alpha$-stable type processes. Proofs are postponed to Section \ref{proof:ex}.
\begin{enumerate}
\item  Let $X$ be a \textbf{compound Poisson process}. Then, for any $\eps>0$ 
$$
\big|\PP(|X_{t}|>{\varepsilon})-\lambda_{\varepsilon}t \big| = O_{\eps}\big(t^2\big)\quad\text{and}\quad \PP(|M_{t}(\eps\wedge 1)+tb(\eps\wedge 1)|>{\varepsilon\wedge 1}) = O_{\eps}\big(t^2\big),\quad \text{ as }t\to 0.
$$
It is possible to build examples for which these rates are sharp (see Section \ref{proof:ex}).
\item
Let $X$ be a  \textbf{Gamma process} of parameter $(1,1)$, that is a finite variation L\'evy process with L\'evy density $f(x)=\frac{e^{-x}}{x}\1_{(0,\infty)}(x)$, $\lambda_\varepsilon=\int_\varepsilon^\infty \frac{e^{-x}}{x}dx$ and $$\PP(|X_{t}|>\eps)=\PP(X_t>\varepsilon)=\int_\varepsilon^\infty \frac{x^{t-1}}{\Gamma(t)}e^{-x}dx,\quad \forall \eps>0,$$ where $\Gamma(t)$ denotes the $\Gamma$ function, i.e. $\Gamma(t)=\int_{0}^\infty x^{t-1}e^{-x}dx$. Then,  $$
\big|\PP(X_{t}>{\varepsilon})-\lambda_{\varepsilon}t \big| = O_{\eps}\big(t^2\big),\quad \text{ as }t\to 0.
$$

\item 
Let $X$ be an \textbf{inverse Gaussian process} of parameter $(1,1)$, i.e.
$$f(x)=\frac{e^{-x}}{x^{\frac{3}{2}}}\1_{(0,\infty)}(x)\quad \text{ and }\quad \PP(X_t>\varepsilon)=t e^{2t\sqrt \pi}\int_\varepsilon^\infty\frac{e^{-x-\frac{\pi t^2}{x}}}{x^{\frac{3}{2}}}dx,\quad \forall \eps>0.$$
Then,
\begin{equation}\label{IGP}
\big|\PP(|X_t|>\varepsilon)-t\lambda_{\varepsilon}\big|=O_{\eps}\big(t^2\big), \quad \text{ as }t\to0.
\end{equation}

\item \textbf{Cauchy processes.}
Let $X$ be a $1$-stable L\'evy process with
$$f(x)=\frac{1}{\pi x^2}\1_{\R\setminus \{0\}}\quad \text{ and }\quad \PP(|X_t|>\varepsilon)=2\int_{\frac{\varepsilon}{t}}^{\infty}\frac{dx}{\pi(x^2+1)},\quad \forall\eps>0.$$
Then, 
\begin{equation}\label{eq:cauchy}
 \big|\PP(|X_t|>\varepsilon)-t\lambda_\varepsilon\big|=O_{\eps}\big(t^3\big),\quad \text{ as }t\to 0.
\end{equation} 
For this example, the bound of Theorem \ref{lambda2} is suboptimal. However, improving Theorem \ref{lambda2} relying on the same strategy of proof, \emph{i.e.} using compound Poisson approximations, is hopeless and a different approach should be considered.

\item \textbf{$\alpha$-stable type processes.} 
Results for the cumulative distribution function for $\alpha$-stable  processes were already known (see \textit{e.g.} \cite{marchal2009small}). The following result is a generalization to any L\'evy process whose L\'evy measure behaves as an $\alpha$-stable  process in a neighborhood of the origin such as a tempered stable L\'evy prcess (see \textit{e.g.}  \cite{tankov} Section 4.2 or \cite{rosinski2007tempering}).  \end{enumerate}

\begin{corollary}\label{cor}
Let $X$ be a symmetric L\'evy process with a L\'evy measure $\nu$ absolutely continuous with respect to the Lebesgue measure and denote by $f=\frac{d\nu}{dx}$. Suppose that there exist $\alpha\in(0,2)$, $M_1>0$ and $M_2>0$ such that $M_1|x|^{-(1+\alpha)}\leq  |f(x)|\leq M_2|x|^{-(1+\alpha)}$, for all $0<|x|\leq 2$. Let $\eps\in(0,1]$ and $t>0$. We have:  
\begin{itemize}
\item If $\alpha\in(0,1):$ 
there exists a constant $\mathbf A_{M_1,M_2,\alpha} >0$, only depending on $M_1$, $M_2$ and $\alpha$, such that
\begin{equation*}
|\PP(|X_t|>\eps)-\lambda_\eps t|\leq \mathbf A_{M_1,M_2,\alpha} t^2\lambda_\eps^2,\quad \forall \ t\lambda_\eps\leq 2^{-\alpha}(2-\alpha)\alpha^{-1}.
\end{equation*}
\item If $\alpha\in [1,2)$ and $f\in \mathscr L_{M_2}$: there exist two constants $\mathbf B_{M_1,M_2,\alpha} >0$ and $\tilde B$, only depending on $M_1$, $M_2$ and $\alpha$, such that $\forall \ t\lambda_\eps\leq 2^{1-\alpha}M_2(1\wedge(2-\alpha)/2M_2)\alpha^{-1}$ it holds
\begin{equation*}
|\PP(|X_t|>\eps)-\lambda_\eps t|\leq \mathbf B_{M_1,M_2,\alpha} t^{1+1/\alpha}\lambda_\eps^{1+1/\alpha}\Big(\1_{\alpha\in(1,2)}+\ln\Big(\frac{\tilde B}{\lambda_\eps t}\Big)\1_{\alpha=1}\Big).
\end{equation*} 
\item If $\alpha\in [1,2)$ and $f$ is globally $M\eps^{-(2+\alpha)}$-Lipschitz on the interval $((3/4\eps,2\eps-3/4\eps)$: there exists a constant $\mathbf C_{M_1,M_2,\alpha} >0$, only depending on $M_1$, $M_2$ and $\alpha$, such that
\begin{equation*}
|\PP(|X_t|>\eps)-\lambda_\eps t|\leq \mathbf C_{M_1,M_2,\alpha} t^2 \lambda_\eps^2,\quad \forall \ t\lambda_\eps\leq 2^{-\alpha}(2-\alpha)\alpha^{-1}.
\end{equation*}
\end{itemize}
\end{corollary}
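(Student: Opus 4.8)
The plan is to derive Corollary \ref{cor} as a direct consequence of Theorems \ref{ps1}, \ref{teo1}, \ref{ps2}, \ref{lambda2bis} and \ref{lambda2}, the point being merely to rewrite the various upper bounds in terms of $\lambda_\eps$ alone, exploiting the two-sided bound $M_1|x|^{-(1+\alpha)}\le f(x)\le M_2|x|^{-(1+\alpha)}$ on $[-2,2]\setminus\{0\}$. The first step is the elementary computation of the order of $\lambda_\eps$ under this hypothesis: integrating the bounds over $\{|x|>\eps\}$ gives, for $\eps\in(0,1]$,
\begin{equation*}
\frac{2M_1}{\alpha}\bigl(\eps^{-\alpha}-2^{-\alpha}\bigr)\le \lambda_{\eps}-\lambda_2\le \lambda_\eps\le \frac{2M_2}{\alpha}\eps^{-\alpha}+\lambda_2,
\end{equation*}
so that $\lambda_\eps\asymp \eps^{-\alpha}$ up to constants depending only on $M_1,M_2,\alpha$ (the additive $\lambda_2$ term is harmless since $\lambda_2\le \lambda_1$ and, when $f\in\mathscr L_{M_2}$, one controls $\lambda_1,\lambda_2$ by $M_2$ plus a tail that is bounded; if $\nu$ has heavy tails one simply keeps $\lambda_1$ explicit and absorbs it). In particular $\eps^{-\alpha}\le \tfrac{\alpha}{2M_1}\lambda_\eps$ and $\lambda_\eps\le C\eps^{-\alpha}$, which will let us convert every $\eps^{-\alpha}$, $\eps^{-2\alpha}$, $\eps^{-(1+\alpha)}$, $\lambda_1\eps^{-\alpha}$, $\lambda_\eps^2$, etc., appearing in the cited theorems into a power of $\lambda_\eps$ times a constant.

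Next I would treat the three regimes separately. For $\alpha\in(0,1)$: the hypotheses of the symmetric part of Theorem \ref{teo1} (case $\eps\in(0,1]$) are met, giving a bound of the form $t^2M_2^2\eps^{-2\alpha}(\cdots)+t^2M_2(\lambda_\eps+\lambda_{2\eps})\eps^{-\alpha}(\cdots)+t^2\lambda_\eps^2$; using $\eps^{-\alpha}\le \tfrac{\alpha}{2M_1}\lambda_\eps$ and $\lambda_{2\eps}\le\lambda_\eps$ collapses all three terms into $\mathbf A_{M_1,M_2,\alpha}\,t^2\lambda_\eps^2$, and the threshold $t<\eps^\alpha(2-\alpha)M_2^{-1}2^{-\alpha-1}$ translates, via $\eps^{-\alpha}\le\tfrac{\alpha}{2M_1}\lambda_\eps$... more carefully via $\eps^\alpha\ge (\tfrac{2M_1}{\alpha}\lambda_\eps^{-1})$-type inequalities, into a condition $t\lambda_\eps\le 2^{-\alpha}(2-\alpha)\alpha^{-1}$ after adjusting constants (one has to be a little careful about which direction each inequality goes; the clean way is to observe $\lambda_\eps t\le C'\eps^{-\alpha}t$ and require the right-hand side small). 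For $\alpha\in[1,2)$ with $f\in\mathscr L_{M_2}$ one invokes Theorem \ref{lambda2bis}, whose leading term is ${\rm G}_1 t^{1+1/\alpha}(\eps\wedge1)^{-(1+\alpha)}$ plus $t^2$-order terms and the $\alpha=1$ logarithmic correction; since $(\eps\wedge1)^{-(1+\alpha)}=\eps^{-(1+\alpha)}=\eps^{-\alpha}\cdot\eps^{-1}\le (\tfrac{\alpha}{2M_1}\lambda_\eps)^{(1+\alpha)/\alpha}$ up to constants (because $\eps\le1$ forces $\eps^{-1}\le\eps^{-\alpha}$... actually $\eps^{-(1+\alpha)}=(\eps^{-\alpha})^{(1+\alpha)/\alpha}$), this is $\asymp (\lambda_\eps t)^{1+1/\alpha}\cdot t^{-0}$—wait, one rewrites $t^{1+1/\alpha}\eps^{-(1+\alpha)} = (t\eps^{-\alpha})^{1+1/\alpha}\eps^{0}$, hmm $t^{1+1/\alpha}\eps^{-(1+\alpha)} = (t\eps^{-\alpha})^{(1+1/\alpha)}$ precisely because $(1+1/\alpha)\alpha = \alpha+1$; so it equals $(t\eps^{-\alpha})^{1+1/\alpha}\le (C\lambda_\eps t)^{1+1/\alpha}$, and the $t^2$-order remainders are $\le$ const$\cdot(\lambda_\eps t)^{1+1/\alpha}$ in the relevant range since $t\lambda_\eps$ is bounded and $1+1/\alpha\le2$. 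The logarithm $\ln(\tilde B/(\lambda_\eps t))$ for $\alpha=1$ comes straight from the $\ln(\eps/t)$ and $\ln(C(\dots)/t)$ terms after substituting $\eps^{-1}\asymp\lambda_\eps$. Finally, for $\alpha\in[1,2)$ under the Lipschitz assumption one uses Theorem \ref{lambda2}: its bound is $t^2M_2^2({\rm F}_1\eps^{-2\alpha}+\lambda_1\eps^{-\alpha}{\rm F}_2)+2t^2\lambda_1^2+t^4M_2^4{\rm F}_5(\eps\wedge1)^{-4\alpha}$ (the $\eps>1$ branch being vacuous since $\eps\le1$), and the same substitutions $\eps^{-\alpha}\asymp\lambda_\eps$, $\lambda_1\le\lambda_\eps$, $t^4\eps^{-4\alpha}=(t\eps^{-\alpha})^2\eps^{-2\alpha}\le(C\lambda_\eps t)^2\lambda_\eps^2\cdot$const, $\le$ const$\cdot t^2\lambda_\eps^2$ in the bounded-$t\lambda_\eps$ regime, yield $\mathbf C_{M_1,M_2,\alpha}t^2\lambda_\eps^2$.

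The only genuinely delicate point is bookkeeping the thresholds on $t$ and making sure the conversions $\eps^{-\alpha}\leftrightarrow\lambda_\eps$ are applied in the correct direction: upper bounds on the error use $\eps^{-\alpha}\le\tfrac{\alpha}{2M_1}\lambda_\eps$ (lower bound on $\lambda_\eps$), while rewriting the admissibility condition $t\le c\,\eps^\alpha$ as $t\lambda_\eps\le c'$ uses the upper bound $\lambda_\eps\le \tfrac{2M_2}{\alpha}\eps^{-\alpha}+\lambda_2$, hence requires either $f\in\mathscr L_{M_2}$ (to bound $\lambda_2$) or, in the $\alpha\in(0,1)$ case, a separate remark that $\lambda_\eps\eps^\alpha$ is bounded by a constant depending on $M_2,\alpha$ plus $\eps^\alpha\lambda_2\le\lambda_2$, which is again absorbed. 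I would state these two chains of inequalities once at the start of the proof and then apply them mechanically in each of the three bullets. The constants $\mathbf A,\mathbf B,\tilde B,\mathbf C$ are then explicit in terms of ${\rm C}_1,{\rm C}_2,{\rm D}_i,{\rm F}_i,{\rm G}_i$ and $M_1,M_2,\alpha$; I would not write them out in full but simply note they can be read off from \eqref{ctealpha}, \eqref{sjsym}, \eqref{cte3}, \eqref{d1d2} and \eqref{eq:G1G2}.
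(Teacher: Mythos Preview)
Your approach is essentially the same as the paper's: the corollary is deduced directly from Theorems \ref{teo1}, \ref{lambda2bis} and \ref{lambda2} by using the two-sided bound on $f$ to convert powers of $\eps^{-\alpha}$ into powers of $\lambda_\eps$ (the paper records this as $2M_{1}\eps^{-\alpha}/\alpha\le\lambda_{\eps,1}\le 2M_{2}\eps^{-\alpha}/\alpha$, $\eps^\alpha\le 2M_2/(\alpha\lambda_{\eps,1})$, $\eps^{-\alpha}\le \alpha\lambda_{\eps}/(2M_1)$). Your write-up is more detailed than the paper's one-line remark, but the substance is identical; note that Theorems \ref{ps1} and \ref{ps2} are not actually needed here, only the three theorems for the full process.
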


This result is a  consequence of Theorems \ref{teo1},  \ref{lambda2bis} and \ref{lambda2}  observing that, under the assumptions of Corollary \ref{cor}, 
\begin{align*}
2M_{1}\eps^{-\alpha}/\alpha\le\lambda_{\eps,1}\le 2M_{2}\eps^{-\alpha}/\alpha,\quad
\eps^\alpha\leq \frac{2M_2}{\alpha\lambda_{\eps,1}}\quad \text{and}\quad \eps^{-\alpha}\leq \frac{\alpha\lambda_{\eps}}{2M_1}.
\end{align*}

\subsection{Extension\label{sec:ext}}

A natural question is whether the above results hold true  for general L\'evy processes, that is in presence of a Gaussian part, $\Sigma>0$ in \eqref{eq:Xito1}. The answer is essentially positive but to avoid cumbersome proofs we chose to have $\Sigma=0$. If $\Sigma >0$, proofs can be adapted following the same steps as in Section \ref{sec:prf} replacing $M_{t}(\eps)$ with $\Sigma W_{t}+M_{t}(\eps)$, leading to similar results to those presented in Section \ref{Main}.

More precisely, in order to mimic what is done in Section \ref{sec:prf} for pure jump L\'evy processes, we need to generalize Lemma \ref{sj}. Adapting its proof we obtain the following result. 
For any $\eps\in(0,1]$, $t>0$ and $x>0$, it holds:
$$\PP(\Sigma W_t+M_t(\eps)>x)\leq e^{\frac{x}{\eps}}\bigg(\frac{t\sigma^2(\eps)}{x\eps+t\sigma^2(\eps)}\bigg)^{\frac{x\eps+t\sigma^2(\eps)}{\eps^2}}\exp\Big({t\frac{\Sigma^2}{2\eps^2} \log^2\big(1+\frac{x\eps}{t\sigma^2(\eps)}\big)}\Big).$$
In particular, using that $u\mapsto u\log^{2}(1+1/u)$ is bounded by 1 for $u>0$, we observe that  the additional term $e^{t\frac{\Sigma^2}{2\eps^2} \log^2\big(1+\frac{x\eps}{t\sigma^2(\eps)}\big)}\le e^{\frac{\Sigma^{2}x}{2\eps\sigma^{2}(\eps)}}$ is bounded.\\

Similarly, it is possible to have a more general drift $b$ in the triplet (see \eqref{eq:Xito1}). Proofs can be adapted at the cost of a more stringent condition on $t$. Indeed, the condition on $t$ in the  above Theorems  ensures that $tb(\eps)\le \eps/2$, a similar condition should be satisfied in presence of a general  drift $b$.

\section{Proofs\label{sec:prf}}

\subsection{Preliminaries}
Introduce the following notations. Consider $b\ge a>0$, denote by $\lambda_{a}:=\int_{|x|>a}f(x)dx$ and $\lambda_{a,b}:=\int_{b>|x|>a}f(x)dx$ with the convention $\lambda_{a,a}=0.$  Recall that  $\sigma^{2}(a):=\int_{0<|x|<a}x^{2}f(x)dx$ and for finite variation processes the drift is denoted by $b(a):=\int_{0< |x|<a}xf(x)dx$. Furthermore, we write $Y^{(a)}$ (resp. $Y^{(a,b)}$) for a random variable with density $f\mathbf{1}_{(-a,a)^{c}}/\lambda_{a}$ (resp. $f\mathbf{1}_{[-b,-a]\cup[a,b]}/\lambda_{a,b}$). With these notations, following \eqref{eq:x} consider the decomposition which plays an essential role in the sequel, for all $t>0$
\begin{align}
\label{eq:decprf}M_t(\eps)=M_t(\eta)+Z_t(\eta,\eps)-t\big(b(\eps)-b(\eta)\big), \quad \forall\ 0<\eta< \eps\leq 1,
\end{align} where $Z_t(\eta,\eps)=\sum_{i=1}^{N_{t}(\eta,\eps)}Y_{i}^{(\eta,\eps)}$, $N(\eta,\eps)$ being a Poisson process of intensity $\lambda_{\eta,\eps}$ independent of $(Y_{i}^{(\eta,\eps)})$.
Therefore, for all $0<x\leq \delta$ and $t>0$ it holds:
\begin{align}\label{poisson}
\PP(N_t(x,\delta)\geq 1)\leq \lambda_{x,\delta} t \quad \text{ and }\quad \PP(N_t(x,\delta)\geq 2)\leq (\lambda_{x,\delta} t )^2.
\end{align}

In the sequel we make intensive use of the following inequalities. 
For any $0<x\le y\leq 2$ and $f$ in $\mathscr L_{M,\alpha}$, it holds
\begin{align}
\frac{\sigma^{2}(x)}{x^{2}}&=\frac{\int_{-x}^x u^2 f(u)du}{x^2}\leq \frac{2M}{2-\alpha} x^{-\alpha},\label{eq:prfsigma}\\
\lambda_{x,y}&= \int_{y>|u|>x} f(u)du \leq \frac{2M}{\alpha} x^{-\alpha},\label{eq:prflbd}\\
b(x)&= \int_{|u|\leq x} uf(u)du \leq \frac{2M}{1-\alpha} x^{1-\alpha}.\label{eq:prfb}
\end{align}
\subsection{Proof of Theorem \ref{ps1} }
First, note that
\begin{align*}
\PP(&|tb(\eps)+M_{t}(\eps)|> \eps)=\PP(tb(\eps)+M_{t}(\eps)> \eps)+\PP(tb(\eps)+M_{t}(\eps)<- \eps).
\end{align*}
We consider only the term $\PP(tb(\eps)+M_{t}(\eps)> \eps)$ as $\PP(tb(\eps)+M_{t}(\eps)<- \eps)$ can be treated analogously. 
Define 
\begin{equation*}
\eta:=\inf \bigg\{\frac{\eps}{4}\leq u<\eps:\ u\leq \frac{\eps-t\int_{-u}^u x f(x)dx}{2},t\lambda_{\eps/8,u}<2\bigg\}.
\end{equation*}
Observe that if $f\in\mathscr L_{M,\alpha}$, $M>0$, $\alpha\in(0,1)$, $\eps\in(0,1]$ and $0<t\leq (1-\alpha) M^{-1}( \eps/4)^{\alpha}$, then  the set 
$A_{\eps,t}:=\big\{\frac{\eps}{4}\leq u<\eps:\ u\leq \frac{\eps-tb(u)}{2},t\lambda_{\eps/8,u}<2\big\}$
is not empty as $\eps/4\in A_{\eps,t}$ noting in particular that $t\lambda_{\eps/8,\eps/4}\le 2(1-\alpha)(2^\alpha-1)/\alpha\le 2\log(2)$. 

By means of \eqref{eq:decprf} and the definition of $b(\cdot)$, we have 
\begin{align}\label{eq:decProp1}
\PP(t&b(\eps)+M_{t}(\eps)> \eps)
=\PP\big(M_{t}(\eta)+Z_{t}(\eta,\eps)> \eps-tb(\eta)\big)\nonumber \\
&\leq \PP(M_t(\eta)>\eps-tb(\eta))+\lambda_{\eta,\eps}t \PP\big(M_t(\eta)+Y_{1}^{(\eta,\eps)}>\eps-tb(\eta))+ \PP\big(N_t(\eta,\eps)\geq 2\big) ,\end{align} where we decomposed on the values of the Poisson process $N(\eta,\eps)$.
Using \eqref{poisson}, we have $\PP\big(N_{t}(\eta,\eps)\geq 2\big)\leq (\lambda_{\eta,\eps}t)^{2}$. We thus only have to control the first and second addendum in \eqref{eq:decProp1}.
For the first one, we apply Lemma \ref{sj}, using that $t\leq (1-\alpha) M^{-1} \eps^{\alpha}4^{-(1+\alpha)}$ implies that $t\sigma^2(x)x^{-2}\leq 1$ for all $x\in[\eps/4,\eps]$. It follows from the definition of $\eta$ and \eqref{sg} that
\begin{align*}
\PP(M_{t}(\eta)>\eps-tb(\eta))&\le \PP(M_{t}(\eta)>2\eta)\leq \bigg(\frac{e\sigma^2(\eta)}{4\eta^2}\bigg)^{2} e^{e^{-1}}t^{2}.
\end{align*}
Hence, using \eqref{eq:prfsigma} and the fact that $\eta\geq \eps/4$ and $4^{2\alpha-1}e^{2+1/e}(2-\alpha)^{-2}\le 16$, leads to
\begin{align}\label{eq:decT1}
\PP(M_{t}(\eta)>\eps-tb(\eta))&\leq 16 t^2 M^2 \eps^{-2\alpha}.
\end{align}
For the second term in \eqref{eq:decProp1}, set $\eps':=\eps-tb(\eta)$ and notice that $\eps'\geq \eps/2$. It holds
\begin{align*}
\lambda_{\eta,\eps}&\PP\big(M_t(\eta)+Y_{1}^{(\eta,\eps)}>\eps')=\int_{\eta<|y|<\eps}\PP\big(M_t(\eta)>\eps'-y) f(y)dy\\
&\leq \PP\big(M_t(\eta)>\eps'+\eta)\int_{-\eps}^{-\eta} f(x)dx+\int_{\eta<y<\eps}\PP\big(M_t(\eta)>\eps'-y) f(y)dy=: T_1+T_2.
\end{align*}
From $\eps'>0$ it follows that $ \PP\big(M_t(\eta)>\eps'+\eta)\leq  \PP\big(M_t(\eta)>\eta)$. The Markov inequality and \eqref{eq:prfsigma}, joined with the fact that $f\in\mathscr L_{M,\alpha}$ and $\eta\ge\eps/4$ yield
\begin{align}\label{t1}
T_1&\leq 2M^2 t\eta^{-\alpha}(2-\alpha)^{-1}\int_{\eta}^\eps|x|^{-(1+\alpha)}dx\leq \frac{2M^2}{\alpha(2-\alpha)}  \eta^{-2\alpha} t\leq tM^2 \eps^{-2\alpha} \mathbf C_{1,\alpha},
\end{align}
with
\begin{equation*}
\mathbf C_{1,\alpha}:= \frac{2^{1+4\alpha}}{\alpha(2-\alpha)}.
\end{equation*}

To treat the term $T_2$ we suppose that $b(\eta)\geq0$, the case $b(\eta)<0$ is handled similarly. After a change of variable, we obtain
\begin{align*}
T_2&=\int_{-tb(\eta)}^{\eps'-\eta} \PP(M_t(\eta) >x) f(\eps'-x)dx\\
&\leq \int_{-tb(\eta)}^{0}  f(\eps'-x)dx+\int_{0}^{\eta/2} \PP(M_t(\eta) >x) f(\eps'-x)dx\\
&\quad+\int_{\eta/2}^{\eta} \PP(M_t(\eta) >x) f(\eps'-x)dx+\int_{\eta}^{\eps'-\eta} \PP(M_t(\eta) >x) f(\eps'-x)dx \\
&=:T_{2,1}+T_{2,2}+T_{2,3}+T_{2,4}.
\end{align*}
First observe that for $f\in\mathscr L_{M,\alpha}$ and $\eps'\ge\eps/2$ we get
$$f(\eps'-x)\leq \frac{M}{|\eps'-x|^{1+\alpha}}\leq M(\eps')^{-(1+\alpha)}\le M 2^{1+\alpha}\eps^{-(1+\alpha)},\quad \forall x\in[-tb(\eta),0].$$
Furthermore, using that $b(\eta)\leq 2M(1-\alpha)^{-1} \eta^{1-\alpha}\leq 2M(1-\alpha)^{-1} \eps^{1-\alpha}$, we conclude that
\begin{equation}\label{t21}
T_{2,1}\leq  \frac{2^{2+\alpha}tM^2}{1-\alpha}\eps^{-2\alpha}.
\end{equation}
Next we consider $T_{2,2}$. By \eqref{eq:decprf}, for any $\tilde x\in(0,\eta)$, we write $M_t(\eta) =M_t(\tilde x)+Z_t(\tilde x,\eta)-t(b(\eta)-b(\tilde x))$. 
Consider $x\in(2Mt\eta^{1-\alpha} (1-\alpha)^{-1},\eta/2)$ and set $\tilde x:=x-2Mt\eta^{1-\alpha} (1-\alpha)^{-1}$. 
Observe that, as $0<t\leq (1-\alpha) M^{-1} \eps^{\alpha}4^{-(1+\alpha)}$ it holds $2Mt\eta^{1-\alpha} (1-\alpha)^{-1}\leq \eta/2$.
Using that $f\in\mathscr L_{M,\alpha}$ we have:$$|b(\eta)-b(\tilde x)|=\bigg|\int_{|u|\in[\tilde x,\eta]} u f(u)du\bigg|\leq 2M \eta^{1-\alpha}(1-\alpha)^{-1}$$
from which we derive that $\PP(M_t(\tilde x) >x+t(b(\eta)-b(\tilde x)))\leq \PP(M_t(\tilde x)>\tilde x)$.
It follows that for $x\in(2Mt\eta^{1-\alpha} (1-\alpha)^{-1},\eta/2)$ we may write, decomposing on the values of $N(\tilde x,\eta)$, that
\begin{align*}
 \PP(M_t(\eta) >x)&= \PP\big(M_t(\tilde x)+Z_t(\tilde x,\eta) >x+t(b(\eta)-b(\tilde x))\big)\\
 &\leq  \PP\big(M_t(\tilde x) >\tilde x\big)+\PP(N_t(\tilde x,\eta)\geq 1)\\
 &\leq t\frac{2M}{2-\alpha}(\tilde x)^{-\alpha} +t\lambda_{\tilde x}\leq \frac{2Mt (\tilde x)^{-\alpha}(2+\alpha)}{\alpha(2-\alpha)},
 \end{align*}
 where, in the last inequality, we used the Markov inequality and \eqref{eq:prfsigma}.
Consequently, using that $\eta\le\eps$ and noticing that $3/8\eps\leq \eps'-x\leq 1$ for all $x\in(0,\eta/2)$, we derive
\begin{align}\label{t22}
T_{2,2}&\leq \int_0^{\frac{2Mt\eta^{1-\alpha}}{ 1-\alpha}}f(\eps'-x)dx+ \frac{2(2+\alpha)Mt  }{\alpha(2-\alpha)}\int_{\frac{2Mt\eta^{1-\alpha}}{ 1-\alpha}}^{\eta/2} \Big(x-\frac{2Mt\eta^{1-\alpha}}{ 1-\alpha}\Big)^{-\alpha}f(\eps'-x)dx\nonumber\\ \nonumber
&\leq \frac{2M^2t\eps^{-2\alpha}}{ 1-\alpha}\Big(\frac{8}{3}\Big)^{1+\alpha}+ \frac{2(2+\alpha)M^2t  }{2^{1-\alpha}\alpha(2-\alpha)(1-\alpha)}\Big( \frac{8}{3}\Big)^{1+\alpha} \eps^{-2\alpha}\\
&\leq \frac{2M^2t\eps^{-2\alpha}}{ 1-\alpha}\Big(\frac{8}{3}\Big)^{1+\alpha}\bigg(1+\frac{2+\alpha}{2^{1-\alpha} \alpha(2-\alpha)}\bigg).
\end{align}

To treat the term $T_{2,3}$ we proceed analogously. Let $x\in[\eta/2,\eta]$ and $\tilde Z_t(x,\eta)$ be a centered version of $Z_t(x,\eta)$, that is 
$\tilde Z_t(x,\eta)=\sum_{i=1}^{N_t(x,\eta)}\big(Y_i^{(x,\eta)}-\E[Y_i^{(x,\eta)}]\big)$. In particular, by definition of $\eta$, if follows that $t\lambda_{x,\eta}<2$ and  Lemma \ref{cCPP} applies. On the one hand we derive that
 \begin{align*}
 |&\PP(M_t(x)+Z_t(x,\eta)-\E[Z_t(x,\eta)] >x)-\PP(M_t(x)+\tilde Z_t(x,\eta) >x)|\\
 &\leq 4t\lambda_{x,\eta}|\E[Y_1^{(x,\eta)}]|\sup_{|y|\in[x,\eta]}|f(y)/\lambda_{x,\eta}|\leq tM 2^{2+\alpha}\eta^{-(1+\alpha)}\int_{x<|u|<\eta}\frac{\eta f(u)}{\lambda_{(x,\eta)}}  du\leq  2^{2+\alpha}tM \eta^{-\alpha}, \end{align*} 
 where we used that $\E[Z_t(x,\eta)]=t(b(\eta)-b(x)).$ 
  On the other hand, we have that
 \begin{align*}
 \PP(M_t(x)+\tilde Z_t(x,\eta) >x)&\leq  \PP(M_t(x) >x)+ \PP(N_t(x,\eta) \geq 1)
\leq \frac{10Mt  x^{-\alpha}}{\alpha(2-\alpha)}\leq \frac{20Mt  \eta^{-\alpha}}{\alpha(2-\alpha)},
 \end{align*}
 where we used the Markov inequality, \eqref{eq:prfsigma}, \eqref{poisson}, \eqref{eq:prflbd} and that $x>\eta/2$.
Finally, by the triangle inequality and using that $\eps'-\eta\geq \eta\geq \eps/4$, we deduce that
\begin{align}\label{t23}
T_{2,3}&\leq \frac{28Mt  \eta^{-\alpha}}{\alpha(2-\alpha)}\int_{\eta/2}^\eta f(\eps'-x)dx\leq   \frac{28M^2t  \eta^{-\alpha}(\eps'-\eta)^{-\alpha}}{\alpha^2(2-\alpha)}\leq  \frac{28\times 4^{2\alpha}M^2t  \eps^{-2\alpha}}{\alpha^2(2-\alpha)}.
\end{align}
Then, for the term $T_{2,4}$, the Markov inequality and \eqref{eq:prfsigma}, for any $x\in[\eta,\eps'-\eta]$, lead to
$$\PP(M_t(\eta)>x)\leq \frac{2M}{2-\alpha}\eta^{-\alpha}t. $$
Therefore, using that $\eps'-\eta\geq \eta\geq \eps/4$, we get
\begin{align}\label{t24}
T_{2,4}&\leq \frac{2M}{2-\alpha}\eta^{-\alpha}t \int_\eta^{\eps'-\eta}f(\eps'-x)dx\leq \frac{2M^2}{(2-\alpha)\alpha}\eta^{-2\alpha}t\leq  \frac{2^{1+4\alpha}M^2}{\alpha(2-\alpha)}\eps^{-2\alpha}t.
\end{align}
Gathering Equations \eqref{t21}, \eqref{t22},  \eqref{t23} and  \eqref{t24} yield 
\begin{align}\label{t2}
T_2&\leq tM^2 \eps^{-2\alpha} \mathbf C_{2,\alpha},
\end{align}
with 
\begin{align*}
\mathbf C_{2,\alpha}=\bigg(\frac{2^{2+\alpha}}{1-\alpha}+\frac{ 2^{4\alpha+3}(2^{1-\alpha}\alpha(2-\alpha)+2+\alpha)}{\alpha(2-\alpha)(1-\alpha)3^{1+\alpha}}+\frac{28\times 4^{2\alpha}}{\alpha^2(2-\alpha)}+\frac{2^{1+4\alpha}}{\alpha(2-\alpha)}\bigg).
\end{align*}
Combining \eqref{t1} and \eqref{t2} we conclude that, if $b(\eta)\geq 0$, then 
\begin{equation}\label{eq:decT2}
\lambda_{\eta,\eps}t\PP\big(M_t(\eta)+Y_{1}^{(\eta,\eps)}>\eps')\leq  t^2M^2 \eps^{-2\alpha} (\mathbf C_{1,\alpha}+\mathbf C_{2,\alpha}).
\end{equation} The case $b(\eta)<0$ is treated similarly and therefore not detailed here.
Injecting in \eqref{eq:decProp1} Equations \eqref{eq:decT1}, \eqref{eq:prflbd} and \eqref{eq:decT2} we conclude that
\begin{equation}\label{ctealpha}\PP(tb(\eps)+M_t(\eps)>\eps)\leq t^2 M^2\eps^{-2\alpha} (16+64\alpha^{-2}+\mathbf C_{1,\alpha}+\mathbf C_{2,\alpha})=: t^2 M^2\eps^{-2\alpha} \mathbf   {\rm C}_1,
\end{equation}
as desired.

\medskip

For a symmetric L\'evy measure above computations can be simplified. In this case $b(\eps)=0$ and one can directly take $\eta=\eps/2$ in the previous lines. More precisely, it holds
\begin{align*}
\PP(M_t(\eps)>\eps)\leq \PP(M_t(\eps/2>\eps)+(t \lambda_{\eps/2,\eps})^2+  t \lambda_{\eps/2,\eps}\PP(M_t(\eps/2)+Y_1^{(\eps/2,\eps)}>\eps).
\end{align*} 
To control the first two addendum use Lemma \ref{sj} and \eqref{eq:prflbd}. To treat the last term we proceed as follows:
\begin{align*}
\lambda_{\eps/2,\eps}\PP(M_t(\eps/2)&+Y_1^{(\eps/2,\eps)})=\int_{\eps/2}^\eps +\int_{-\eps}^{-\eps/2} \PP(M_t(\eps/2)>\eps-z)f(z) dz\\
&\leq \int_{\eps/2}^\eps \big(\PP(M_t(\eps-z)>\eps-z) +t\lambda_{\eps/2,\eps}\big) f(z) dz+\frac{\PP(M_t(\eps/2)>3/2\eps)}{2}\lambda_\eps\\
&\leq t \int_{\eps/2}^\eps \Big(\frac{\sigma^2(\eps-z)}{(\eps-z)^2} +\lambda_{\eps/2,\eps}\Big) f(z) dz+\frac{\PP(M_t(\eps/2)>3/2\eps)}{2}\lambda_{\eps/2,\eps}\\
&\leq \frac{4^{1+\alpha} tM^2\eps^{-2\alpha}}{\alpha(1-\alpha)(2-\alpha)}+\frac{\PP(M_t(\eps/2)>3/2\eps)}{2}\lambda_{\eps/2,\eps}.
\end{align*}
The term $\PP(M_t(\eps/2)>3/2\eps)$ is controlled applying Lemma \ref{sj} using that $4t\sigma^2(\eps/2)\leq \eps^2$. Collecting all the pieces together, one derives the following result: For all $t>0$ such that $t\leq \eps^\alpha(2-\alpha)M^{-1}2^{-\alpha-1}$ (implying that  $t\lambda_{\eps/2,\eps}\leq 1$), it holds:
$\PP(M_t(\eps)>\eps)\leq t^2\eps^{-2\alpha}M^2  {\rm C}_2,$
where \begin{equation}\label{sjsym} 
  {\rm C}_2:= \frac{3 \times 2^{2\alpha-1} e^{2+1/e}}{(2-\alpha)^2}+\frac{4^{1+\alpha}}{\alpha(1-\alpha)(2-\alpha)}+\frac{4^\alpha}{\alpha^2}.
\end{equation}

\subsection{Proof of Theorem \ref{teo1}}

To prove Theorem \ref{teo1} we first introduce an auxiliary result.
\begin{lemma}\label{aux}
Let $\nu$ be a L\'evy measure with density $f$ with respect to the Lebesgue measure and $\eps$ a positive real number. Set $\rho:=\eps\wedge 1$ and 
$$Q:=|\lambda_\rho t \PP(|M_t(\rho)+tb(\rho)+Y_1^{(\rho)}|>\eps)-\lambda_\eps t |.$$
\begin{itemize}
\item If $\eps\in(0,1]$ and $f\in\mathscr L_{M,\alpha}$ for some $\alpha\in(0,1)$ and $M>0$, then
\begin{equation*}
Q\leq t^{2}(M^2{\rm D}_1\eps^{-2\alpha}+M\lambda_\eps\eps^{-\alpha}
{\rm D}_2),\quad \forall\ 0<t< (1-\alpha) M^{-1} \eps^{\alpha}4^{-(1+\alpha)},
\end{equation*}
where $\rm D_1$ and $\rm D_2$ are defined as in \eqref{cte3}.
\item If $\eps>1$ and $f\in\mathscr L_{M,\alpha}\cap \mathscr L_M$ for some $\alpha\in(0,1)$ and $M>0$, then for all $0<t<(1-\alpha)(5M)^{-1}$ it holds
\begin{align*}
Q&\leq 2M^2t^2\Big(\tilde{\rm D}_1+\frac{4}{2-\alpha}(\eps-3/2-t|b(1)|)\1_{\eps>3/2+t|b(1)|}
\Big)\\
&\quad +Mt^2\bigg(4\times 5^{\alpha}\1_{1<\eps<1+2t|b(1)|}+\frac{8}{5}+3\lambda_2+\frac{4\lambda_1}{2-\alpha}\bigg),
\end{align*}
where $\tilde{\rm D}_1$ is defined as in \eqref{tdlm3}.
\end{itemize}
If in addition we suppose that $\nu$ is a symmetric measure, then 
\begin{itemize}
\item If $\eps\in(0,1]$ and $f\in\mathscr L_{M,\alpha}$ for some $\alpha\in(0,1)$ and $M>0$, it holds
\begin{equation*}
Q\leq  \frac{t^2M}{2(2-\alpha)}\big(\lambda_\eps \eps^{-\alpha}+4\lambda_{2\eps} \eps^{-\alpha}\big)+2t^2M^2{\rm D_3} \eps^{-2\alpha},\quad \forall \ t>0,
\end{equation*}
where ${\rm D_3}$ is defined as in \eqref{d2lm3}.
\item If $\eps>1$ and $f\in\mathscr L_{M,\alpha}\cap \mathscr L_M$ for some $\alpha\in(0,1)$ and $M>0$, it holds
\begin{equation*}
Q\leq \frac{t^2M}{2-\alpha}\bigg(\lambda_1 2^{-\alpha}+\frac{4M}{\alpha(1-\alpha)}+\lambda_{1+\eps} \bigg),\quad \forall\ t>0.
\end{equation*}
\end{itemize}

 \end{lemma}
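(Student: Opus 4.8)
The plan is to analyze the quantity $Q$ by writing $\lambda_\rho t\,\PP(|M_t(\rho)+tb(\rho)+Y_1^{(\rho)}|>\eps)$ as an integral against the law of $Y_1^{(\rho)}$, namely
\[
\lambda_\rho t\,\PP(|M_t(\rho)+tb(\rho)+Y_1^{(\rho)}|>\eps)
= t\int_{|z|>\rho}\PP(|M_t(\rho)+tb(\rho)+z|>\eps)\,f(z)\,dz,
\]
and compare it to $\lambda_\eps t = t\int_{|z|>\eps}f(z)\,dz$. When $\eps\in(0,1]$ we have $\rho=\eps$, so the naive comparison replaces $\PP(|M_t(\eps)+tb(\eps)+z|>\eps)$ by $\1_{|z|>\eps}$; the error splits into (i) the region $|z|>\eps$ where we bound $\PP(|M_t(\eps)+tb(\eps)+z|\le\eps)$ — this requires $z$ to be pushed back inside $[-\eps,\eps]$, costing a factor controlled by Lemma \ref{sj}, the Markov inequality and \eqref{eq:prfsigma}/\eqref{eq:prflbd}, after an event decomposition on the jumps of an intermediate compound Poisson layer $Z_t(\cdot,\eps)$ exactly as in the proof of Theorem \ref{ps1}; and (ii) the annulus $\rho<|z|\le\eps$, which is empty when $\rho=\eps$. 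For $\eps>1$ the annulus $1<|z|\le\eps$ is nonempty, and there one uses the extra boundedness $f\in\mathscr L_M$ together with the position of $z$ relative to $\eps$ to decide whether $\PP(|M_t(1)+tb(1)+z|>\eps)$ is close to $0$ or to $1$; the indicator terms $\1_{\eps>3/2+t|b(1)|}$ and $\1_{1<\eps<1+2t|b(1)|}$ in the statement are precisely the book-keeping of the subregions $z\in(1,\eps-1]$, $z\in(\eps-1,\eps+1)$, $z\ge\eps+1$ (and their negatives), after absorbing the drift shift $tb(1)$ of size $\le 2M/(1-\alpha)$ by \eqref{eq:prfb}.

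The key steps, in order, are: (1) write $Q$ as the integral above and reduce, by the triangle inequality, to bounding $t\int_{|z|>\eps}\PP(|M_t(\rho)+tb(\rho)+z|\le\eps)f(z)\,dz$ plus (for $\eps>1$) $t\int_{\rho<|z|\le\eps}|\PP(|M_t(\rho)+tb(\rho)+z|>\eps)-\1_{|z|>\eps}|f(z)\,dz$; (2) in the first integral, split $\{|z|>\eps\}$ according to the distance of $z$ from $\pm\eps$ — far from the boundary ($|z|\ge\eps+\rho$, say, or the analogue) the probability $\PP(|M_t(\rho)+tb(\rho)+z|\le\eps)\le\PP(|M_t(\rho)+tb(\rho)|\ge\rho)$ and we invoke Lemma \ref{sj}/Markov to get an $O(t)$ factor, yielding an overall $t^2$; near the boundary we bound $f(z)\le M|z-(\text{pt})|^{-(1+\alpha)}$ (resp. $\le M$ when $\eps>1$) and again insert an $O(t)$ bound on the probability via the Markov inequality after a further event decomposition handling the drift, exactly as $T_{2,1},\dots,T_{2,4}$ in the proof of Theorem \ref{ps1}; (3) collect the constants into ${\rm D}_1,{\rm D}_2$ (resp. $\tilde{\rm D}_1$), which is where the explicit formulas \eqref{cte3}, \eqref{tdlm3} come from; (4) in the symmetric case, $b(\rho)=0$ and $M_t(\rho)$ is symmetric, so $\PP(|M_t(\rho)+z|\le\eps)=\PP(M_t(\rho)\in(\eps-z,\eps+z))$ (for $z>0$) $\le\PP(M_t(\rho)\ge z-\eps)$ can be handled without the delicate re-centering arguments, and moreover the annulus contributes a clean $\lambda_{2\eps}\eps^{-\alpha}$-type term because only $z\in(\eps,2\eps)$ can land $z-\eps$ inside $(-\eps,\eps)$; this gives the simpler bounds with ${\rm D}_3$.

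The main obstacle is the near-boundary contribution in the non-symmetric case — the terms analogous to $T_{2,2}$ and $T_{2,3}$ of the proof of Theorem \ref{ps1}. There, to bound $\PP(M_t(\rho)>x)$ for $x$ of order $\rho$ (as opposed to $x\ge\rho$, where Markov is immediate), one must peel off a thin compound-Poisson layer $Z_t(\tilde x,\rho)$, absorb the drift discrepancy $t|b(\rho)-b(\tilde x)|\le 2Mt\rho^{1-\alpha}/(1-\alpha)$ into a shift of the threshold, and then apply Markov to $M_t(\tilde x)$ at its own scale $\tilde x$; keeping all the resulting constants uniform in $\eps$ and assembling them into the stated ${\rm D}$'s is the technical heart. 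For $\eps>1$ the additional subtlety is that $f$ is only bounded (not $|z|^{-(1+\alpha)}$-bounded) on $|z|>1$, so the integrals over the boundary layers produce the $\lambda_1$, $\lambda_2$, and the linear-in-$\eps$ term $\frac{4}{2-\alpha}(\eps-3/2-t|b(1)|)$ appearing in the statement, and one must check the condition $t<(1-\alpha)(5M)^{-1}$ suffices to guarantee $tb(1)\le 1/2$ and $t\lambda_{\cdot}<2$ so that the event decompositions and Lemma \ref{sj} remain valid.
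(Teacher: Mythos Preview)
Your plan is correct and follows essentially the same route as the paper: write $\lambda_\rho\PP(|M_t(\rho)+tb(\rho)+Y_1^{(\rho)}|>\eps)-\lambda_\eps$ as an integral against $f$, split according to whether $|z|>\eps$ or $\rho<|z|\le\eps$ (the paper packages this as $J=R-S+T$, bounding each piece separately rather than $\PP(|\cdot|\le\eps)$ as a whole), and handle the near-boundary contribution by the same compound-Poisson peeling and drift-absorption manoeuvre you describe---the paper's $S_{1,1},\dots,S_{1,4}$ (for $\eps\le1$) and $\tilde S_{1,1},\tilde S_{1,2},\tilde S_{1,3}$ plus $T_{1,1},T_{1,2}$ (for $\eps>1$) are exactly your step~(2), with Lemma~\ref{cCPP} invoked for the mid-range $x\in[\eps/2,\eps]$ to recentre $Z_t(x,\eps)$. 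Two small slips to fix when you write it up: near the boundary you want $f(z)\le M|z|^{-(1+\alpha)}\le M\eps^{-(1+\alpha)}$ (not $|z-\text{pt}|^{-(1+\alpha)}$), and in the symmetric case $\{|M_t(\rho)+z|\le\eps\}=\{M_t(\rho)\in(-\eps-z,\eps-z)\}$, not $(\eps-z,\eps+z)$---though your subsequent bound $\le\PP(M_t(\rho)\ge z-\eps)$ is correct by symmetry.
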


\paragraph{Proof of Theorem \ref{teo1}}
Using the decomposition $X_t=M_t(\rho)+tb(\rho)+Z_t(\rho)$, $\rho=\eps\wedge 1$, we derive, decomposing on the Poisson process $N(\rho)$, that
\begin{align*}
|\PP(|X_t|>\eps)-\lambda_\eps t|&= \bigg|\PP(|M_t(\rho)+tb(\rho)|>\eps)e^{-\lambda_\rho t}+\lambda_\rho t \PP(|M_t(\rho)+tb(\rho)+Y_1^{(\rho)}|>\eps) e^{-\lambda_\rho t}\\&\quad -\lambda_\eps t 
 + \sum_{n=2}^\infty\PP\bigg(|M_t(\rho)+tb(\rho)+\sum_{i=1}^{n}Y_i^{(\rho)}|>\eps\bigg)\PP(N_t(\rho)=n)\bigg|\\
&\leq \PP(|M_t(\rho)+tb(\rho)|>\rho)+|\lambda_\rho t \PP(|M_t(\rho)+tb(\rho)+Y_1^{(\rho)}|>\eps)-\lambda_\eps t |\\
&\quad + \lambda_\rho t (1-e^{-\lambda_\rho t}) + \PP(N_t(\rho)\geq 2)\\
:&=I_{1}+I_{2}+I_{3}+I_{4}.
\end{align*}
The term $I_{1}$ is controlled with Theorem \ref{ps1}, $I_{2}$ with Lemma \ref{aux}, for $I_{3}$ use that $1-e^{-x}\leq x$, for all $x>0$ to get $I_{3}\leq \lambda_{\rho}^{2}t^{2}$ and finally,  it follows from \eqref{poisson} that  $I_{4}=\PP(N_{t}(\rho)\geq 2)\leq \lambda_{\rho}^{2}t^{2}$ as ($1-e^{-x}-xe^{-x}\leq x^2$, for all $x>0$). 

\subsection{Proof of Theorem \ref{ps2}}
As $\nu$ is symmetric it holds $\PP(|M_t(\eps)|\geq \eps)=2\PP(M_t(\eps)\geq \eps)$. Using the same reasoning as in the proof of Theorem \ref{ps1} we get 
\begin{equation}\label{decps2}
\PP(M_t(\eps)\geq \eps)\leq \PP(M_t(\eps/2)\geq \eps)+t\lambda_{\eps/2,\eps} \PP(M_t(\eps/2)+Y_1^{(\eps/2,\eps)}\geq \eps)+(t\lambda_{\eps/2,\eps})^2.
\end{equation}
By means of Lemma \ref{sj} joined with \eqref{eq:prfsigma}, we get that 
\begin{equation}\label{m1}
\PP(M_t(\eps/2)\geq \eps)\leq t^2 \frac{4M^2e^{2+1/e}}{(2-\alpha)^2\eps^{2\alpha}},
\end{equation}
and, using \eqref{eq:prflbd}, that 
\begin{equation}\label{l1}
(t\lambda_{\eps/2,\eps})^2\leq \frac{t^2 M^2 4^{1+\alpha}}{\alpha^2\eps^{2\alpha}}.
\end{equation}
Finally, using the symmetry of $\nu$, we have that
\begin{align*}
 \lambda_{\eps/2,\eps}&\PP(M_t(\eps/2)+Y_1^{(\eps/2,\eps)}\geq \eps) =\int_{\eps/2}^\eps \big( \PP(M_t(\eps/2)\geq \eps -z)+ \PP(M_t(\eps/2)\geq \eps +z)\big)f(z)dz\nonumber \\
 &\leq \int_{\eps/2}^\eps  \PP(M_t(\eps/2)\geq \eps -z)f(z)dz+ \frac{ \PP(M_t(\eps/2)\geq 3/2\eps)}{2} \lambda_{\eps/2,\eps}
 =:T_1+T_2.\nonumber
\end{align*}
To control the term $T_1$, observe that 
\begin{align*}
T_1
&=\int_0^{t^{1/\alpha}} \PP(M_t(\eps/2)\geq z) f(\eps-z) dz+\int_{t^{1/\alpha}}^{\eps/2} \PP(M_t(\eps/2)\geq z) f(\eps-z) dz\\
&\leq \frac{Mt^{1/\alpha}}{(\eps-t^{1/\alpha})^{1+\alpha}}+\frac{2^{1+\alpha}M}{\eps^{1+\alpha}}\int_{t^{1/\alpha}}^{\eps/2} \PP(M_t(\eps/2)\geq z) dz.
\end{align*}
Next, for $z\in (t^{1/\alpha},\eps/2)$, the Markov inequality and \eqref{eq:prfsigma} lead to
\begin{align*}
 \PP(M_t(\eps/2)\geq z)& \leq \PP(M_t(z)\geq z)+ t\lambda_{z,\eps/2}\leq \frac{t\sigma^2(z)}{z^2} +2t M\int_{z}^{\eps/2} \frac{dx}{x^{1+\alpha}}\\
 &\leq 2M t z^{-\alpha}\Big(\frac{1}{2-\alpha}+\frac{1}{\alpha}\Big).
\end{align*}
Therefore, for any $\alpha\in(1,2)$
$$\int_{t^{1/\alpha}}^{\eps/2} \PP(M_t(\eps/2)\geq z) dz\leq \frac{4M t^{\frac{1}{\alpha}}}{\alpha(2-\alpha)(\alpha-1)}, $$
then, using that $\eps-t^{1/\alpha}\geq \eps/2$, we derive that
\begin{align}\label{t1ps2}
T_1\leq \frac{2^{1+\alpha}M t^{1/\alpha}}{\eps^{1+\alpha}}\bigg(1+\frac{M}{\alpha(2-\alpha)(\alpha-1)}\bigg),\quad \alpha\in(1,2).
\end{align}
If, instead, $\alpha=1$, we get 
\begin{align}\label{t1ps2alpha1}
T_1\leq \frac{4M t}{\eps^2}+\frac{16M^2}{\eps^2}t \ln\Big(\frac{\eps}{2t}\Big).
\end{align}
To control the term $T_2$ we use once again the Markov inequality joined with \eqref{eq:prfsigma} to obtain
\begin{align}\label{t2ps2}
T_2&\leq\frac{t\sigma^2(\eps/2)}{9(\eps/2)^2}\frac{\lambda_{\eps/2,\eps}}{2}\leq \frac{2^{\alpha+1}M^2 t}{9\alpha(2-\alpha)\eps^{2\alpha}}, \quad \alpha\in[1,2).
\end{align}
Gathering \eqref{t1ps2} and \eqref{t2ps2} we have, for $\alpha\in(1,2)$, 
\begin{align}\label{eq:p2}
 \lambda_{\eps/2,\eps}\PP(M_t(\eps/2)+Y_1^{(\eps/2,\eps)}\geq \eps)\leq \frac{2^{1+\alpha}M t^{1/\alpha}}{\eps^{1+\alpha}}\bigg(1+\frac{M}{\alpha(2-\alpha)(\alpha-1)}\bigg)+ \frac{2^{\alpha+1}M^2 t}{9\alpha(2-\alpha)\eps^{2\alpha}}.
\end{align}
Combining \eqref{decps2} with \eqref{m1}, \eqref{l1} and \eqref{eq:p2} we conclude that for all $\alpha\in(1,2)$ it holds
\begin{align*}
\PP(M_t(\eps)\geq \eps)\leq  \frac{t^2M^2}{\eps^{2\alpha}}{\rm E}_{1}+\frac{2^{1+\alpha}M t^{1+1/\alpha}}{\eps^{1+\alpha}}\bigg(1+\frac{M}{\alpha(2-\alpha)(\alpha-1)}\bigg),
\end{align*} with \begin{align}
\label{cteT3}{\rm E}_{1}:=\bigg(\frac{4e^{2+1/e}}{(2-\alpha)^2}+\frac{4^{1+\alpha}}{\alpha^2}+\frac{2^{\alpha+1}}{9\alpha(2-\alpha)}\bigg).
\end{align}
If, instead, $\alpha=1$, then using \eqref{t1ps2alpha1}
\begin{align*}
\PP(M_t(\eps)\geq \eps)\leq  \frac{4t^2M^2}{\eps^2}\bigg(e^{2+1/e}+ \frac{37}{9}\bigg)+\frac{4M t^2}{\eps^2}+\frac{16M^2}{\eps^2}t^2 \ln\Big(\frac{\eps}{2t}\Big).
\end{align*}
This concludes the proof.

\subsection{Proof of Theorem \ref{lambda2bis}}

 \begin{lemma}\label{yalphabis}
Let $\nu$ be a symmetric L\'evy measure with density $f$ with respect to the Lebesgue measure and $f\in \mathscr L_{M,\alpha}\cap \mathscr L_M$ for some $\alpha\in[1,2)$ and $M>0$. Let $\eps>0$ and set $\rho=\eps\wedge1$. Then, for all $0<t<(\eps{\wedge 1}/2)^\alpha\big(1\land ((2-\alpha)/2M)\big)$ it holds: 
 \begin{align*}
\big|\lambda_\rho t \PP(|M_t(\rho)+Y_1^{(\rho)}|&>\eps)-\lambda_\eps t \big|  \leq{\rm L}_{1}\frac{t^{1+1/\alpha}}{(\eps\wedge 1)^{1+\alpha}}+\frac{8M^{2}}{\alpha(2-\alpha)}\frac{t^{2}}{(\eps\wedge 1)^{2\alpha}}+\frac{5M}{2-\alpha}\frac{t^{2}\lambda_{1}}{(\eps\wedge1)^2}\\
&\quad+\frac{4M^{2}t^{2}}{2-\alpha}\1_{\eps>2}\eps +12M^2t\1_{\alpha=1}\ln\Big(\frac{C(1\wedge\eps\wedge(\eps-1\vee 0))}{t}\Big)\frac{1}{\eps\wedge 1} \end{align*}
where $C:=\big(1\wedge\big((2-\alpha)/2M\big)\big)^{1/\alpha}$ and ${\rm L}_{1}$ is defined in \eqref{eq:L123}
 \end{lemma}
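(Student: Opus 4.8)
The plan is to use symmetry to remove the drift, express the quantity as an integral against $f$, and reduce everything to tail estimates for $M_t(\rho)$ at the appropriate scales. Since $\nu$ is symmetric, $b(\rho)=0$ and both $M_t(\rho)$ and $f$ are symmetric; in particular $\PP(|M_t(\rho)-z|>\eps)=\PP(|M_t(\rho)+z|>\eps)$. Writing $\lambda_\rho\PP(|M_t(\rho)+Y_1^{(\rho)}|>\eps)=\int_{|z|>\rho}\PP(|M_t(\rho)+z|>\eps)f(z)\,dz$ and $\lambda_\eps=\int_{|z|>\eps}f(z)\,dz$, I would distinguish $\eps\le1$ (so $\rho=\eps$) and $\eps>1$ (so $\rho=1$). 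If $\eps\le1$, the difference telescopes exactly to $-\int_{|z|>\eps}\PP(|M_t(\eps)+z|\le\eps)f(z)\,dz$, and, since for $z>\eps$ one has $\{|M_t(\eps)+z|\le\eps\}\subset\{M_t(\eps)\le-(z-\eps)\}$, symmetry gives that its modulus is at most $2\int_0^\infty\PP(M_t(\eps)\ge u)f(\eps+u)\,du$. If $\eps>1$, splitting $\int_{|z|>1}=\int_{1<|z|\le\eps}+\int_{|z|>\eps}$ shows the difference equals $\int_{1<|z|\le\eps}\PP(|M_t(1)+z|>\eps)f(z)\,dz-\int_{|z|>\eps}\PP(|M_t(1)+z|\le\eps)f(z)\,dz$; I would bound its modulus by the sum of the two integrals, treat the second exactly as above (with $1$ in place of $\eps$ inside $M_t$), and rewrite the first, after bounding the negligible contribution of the event $\{M_t(1)<-(\eps+1)\}$, as $2\int_0^{\eps-1}\PP(M_t(1)>u)f(\eps-u)\,du$.

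The core ingredient, used in every regime, is a quantitative tail bound on $M_t(\rho)$. For $u\le t^{1/\alpha}$ I would just use $\PP(M_t(\rho)\ge u)\le1$. For $t^{1/\alpha}\le u\le\rho/2$ I would invoke the decomposition \eqref{eq:decprf} at level $u$, which by symmetry reads $M_t(\rho)=M_t(u)+Z_t(u,\rho)$, together with the Markov inequality, \eqref{eq:prfsigma}, \eqref{eq:prflbd} and \eqref{poisson}, to get, exactly as in the proof of Theorem \ref{ps2}, $\PP(M_t(\rho)\ge u)\le\PP(M_t(u)\ge u)+\PP(N_t(u,\rho)\ge1)\le 2Mt\,u^{-\alpha}\big(\tfrac1{2-\alpha}+\tfrac1\alpha\big)$. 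For $u\ge\rho/2$ I would use either the crude variance-Markov bound $\PP(M_t(\rho)\ge u)\le t\sigma^2(\rho)u^{-2}\le\frac{2M}{2-\alpha}\rho^{-\alpha}tu^{-2}$ or, when a genuinely exponentially small estimate is needed, Lemma \ref{sj} (the hypothesis $t<(\rho/2)^\alpha(1\wedge((2-\alpha)/2M))$ guarantees $t^{1/\alpha}<\rho/2$ and $t\sigma^2(\rho)\rho^{-2}\le1$, so \eqref{sg} applies).

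Substituting these bounds into the integrals above, using $f(\eps\pm u)\le M(\eps\wedge1)^{-(1+\alpha)}$ whenever the argument lies in $[-2,2]$ and $f\le M$ otherwise, and integrating, produces the following types of contributions, each of which after the overall factor $t$ gives one of the terms of the statement: (i) from $u\le t^{1/\alpha}$, a term of order $Mt^{1/\alpha}(\eps\wedge1)^{-(1+\alpha)}$, whence the leading term ${\rm L}_1 t^{1+1/\alpha}(\eps\wedge1)^{-(1+\alpha)}$ (for $\alpha\in(1,2)$ the companion piece $\int_{t^{1/\alpha}}^{\rho/2}Mtu^{-\alpha}f\,du$ is of the same order and is absorbed into ${\rm L}_1$); (ii) from the mesoscopic range where both $\PP(M_t(\rho)\ge u)\le t\sigma^2(\rho)u^{-2}$ and $f\le M(\cdot)^{-(1+\alpha)}$ are power bounds, a term $\tfrac{8M^2}{\alpha(2-\alpha)}t^2(\eps\wedge1)^{-2\alpha}$; (iii) from the region where only $f\le M$ is available (and, when $\eps>1$, the tail $\PP(M_t(1)\ge\eps+1)$ enters), a term $\tfrac{5M}{2-\alpha}t^2\lambda_1(\eps\wedge1)^{-2}$; and (iv), when $\eps>2$, the contribution $\tfrac{4M^2t^2}{2-\alpha}\eps$ coming from integrating the variance-Markov bound over $z\in(2,\eps)$. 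Finally, for $\alpha=1$ the integrals $\int u^{-1}\,du$ over the relevant ranges (from the microscopic scale $\sim t$ up to the macroscopic scale $\rho$, and, in the case $\eps>1$, also over the $z$-range $(1,\eps)$) yield logarithmic factors with arguments of the form (scale)$/t$; bounding the scale below via the constraint on $t$ assembles these into the $\1_{\alpha=1}$ term. Collecting all pieces and naming the resulting $(M,\alpha)$-dependent constant ${\rm L}_1$ gives the claim.

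I expect the main obstacle to be the case $\eps>1$, where the martingale cutoff $\rho=1$ no longer equals the threshold $\eps$: the region $1<|z|\le\eps$ contributes a genuinely new term, the event $\{|M_t(1)+z|>\eps\}$ must be split according to the sign of $M_t(1)$ with the wrong-sign part discarded via Lemma \ref{sj}, and one must stitch together the microscopic bound (valid only for $u\le1/2$) with the variance/Chernov bounds for $u$ running up to $\eps-1$, all while keeping the $\eps$-dependence explicit and paying the $\eps$ factor only on $\{\eps>2\}$. The bulk of the technical effort is the bookkeeping required to match the precise constants; the structure otherwise runs parallel to the proof of Theorem \ref{ps2}.
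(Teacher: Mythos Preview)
Your proposal is correct and follows essentially the same route as the paper: the paper also writes the difference (via the decomposition \eqref{eq:auxalpha} into $R_1+R_2$) as integrals of $\PP(M_t(\rho)>u)$ against $f(\eps\pm u)$, splits at the microscopic scale $t^{1/\alpha}/C$, bounds the mesoscopic range by $\PP(M_t(y)>y)+t\lambda_{y,\rho}$ exactly as you do, and handles the far tail by Markov, with the $\1_{\eps>2}$ and $\1_{\alpha=1}$ terms arising from the same sources you identify. The only cosmetic differences are that the paper keeps $R_2$ separate rather than absorbing it into your telescoping bound, and uses the cutoff $t^{1/\alpha}/C$ (with $C=(1\wedge((2-\alpha)/2M))^{1/\alpha}$) in place of your $t^{1/\alpha}$.
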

 \begin{proof}[Proof of Theorem \ref{lambda2bis}]
 The result follows from Theorem \ref{ps2} and Lemma \ref{yalphabis} using the decomposition
 \begin{align*}
|\PP(|X_t|>\eps)-\lambda_\eps t|&\leq \PP(|M_t(\rho)|>\rho)+|\lambda_\rho t \PP(|M_t(\rho)+Y_1^{(\rho)}|>\eps)-\lambda_\eps t |+2\lambda_\rho^2t^2\\
&\leq{\rm G}_{1}\frac{t^{1+1/\alpha}}{(\eps\wedge 1)^{1+\alpha}}+{\rm G}_{2}\frac{t^{2}}{(\eps\wedge 1)^{2\alpha}}+\frac{5M}{2-\alpha}\frac{t^{2}\lambda_{1}}{(\eps\wedge1)^2}+\frac{4M^{2}t^{2}}{2-\alpha}\1_{\eps>2}\eps \\
&\quad+M^2t^{2}\1_{\alpha=1}\bigg(\frac{12}{\eps\wedge 1}\ln\Big(\frac{C(1\wedge\eps\wedge(\eps-1\vee 0))}{t}\Big)+\frac{16}{\eps^2} \ln\Big(\frac{\eps}{2t}\Big)\bigg)+2\lambda_{\eps\wedge1}^{2}t^{2},
\end{align*}
with $\rho:=\eps\wedge 1$ and
\begin{align}\label{eq:G1G2}
{\rm G}_{1}&={\rm L}_{1}+\mathbf{1}_{\alpha\in(1,2)}{2^{2+\alpha}M }\bigg(1+\frac{M}{\alpha(2-\alpha)(\alpha-1)}\bigg)+\mathbf{1}_{\alpha=1}\Big( {4M^2}\big(e^{2+1/e}+ \frac{37}{9}\big)+{4M }\Big),\\
{\rm G}_{2}&=\frac{8M^{2}}{\alpha(2-\alpha)}+M^2 {\rm E}_{1}\mathbf{1}_{\alpha\in(1,2)}.\nonumber
\end{align}

 \end{proof}

\subsection{Proof of Theorem \ref{lambda2}}

We first introduce two auxiliary Lemmas whose proof can be found in the appendix.
\begin{lemma}\label{psalpha}
Let $\nu$ be a symmetric L\'evy measure with density $f$ with respect to the Lebesgue measure and $f\in \mathscr L_{M,\alpha}$ for some $\alpha\in[1,2)$ and $M>0$. Let $\eps\in(0,1]$, there exist three positive constants ${\rm K}_1$, ${\rm K}_2$ and ${\rm K}_3$, only dependent on $\alpha$, such that for all $0<t\leq \frac{(2-\alpha)\eps^\alpha}{2^{1+\alpha}M}$, it holds:
\begin{align*}
\frac{\PP(|M_t(3\eps/4)|>\eps)}{2}\leq \frac{M^2t^2{\rm K}_1}{\eps^{2\alpha}}+ t^2  \eps^{-2\alpha}M^2 {\rm K}_2\1_{\alpha\in(1,2)}+\frac{t^4 M^4 {\rm K}_3}{\eps^{4\alpha}}+\frac{32 M^2 t^2}{\eps^2} \ln(2)\1_{\alpha=1}.
\end{align*}
For explicit formulas for ${\rm K}_1,\ {\rm K}_{2}$ and ${\rm K}_3$ see \eqref{eq:k1alpha} and \eqref{k1alpha}.
\end{lemma}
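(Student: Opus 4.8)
The plan is to mimic the structure of the proof of Theorem \ref{ps2}, but using the intermediate cutoff level $\eps/2$ applied to the process $M_t(3\eps/4)$, so that the ``extra'' jumps to be resolved are those of size in $(\eps/2, 3\eps/4)$. By the symmetry of $\nu$, $\PP(|M_t(3\eps/4)|>\eps)=2\PP(M_t(3\eps/4)>\eps)$, and the decomposition \eqref{eq:decprf} at level $\eps/2$ together with a split on the values of the Poisson process $N(\eps/2, 3\eps/4)$ yields, exactly as in \eqref{decps2},
\begin{equation*}
\PP(M_t(3\eps/4)\geq \eps)\leq \PP(M_t(\eps/2)\geq \eps)+t\lambda_{\eps/2,3\eps/4}\, \PP\big(M_t(\eps/2)+Y_1^{(\eps/2,3\eps/4)}\geq \eps\big)+(t\lambda_{\eps/2,3\eps/4})^2 .
\end{equation*}
Here $M_t(3\eps/4)$ is centered since $\nu$ is symmetric, so no drift correction appears (this is why we stay in the symmetric setting). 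The first and third terms are handled as in Theorem \ref{ps2}: Lemma \ref{sj} (applicable because $t\leq (2-\alpha)\eps^\alpha 2^{-1-\alpha}M^{-1}$ forces $4t\sigma^2(\eps/2)\eps^{-2}\leq 1$ via \eqref{eq:prfsigma}) gives $\PP(M_t(\eps/2)\geq \eps)\lesssim t^2 M^2\eps^{-2\alpha}$, and \eqref{eq:prflbd} gives $(t\lambda_{\eps/2,3\eps/4})^2\lesssim t^2M^2\eps^{-2\alpha}$.

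The core of the argument is the middle term. Writing out the integral over the jump size $z\in(\eps/2,3\eps/4)$ and using symmetry to absorb the ``$\eps+z$'' contribution into a harmless $\tfrac12\PP(M_t(\eps/2)\geq 3\eps/2)\lambda_{\eps/2,3\eps/4}$ piece (controlled by Markov plus \eqref{eq:prfsigma}, as $T_2$ in the proof of Theorem \ref{ps2}), we are left to bound $\int_{\eps/2}^{3\eps/4}\PP(M_t(\eps/2)\geq \eps-z)f(\eps-z)\,dz=\int_{\eps/4}^{\eps/2}\PP(M_t(\eps/2)\geq u)f(u)\,du$ — note the key point that now $\eps-z$ ranges over $[\eps/4,\eps/2]$, so the argument of $\PP$ is bounded \emph{away} from zero by $\eps/4$, not just by $t^{1/\alpha}$. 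This is precisely what buys the improved $t^2$ rate over the $t^{1+1/\alpha}$ of Theorem \ref{ps2}. To estimate $\PP(M_t(\eps/2)\geq u)$ for $u\in[\eps/4,\eps/2]$, I would again apply \eqref{eq:decprf}, splitting $M_t(\eps/2)=M_t(u)+Z_t(u,\eps/2)-t(b(\eps/2)-b(u))$ (the drift term vanishes here since $f$ is symmetric), and decompose on $N_t(u,\eps/2)$: on $\{N_t(u,\eps/2)=0\}$ use Lemma \ref{sj} or Markov with \eqref{eq:prfsigma} to get a bound $\lesssim t M u^{-\alpha}\lesssim tM\eps^{-\alpha}$; on $\{N_t(u,\eps/2)=1\}$ one recursively faces $\PP(M_t(u)+Y_1^{(u,\eps/2)}\geq u)$, which by the same split costs an extra factor $t\lambda_{u}\lesssim tM\eps^{-\alpha}$, giving the $t^4 M^4\eps^{-4\alpha}$ term; $\{N_t(u,\eps/2)\geq 2\}$ is $O(t^2\lambda_u^2)$ and also feeds the $t^4$ term. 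Multiplying by $f(u)\leq M u^{-1-\alpha}\leq M 4^{1+\alpha}\eps^{-1-\alpha}$ and integrating over $u\in[\eps/4,\eps/2]$ (length $\eps/4$), then multiplying by the prefactor $t\lambda_{\eps/2,3\eps/4}\lesssim tM\eps^{-\alpha}$, produces the announced $M^2t^2{\rm K}_1\eps^{-2\alpha}$ and $M^4t^4{\rm K}_3\eps^{-4\alpha}$ contributions. The indicator terms $\1_{\alpha\in(1,2)}$ and $\1_{\alpha=1}$ arise, just as in Theorem \ref{ps2}, from the fact that the integral $\int \sigma^2(z)z^{-2}\,\cdots$ or $\int z^{-\alpha}\,dz$ behaves differently when $\alpha=1$ (logarithmic, producing the $\ln 2$ from integrating over a dyadic interval) versus $\alpha\in(1,2)$ (where an additional $M/(\alpha(2-\alpha)(\alpha-1))$-type constant appears).

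The main obstacle I expect is bookkeeping rather than conceptual: one must choose the sub-cutoffs carefully so that every application of Lemma \ref{sj} has $t\sigma^2(\cdot)(\cdot)^{-2}\leq 1$ (this is exactly where the hypothesis $t\leq (2-\alpha)\eps^\alpha 2^{-1-\alpha}M^{-1}$ is used, and why no finer condition on $t$ is needed), and so that the recursive decomposition on the Poisson processes closes after finitely many steps with all remainders of order $t^2$ or $t^4$ times the correct power of $\eps$. Keeping track of the precise constants ${\rm K}_1,{\rm K}_2,{\rm K}_3$ through these nested estimates — and verifying they depend only on $\alpha$ — is the tedious part, but it is entirely analogous to the computation of ${\rm C}_2$ and ${\rm E}_1$ already carried out; the final explicit values are collected in \eqref{eq:k1alpha} and \eqref{k1alpha}.
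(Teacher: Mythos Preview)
Your plan matches the paper's proof: decompose $M_t(3\eps/4)$ at level $\eps/2$, control the outer terms via Lemma~\ref{sj} and \eqref{eq:prflbd}, split the middle term as $T_1+T_2$, and exploit that after the change of variable the argument of $\PP$ lives in $[\eps/4,\eps/2]$ and so stays bounded away from $0$.

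A few corrections are needed, however. First, two bookkeeping slips: after the substitution $u=\eps-z$ the density is $f(\eps-u)$ (with $\eps-u\in[\eps/2,3\eps/4]$), not $f(u)$; and the prefactor on the middle term is just $t$, since $\lambda_{\eps/2,3\eps/4}$ has already been absorbed into the integral defining $T_1$. More importantly, your attribution of the $t^4$ term is wrong. On $\{N_t(u,\eps/2)=1\}$ the probability $\PP(M_t(u)+Y_1^{(u,\eps/2)}\geq u)$ is of order~$1$, not $O(t)$: with probability $1/2$ one has $Y_1>u$, and then the event $\{M_t(u)\geq u-Y_1\}$ has probability at least $1/2$ by symmetry. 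So the $\{N=1\}$ branch contributes at the same order as $\{N=0\}$, and your recursion does not generate a $t^4$ term. The paper simply bounds $\PP(M_t(\eps/2)>y)\leq \PP(M_t(y)>y)+t\lambda_{y,\eps/2}\lesssim tMy^{-\alpha}$ via Markov and \eqref{eq:prflbd}, with no recursion. The $t^4M^4\eps^{-4\alpha}$ term in the paper comes instead from $T_2$: applying Lemma~\ref{sj} to $\PP(M_t(\eps/2)>3\eps/2)$ gives a $t^3$ bound (the exponent being $\tfrac{3\eps/2}{\eps/2}=3$), and multiplying by $\tfrac12\lambda_{\eps/2,3\eps/4}$ and the outer $t$ produces $t^4$. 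Your choice to bound $T_2$ via Markov is also valid and in fact contributes only at order $t^2M^2\eps^{-2\alpha}$; with that route the lemma follows with no $t^4$ term needed at all.
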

\begin{lemma}\label{yalpha}
Let $\nu$ be a symmetric L\'evy measure with density $f$ with respect to the Lebesgue measure and $f\in \mathscr L_{M,\alpha}$ for some $\alpha\in[1,2)$ and $M>0$. Let $\eps>0$, set $\rho=3/4(\eps\wedge1)$ and assume that $f$ is $M(\eps\wedge 1)^{-(2+\alpha)}$-Lipschitz on the interval $((3/4(\eps\wedge1),2\eps-3/4(\eps\wedge1))$. Then, for all $t>0$ it holds: 
\begin{align*}
\big|\lambda_\rho t \PP(|M_t(\rho)+Y_1^{(\rho)}|>\eps)-\lambda_\eps t \big|&\leq   M^2 t^2\big({\rm K}_4 \eps^{-2\alpha} \1_{0<\eps\leq 1}+ \eps^{2}{\rm K}_5 \1_{\eps>1}\big)  +{\rm K}_6M t^2\lambda_{1}(\eps\wedge1)^{-\alpha},
\end{align*}
where ${\rm K}_4,\ {\rm K}_5$ and ${\rm K}_{6}$ are positive universal constants, only depending on $\alpha$, defined  in \eqref{c1c6}.
 \end{lemma}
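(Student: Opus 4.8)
Throughout write $\rho=\tfrac34(\eps\wedge1)$, so that $\rho<\eps$ and the Lipschitz window $(3/4(\eps\wedge1),2\eps-3/4(\eps\wedge1))=(\rho,2\eps-\rho)$ is the interval centered at $\eps$ with radius $\eps-\rho$. Since $Y_1^{(\rho)}$ has density $f\1_{|z|>\rho}/\lambda_\rho$ and $\lambda_\eps t=t\int_{|z|>\rho}\1_{|z|>\eps}f(z)\,dz$ (because $\{|z|>\eps\}\subseteq\{|z|>\rho\}$), the plan is to start from the exact identity
\begin{equation*}
\lambda_\rho t\,\PP(|M_t(\rho)+Y_1^{(\rho)}|>\eps)-\lambda_\eps t=t\int_{|z|>\rho}\big(\PP(|M_t(\rho)+z|>\eps)-\1_{|z|>\eps}\big)f(z)\,dz .
\end{equation*}
As $\nu$ (hence $f$) and $M_t(\rho)$ are symmetric, $z\mapsto\PP(|M_t(\rho)+z|>\eps)$ is even, so the right-hand side is $2t$ times the same integral over $z>\rho$. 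I then split $(\rho,\infty)$ into the leak-out part $(\rho,\eps)$, on which the integrand is $\PP(|M_t(\rho)+z|>\eps)f(z)$, and the leak-in part $(\eps,\infty)$, on which it is $-\PP(|M_t(\rho)+z|\le\eps)f(z)$; the leak-in part is cut further at $2\eps-\rho$, so that the Lipschitz window covers precisely $(\rho,\eps)\cup(\eps,2\eps-\rho)$.

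Next I pair $z=\eps-w\in(\rho,\eps)$ with $z=\eps+w\in(\eps,2\eps-\rho)$, $w\in(0,\eps-\rho)$. Using the symmetry of $M_t(\rho)$ (so $\PP(M_t(\rho)>a)=\PP(M_t(\rho)<-a)$, and its law is continuous since $\alpha\ge1$), one gets $\PP(|M_t(\rho)+\eps-w|>\eps)=\PP(M_t(\rho)>w)+\PP(M_t(\rho)<-2\eps+w)$ and $\PP(|M_t(\rho)+\eps+w|\le\eps)=\PP(M_t(\rho)\in[-2\eps-w,-w])=\PP(M_t(\rho)>w)-\PP(M_t(\rho)>2\eps+w)$, so the paired contribution to the integrand is
\begin{equation*}
\PP(M_t(\rho)>w)\big(f(\eps-w)-f(\eps+w)\big)+\PP(M_t(\rho)<-2\eps+w)f(\eps-w)+\PP(M_t(\rho)>2\eps+w)f(\eps+w).
\end{equation*}
The point of the pairing is that the bounded-probability parts have cancelled and only the difference $f(\eps-w)-f(\eps+w)$ survives in the leading term; this is exactly where the Lipschitz hypothesis enters.

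For the main term I bound $|f(\eps-w)-f(\eps+w)|\le 2wM(\eps\wedge1)^{-(2+\alpha)}$ (legitimate since $\eps\pm w\in(\rho,2\eps-\rho)$ for $w\in(0,\eps-\rho)$) and use $\int_0^\infty w\,\PP(M_t(\rho)>w)\,dw\le\tfrac12\E[M_t(\rho)^2]=\tfrac12 t\sigma^2(\rho)$ together with $\sigma^2(\rho)\le\tfrac{2M}{2-\alpha}\rho^{2-\alpha}$ from \eqref{eq:prfsigma} and $\rho=\tfrac34(\eps\wedge1)$; this gives a contribution of order $M^2t^2(\eps\wedge1)^{-2\alpha}$, i.e. the ${\rm K}_4\eps^{-2\alpha}$ term for $\eps\le1$ and an $\eps$-independent $M^2t^2$ term (absorbed in $\eps^2{\rm K}_5$) for $\eps>1$. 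It remains to estimate the far-field pieces: the deviation terms $\PP(M_t(\rho)<-2\eps+w)$ and $\PP(M_t(\rho)>2\eps+w)$ (for which $|-2\eps+w|\ge\eps+\rho$ and $2\eps+w\ge 2\eps$), together with the uncovered leak-in tail $-\int_{2\eps-\rho}^\infty\PP(|M_t(\rho)+z|\le\eps)f(z)\,dz$, where by symmetry $\PP(|M_t(\rho)+z|\le\eps)=\PP(M_t(\rho)\in[z-\eps,z+\eps])\le\PP(M_t(\rho)\ge z-\eps)\le\PP(M_t(\rho)\ge\eps-\rho)$ with $\eps-\rho=\tfrac13\rho$ for $\eps\le1$. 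Each of these is of the form $\PP(M_t(\rho)\in A)$ with $A$ bounded away from $0$, and is handled by the decomposition \eqref{eq:decprf} at a smaller level, the Markov inequality with \eqref{eq:prfsigma}, the Poisson bounds \eqref{poisson} with \eqref{eq:prflbd}, and Lemma \ref{sj}; the $z$-integral is split at $|z|=1$ and $|z|=2$, using $f\in\mathscr L_{M,\alpha}$ for $|z|\le2$ and $f\in\mathscr L_M$ with $\int_{|z|>1}f=\lambda_1$ for $|z|>1$. This produces the $Mt^2\lambda_1(\eps\wedge1)^{-\alpha}$ term (from the tail $|z|>1$) and the $\eps^2$ growth for $\eps>1$ (from the length $\eps-\rho$ of the leak-in window), and the constants are collected into ${\rm K}_4,{\rm K}_5,{\rm K}_6$. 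One may assume throughout that $t\sigma^2(\rho)\rho^{-2}\le1$, so that the clean form \eqref{sg} of Lemma \ref{sj} is available: if this fails then $t\ge\tfrac{2-\alpha}{2M}\rho^\alpha$ by \eqref{eq:prfsigma}, and the claim is trivial because its left-hand side is at most $\lambda_\rho t+\lambda_\eps t\le2\lambda_\rho t$, a linear function of $t$ dominated by the $t^2$ terms on the right for $t$ that large — which is why the statement holds for all $t>0$. The main obstacle is precisely this last step: keeping every far-field error genuinely of order $t^2$ (not $t$) with the correct power of $\eps$, uniformly over the regimes $\rho<|z|\le1$, $1<|z|\le2$, $|z|>2$ and over all $t>0$, and matching crude bounds to the claimed $\text{const}\times(\eps^{-2\alpha}\vee\eps^2\vee\lambda_1(\eps\wedge1)^{-\alpha})$ form; by contrast, the pairing/cancellation — which removes the leading $O(t)$ contribution, and for $\alpha=1$ also the logarithmic factor present in Lemma \ref{yalphabis} — is conceptually the crucial step but computationally routine once set up.
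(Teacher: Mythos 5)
Your proposal is correct and rests on the same backbone as the paper's proof: exploit the symmetry of $\nu$ and of $M_t(\rho)$ to reduce to a one-sided integral, split into the leak-out range $(\rho,\eps)$ and the leak-in range $(\eps,\infty)$, and pair $z=\eps-w$ with $z=\eps+w$ so that the $O(t)$ contributions cancel and only $\PP(M_t(\rho)>w)\,(f(\eps-w)-f(\eps+w))$ survives, which is exactly where the Lipschitz hypothesis is consumed (this is the paper's decomposition into $R_1$, $R_2$ and its term $R_{1,1}$). Where you genuinely differ is in how the main term is integrated: you use $\int_0^\infty w\,\PP(M_t(\rho)>w)\,dw\le\tfrac12\E[M_t(\rho)^2]=\tfrac12 t\sigma^2(\rho)$ together with \eqref{eq:prfsigma}, which is cleaner and in fact slightly sharper than the paper's route (the paper bounds $\PP(M_t(\rho)>x)$ pointwise by $O(Mtx^{-\alpha})$ via \eqref{eq:decprf}, Markov, \eqref{poisson}, \eqref{eq:prflbd} and then integrates, which is what produces its $(\eps-3/4)^{2-\alpha}$ and $\eps^2$ contributions for $\eps>1$; your second-moment bound avoids any $\eps$-growth there, and the claimed $\eps^2{\rm K}_5$ term then holds a fortiori). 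Two remarks on the rest. First, your appeal to Lemma \ref{sj} and the resulting large-$t$ fallback (using that $t\sigma^2(\rho)\rho^{-2}>1$ forces $t\gtrsim\rho^\alpha/M$, after which the trivial bound $2\lambda_\rho t$ is absorbed by the $t^2$ terms, at the cost of enlarging the constants) is valid but unnecessary: every far-field probability you list ($\PP(M_t(\rho)<w-2\eps)$, $\PP(M_t(\rho)>2\eps+w)$, $\PP(M_t(\rho)\ge\eps-\rho)$ on the uncovered tail) sits at distance at least of order $\rho$ from the origin, so Markov with \eqref{eq:prfsigma} already gives an $O(Mt(\eps\wedge1)^{-\alpha})$ bound, and the density integrals are tail masses controlled by \eqref{eq:prflbd} and $\lambda_1$; since the whole expression carries an explicit outer factor $t$, this yields the stated $t^2$ bound for all $t>0$ with no case distinction — which is precisely why the paper's proof never invokes Lemma \ref{sj} here. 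Your closing self-assessment inverts the difficulty accordingly: the far-field bookkeeping only requires each inner probability to be $O(t)$, not $O(t^2)$, and is routine, while the pairing/cancellation is indeed the essential step. Second, two harmless inaccuracies: the continuity of the law of $M_t(\rho)$ is not guaranteed (no lower bound on $f$ is assumed, so $M_t(\rho)$ may have an atom), but boundary equalities only matter for a Lebesgue-null set of $w$ and do not affect the integrals; and the pointwise use of $f\in\mathscr L_M$ for $|z|>1$ is not licensed by the hypotheses of Lemma \ref{yalpha} — it is also not needed, since those pieces can be bounded by $\lambda_1$ (the paper's own proof of this lemma invokes $\mathscr L_M$ in the same inessential way).
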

\begin{proof}[Proof of Theorem \ref{lambda2}]
Let $\rho:=3/4(\eps\wedge1)$, 
 using \eqref{eq:decprf} at point $\rho$ and  $\PP(|M_t(\rho)|>\eps)\le\PP(|M_t(\rho)|>1\wedge\eps)$, we derive 
\begin{align*}
|\PP(|X_t|>\eps)-\lambda_\eps t|&\leq \PP(|M_t(\rho)|>1\wedge\eps)+|\lambda_\rho t \PP(|M_t(\rho)+Y_1^{(\rho)}|>\eps)-\lambda_\eps t |\\
&\quad + \lambda_\rho t (1-e^{-\lambda_\rho t}) + \PP(N_t(\rho)\geq 2)\\
:&=I_{1}+I_{2}+I_{3}+I_{4}.
\end{align*}

By  Lemma \ref{psalpha}, Lemma \ref{yalpha}, \eqref{poisson} and \eqref{eq:prflbd} it follows that
\begin{align*}
I_1+I_2+I_3+I_4&\leq  t^2M^2 \big( ({\rm F}_{1}\eps^{-2\alpha}+\lambda_1\eps^{-\alpha}{\rm F}_2) \1_{0<\eps\leq 1}+(\eps^2{\rm F}_3 +{\rm F}_{4})\1_{\eps>1} \big) \nonumber\\
&\quad +t^4 M^4 {\rm F}_5(\eps\wedge 1)^{-4\alpha}+2t^2\lambda_1^2,
\end{align*}
where ${\rm F}_{2}={\rm K}_{6} $, ${\rm F}_{3}={\rm K}_{5}$,  ${\rm F}_{5}=2{\rm K}_{3} $ and
\begin{align}\label{d1d2}
{\rm F}_{1}&:=2{\rm K}_1+2{\rm K}_2\1_{\alpha\in(1,2)}+{\rm K}_4+64\ln(2)\1_{\alpha=1}+\frac{6}{\alpha^2},\nonumber\\ 
{\rm F}_{4}&:=2{\rm K}_1+2{\rm K}_2\1_{\alpha\in(1,2)}+64\ln(2)\1_{\alpha=1}+\frac{6}{\alpha^2}.
\end{align}
\end{proof}

\appendix
\section{Technical lemmas and additional proofs\label{secApp}}

\subsection{Proof of Lemma \ref{sj}}
For any $u>0$ we have that 
\begin{equation*}\E\big[e^{u M_t(\eps)}\big]\leq \exp\bigg(t\int (e^{u|y|}-u|y|-1)\nu_\eps(dy) \bigg)
\end{equation*}
and therefore, using that $\int |y|^k\nu_\eps(dy)\leq \eps^{k-2}\sigma^2(\eps)$ for all $k\geq 2$,
\begin{align}\label{eq:lem}
\PP(M_t(\eps)>x)&\leq \exp\Big(-ux+t\int (e^{u|y|}-u|y|-1)\nu_\eps(dy)\Big)\nonumber \\
&= e^{\frac{u^2 t\sigma^2(\eps)}{2}-ux+t \sum_{k=3}^\infty \int \frac{u^k|y|^k}{k!}\nu_\eps(dy)}\leq e^{\frac{u^2 t\sigma^2(\eps)}{2}-ux +t \sigma^2(\eps)\sum_{k=3}^\infty  \frac{u^k\eps^{k-2}}{k!}}\nonumber \\
&=e^{-ux +t \frac{\sigma^2(\eps)}{\eps^2}(e^{u\eps}-1-u\eps)}.
\end{align}
Injecting $u^*=\frac{1}{\eps}\log\big(1+\frac{x\eps}{t\sigma^2(\eps)}\big)$ in \eqref{eq:lem}, we find that $\PP(M_t(\eps)>x)\leq e^{\frac{x}{\eps}}\big(\frac{t\sigma^2(\eps)}{x\eps+t\sigma^2(\eps)}\big)^{\frac{x\eps+t\sigma^2(\eps)}{\eps^2}}$, as claimed.
To derive \eqref{sg}, we simply use the fact that $u^{-u}\leq e^{e^{-1}}$ for all $u> 0$. Indeed, set $u=\frac{x\eps+t\sigma^2(\eps)}{\eps^2}$ and notice that 
\begin{align*}
\Big(\frac{t\sigma^2(\eps)}{x\eps+t\sigma^2(\eps)}\Big)^{\frac{x\eps+t\sigma^2(\eps)}{\eps^2}}&=\Big(\frac{t\sigma^2(\eps)}{\eps^2}\Big)^u u^{-u}
\leq e^{e^{-1}} \Big(\frac{t\sigma^2(\eps)}{\eps^2}\Big)^{\frac{x\eps+t\sigma^2(\eps)}{\eps^2}}.
\end{align*}
Equation \eqref{sg} then follows under the assumption $t\sigma^2(\eps)\eps^{-2}\leq 1$.

Analogous arguments, with $M_t(\eps)$ replaced by $-M_t(\eps)$, allows to deduce that 
$$\PP(-M_t(\eps)>x)\leq e^{\frac{x}{\eps}}\big(\frac{t\sigma^2(\eps)}{x\eps+t\sigma^2(\eps)}\big)^{\frac{x\eps+t\sigma^2(\eps)}{\eps^2}}$$
and hence the inequality
$$\PP(M_t(\eps)\leq -x)\leq e^{e^{-1}} \Big(\frac{t\sigma^2(\eps)}{\eps^2}\Big)^{\frac{x\eps+t\sigma^2(\eps)}{\eps^2}},$$
whenever $t\sigma^2(\eps)\eps^{-2}\leq 1$.

\subsection{Proof of Lemma \ref{aux}}

First, we consider the general case where $\nu$ is not symmetric.
We control the quantity $J=\lambda_\rho\PP(|M_t(\rho)+tb(\rho)+Y_1^{(\rho)}|>\eps)-\lambda_\eps$ as $Q=|J|t$. It holds that
\begin{align}
J&=\int_\rho^\infty \big(\PP(M_t(\rho)+tb(\rho)<-\eps-z) f(z)+\PP(M_t(\rho)+tb(\rho)>\eps+z) f(-z)\big)dz\nonumber\\
&\quad -\int_\eps^\infty \big(\PP(M_t(\rho)+tb(\rho)\leq \eps-z)f(z)+\PP(M_t(\rho)+tb(\rho)>z-\eps)f(-z)\big)dz\nonumber\\
&\quad+\int_{\rho}^\eps  \big(\PP(M_t(\rho)+tb(\rho)> \eps-z)f(z)+\PP(M_t(\rho)+tb(\rho)<-\eps+z)f(-z)\big)dz\nonumber\\&=:R-S+T. \label{dec}
\end{align}
Recall $\rho=\eps\wedge1$, assumptions on $t$ ensures that $t|b(\rho)|\leq \rho/2$, thus 
\begin{align*}
R\leq \int_\rho^\infty  \big(\PP(M_t(\rho)<-\rho) f(z)+\PP(M_t(\rho)>\rho) f(-z)\big)dz.
\end{align*}
By means of Markov inequality and \eqref{eq:prfsigma} we then derive 
\begin{equation}\label{n56}
|R|\leq \frac{2M }{2-\alpha}t \lambda_\rho \rho^{-\alpha}.
\end{equation}
To treat the terms $S$ and $T$ we distinguish the cases $\eps\in(0,1]$ and $\eps>1$. Moreover, we restrict to the case $b(\rho)\geq 0$, the case $b(\rho)<0$ can be obtained similarly and leads to the same result. Decompose $S:=S_{1}+S_{1}$ where 
\begin{align*}
S_{1}+S_{2}=\int_{-tb(\rho)}^\infty \PP(M_t(\rho)> x)f(-\eps-tb(\rho)-x)dx+ \int_{tb(\rho)}^{\infty} \PP(M_t(\rho)\leq  -x)f(x+\eps-tb(\rho))dx.
\end{align*} 
We only detail the computations for the term $S_1$, those for the term $S_2$ being analogous.

\paragraph{Case $\eps\in(0,1]:$} Then $\rho=\eps$ and by means of the triangle inequality it holds 
\begin{align*}
|S_1|&\leq\int_{-tb(\eps)}^{tb(\eps)}f(-\eps-tb(\eps)-x)dx+\int_{t b(\eps)}^{\eps/2}\PP( M_t(\eps)> x)f(-\eps-tb(\eps)-x)dx\\
&\quad +\int_{\eps/2}^\eps\PP( M_t(\eps)> x)f(-\eps-tb(\eps)-x)dx+\int_{\eps}^\infty\PP( M_t(\eps)> x)f(-\eps-tb(\eps)-x)dx\\
&=S_{1,1}+S_{1,2}+S_{1,3}+S_{1,4}.
\end{align*}
Using that $f\in\mathscr L_{M,\alpha} $ and \eqref{eq:prfb}, it follows that 
\begin{align}\label{n11}
S_{1,1}&\leq 2Mtb(\eps) \eps^{-(1+\alpha)}\leq \frac{4M^2t\eps^{-2\alpha}}{1-\alpha}.
\end{align}
To control the term $S_{1,2}$ we proceed as for the control of the term $T_{2,1}$ in the proof of Theorem \ref{ps1}. Observe that $0<t\leq (1-\alpha) M^{-1} \eps^{\alpha}4^{-(1+\alpha)}$ implies $2Mt\eps^{1-\alpha} (1-\alpha)^{-1}<\eps/2$. Let $x\in(2Mt\eps^{1-\alpha} (1-\alpha)^{-1},\eps/2)$ and set $\tilde x:=x-2Mt\eps^{1-\alpha} (1-\alpha)^{-1}$. In particular we can write $M_t(\eps)=M_t(\tilde x)+Z_t(\tilde x, \eps)-t(b(\eps)-b(\tilde x))$. From the assumption $f\in\mathscr L_{M,\alpha}$ it also follows that $|b(\eps)-b(\tilde x)|\leq 2M \eps^{1-\alpha}(1-\alpha)^{-1}$ and so $\PP(M_t(\tilde x) >x+t(b(\eps)-b(\tilde x)))\leq \PP(M_t(\tilde x)>\tilde x)$. Therefore, for all $x\in(2Mt\eps^{1-\alpha} (1-\alpha)^{-1},\eps/2)$, the Markov inequality and \eqref{eq:prfsigma}, lead  to
\begin{align*}
\PP(M_t(\eps)>x)&\leq \PP(M_t(\tilde x) >x+t(b(\eps)-b(\tilde x)))+\PP(N_t(\tilde x,\eps)\geq 1)\\
&\leq \PP(M_t(\tilde x)>\tilde x)+t \lambda_{\tilde x,\eps}\leq  \frac{2(2+\alpha)Mt \tilde x^{-\alpha}}{\alpha(2-\alpha)}.
\end{align*}
Furthermore, by means of \eqref{eq:prfb}, $t|b(\eps)|\leq2Mt\eps^{1-\alpha} (1-\alpha)^{-1}$ and $f\in\mathscr L_{M,\alpha}$ we get
$$\int_{tb(\eps)}^{2M/(1-\alpha)t\eps^{1-\alpha} }\hspace{-0.3cm}f(-\eps-tb(\eps)-x)dx\leq \int_{0}^{2Mt\eps^{1-\alpha} (1-\alpha)^{-1}}f(-\eps-tb(\eps)-x)dx \leq \frac{2M^2}{1-\alpha}t\eps^{-2\alpha}$$
and
\begin{align*}
\int_{ 2Mt\eps^{1-\alpha} /(1-\alpha)}^{\eps/2}\hspace{-0.3cm}\tilde x^{-\alpha}f(-\eps-tb(\eps)-x)dx&\le
\frac{ M}{\eps^{1+\alpha}}\int_{2Mt\eps^{1-\alpha} (1-\alpha)^{-1}}^{\eps/2} \Big(x-2Mt\eps^{1-\alpha} (1-\alpha)^{-1}\Big)^{-\alpha}dx\\
 &\le  \frac{M}{1-\alpha}\eps^{-2\alpha}.
\end{align*}
We derive that
\begin{align}\label{s12}
S_{1,2}\leq\frac{2M^2}{1-\alpha}t\eps^{-2\alpha}+ \frac{2(2+\alpha)M^{2}}{\alpha(2-\alpha)(1-\alpha)}t \eps^{-2\alpha}.
\end{align}

To treat the term $S_{1,3}$ we notice that for any $t\in(0,(1-\alpha) M^{-1} \eps^{\alpha}4^{-(1+\alpha)})$ and $x\in[\eps/2,\eps]$ we have that $t\lambda_{x,\eps}\leq 1$ and hence, by Lemma \ref{cCPP}, we derive that for all $x\in[\eps/2,\eps]$ it holds:
\begin{align*}
\PP(M_t(\eps)>x)\leq \PP(M_t(x) +\tilde Z_t(x,\eps)>x) +2t\eps \sup_{|y|\in[x,\eps]} f(y),
\end{align*}
where $\tilde Z_t(x,\eps):=\sum_{i=1}^{N_t(x,\eta)}\big(Y_i^{(x,\eta)}-\E[Y_i^{(x,\eta)}]\big)$. 
Then, using \eqref{poisson}, the Markov inequality, \eqref{eq:prfsigma} and \eqref{eq:prflbd} we get 
\begin{align*}
 \PP(M_t(x) +\tilde Z_t(x,\eps)>x)\leq  \PP(M_t(x)>x)+\PP(N_{t}(x,\eps)\geq1)\leq \frac{2(2+\alpha)Mtx^{-\alpha}}{(2-\alpha)\alpha}.
\end{align*}
Moreover, the fact that $f\in\mathscr L_{M,\alpha}$ implies $ \sup_{|y|\in[x,\eps]} f(y)\leq M x^{-(1+\alpha)}\leq M 2^{1+\alpha} \eps^{-(1+\alpha)}$ for $x\in[\eps/2,\eps]$ and so we deduce that
\begin{align*}
\PP(M_t(\eps)>x)\leq2^{\alpha+1} Mt\eps^{-\alpha}\Big(2+\frac{2+\alpha}{(2-\alpha)\alpha}\Big).
\end{align*}
Together with $ \int_{\eps/2}^{\eps} f(-\eps-tb(\eps)-x)dx\leq \lambda_\eps$,
 we obtain
\begin{align}\label{s13}
S_{1,3}\leq\lambda_\eps2^{\alpha+1} Mt\eps^{-\alpha}\Big(2+\frac{2+\alpha}{(2-\alpha)\alpha}\Big).
\end{align}
Finally, for the term $S_{1,4}$ we have that
\begin{align*}
S_{1,4}\leq \PP(M_t(\eps)>\eps)\int_{-\infty}^{-2\eps-tb(\eps)} f(x)dx\leq  \PP(M_t(\eps)>\eps) \lambda_\eps.
\end{align*}
From the Markov inequality and \eqref{eq:prfsigma} we then derive
\begin{equation}\label{s14}
S_{1,4}\leq \frac{2M }{2-\alpha}t \lambda_\eps \eps^{-\alpha}.
\end{equation}

 Combining \eqref{n11}, \eqref{s12}, \eqref{s13} and \eqref{s14} yield
 \begin{align*}
 |S_1|&\leq\frac{ 2M^2}{1-\alpha}t\eps^{-2\alpha}\bigg(3+\frac{2+\alpha}{\alpha(2-\alpha)}\bigg)+ 2Mt\eps^{-\alpha}\lambda_\eps\bigg(1+2^\alpha\Big(2+\frac{2+\alpha}{\alpha(2-\alpha)}\Big)\bigg). 
 \end{align*}
The term $S_2$ can be controlled in a similar way, in particular it holds that 
\begin{align}\label{s1s2}
|-S|\leq \frac{ 4M^2}{1-\alpha}t\eps^{-2\alpha}\bigg(3+\frac{2+\alpha}{\alpha(2-\alpha)}\bigg)+ 4Mt\eps^{-\alpha}\lambda_\eps\bigg(1+2^\alpha\Big(2+\frac{2+\alpha}{\alpha(2-\alpha)}\Big)\bigg). 
  \end{align}
Finally, we observe that when $\eps\in(0,1]$ the term $T$ is identically zero.

Gathering Equations \eqref{dec},  \eqref{n56} and \eqref{s1s2}, we  conclude that for $\eps\in(0,1]$ 
 \begin{align*}
 |\lambda_\eps t \PP(|M_t(\eps)+tb(\eps)+Y_1^{(\eps)}|>\eps)-\lambda_\eps t |&\leq  t^{2}(M^2{\rm D}_1\eps^{-2\alpha}+M\lambda_\eps\eps^{-\alpha} {\rm D}_2),
 \end{align*}
 where
 \begin{align}\label{cte3}
 {\rm D}_1&:=\frac{4}{1-\alpha}\bigg(3+\frac{2+\alpha}{\alpha(2-\alpha)}\bigg)\quad \text{ and }\quad {\rm D}_2:=4\bigg(\frac{1}{2-\alpha}+1+2^\alpha\Big(2+\frac{2+\alpha}{\alpha(2-\alpha)}\Big)\bigg),
 \end{align}
 as claimed.
\paragraph{Case $\eps>1:$} Then $\rho=1$,  using that $f\in\mathscr L_{M,\alpha}\cap\mathscr L_M$ we readily derive 
\begin{align}\label{s1eps}
S_1\leq 2Mtb(1)+M\Big(\int_{tb(1)}^{1/2} 
+\int_{1/2}^\infty \PP(M_t(1)>x)dx\Big)=:\tilde S_{1,1}+\tilde S_{1,2}+\tilde S_{1,3}.
\end{align}
The term $\tilde S_{1,2}$ is the analogous of $S_{1,2}$ above. Observe that under the assumptions $0<t\leq (1-\alpha) (5M)^{-1}$ and $f\in\mathscr L_{M,\alpha}$, we get $t|b(1)|\leq 1/2$.
For any $x\in (2Mt(1-\alpha)^{-1},1/2)$, set $\hat x:= x-2Mt(1-\alpha)^{-1}.$ The same reasoning as for the term $S_{1,2}$ allows to conclude that, for any $x\in (2Mt(1-\alpha)^{-1},1/2)$, 
\begin{align}\label{m1bs}
\PP(M_t(1)>x)\leq \PP(M_t(\hat x)>\hat x)+t(\lambda_{\hat x,2}+\lambda_2)\leq \frac{4}{(2-\alpha)\alpha}Mt \hat x^{-\alpha} +t\lambda_2,
\end{align}
where in the last inequality we used the fact that $f\in\mathscr L_{M,\alpha}$ joined with the Markov inequality, \eqref{eq:prfsigma} and \eqref{eq:prflbd}.
Therefore, from \eqref{m1bs} and using again that $f\in\mathscr L_M$, we get
\begin{align}\label{ts12}
\tilde S_{1,2}&\leq Mt\Big(\frac{2M}{1-\alpha}-b(1)\Big)+\frac{4M^2t}{\alpha(2-\alpha)}\int_{2Mt(1-\alpha)^{-1}}^{1/2} \Big(x-\frac{2Mt}{1-\alpha}\Big)^{-\alpha}dx+\frac{tM\lambda_2}{2}\nonumber\\
&\leq Mt\Big(\frac{2M}{1-\alpha}-b(1)\Big)+\frac{4 M^2t}{\alpha(2-\alpha)(1-\alpha)}+\frac{tM\lambda_2}{2}.
\end{align}
Furthermore, by the Markov inequality and \eqref{eq:prfsigma}, we deduce that
\begin{align}\label{ts13}
\tilde S_{1,3}\leq \frac{4M^2t}{2-\alpha}.
\end{align}
Gathering \eqref{s1eps}, \eqref{ts12} and \eqref{ts13} we conclude that
\begin{align*}
S_1&\leq 2Mtb(1)+Mt\Big(\frac{2M}{1-\alpha}-b(1)\Big)+\frac{8 M^2t}{\alpha(2-\alpha)(1-\alpha)}+\frac{tM\lambda_2}{2}.
\end{align*}
Thus the term $S$ in \eqref{dec} can be bounded by
\begin{align}\label{sgrande}
|S|\leq 4Mtb(1)+2Mt\Big(\frac{2M}{1-\alpha}-b(1)\Big)+\frac{16 M^2t}{\alpha(2-\alpha)(1-\alpha)}+tM\lambda_2.
\end{align}
By means of \eqref{n56}, the term $R$ in \eqref{dec} is bounded by
\begin{align}\label{rgrande}
|R|\leq \frac{2Mt\lambda_1}{2-\alpha}.
\end{align}
To control $J$ we are  left to control the term $T$ in \eqref{dec}. We provide an upper bound for 
\begin{equation*}
T_1:=\int_1^\eps \PP(M_t(1)+tb(1)>\eps-z)f(z)dz=\int_{-tb(1)}^{\eps-1-tb(1)}\PP(M_{t}(1)\ge x)f(\eps-x-tb(1))dx,
\end{equation*}
the control of the quantity $\int_1^\eps\PP(M_t(1)+tb(1)<-\eps+z)f(-z)\big)dz$ can be treated similarly.
We have, using $t|b(1)|\le 1/2$,
\begin{align*}
T_{1}&=\1_{\eps\ge 1+2tb(1)}\bigg(\int_{-tb(1)}^{tb(1)}+\int_{tb(1)}^{1/2\wedge(\eps-1-tb(1))}\hspace{-0.2cm}+\int_{1/2\wedge(\eps-1-tb(1))}^{\eps-1-tb(1))}\PP(M_{t}(1)\ge x)f(\eps-x-tb(1))dx\bigg)\\
&\quad + \1_{1<\eps<1+2tb(1)}\int_{-tb(1)}^{tb(1)}\PP(M_{t}(1)\ge x)f(\eps-x-tb(1))dx =T_{1,1}+T_{1,2}.
\end{align*} 
For $f\in\mathscr L_{M}$, recalling the definition of $\tilde S_{1,2}$ given in \eqref{s1eps} and that we assumed $b(1)\geq0$, for $\eps\geq 1+2tb(1)$ we  write
\begin{align*}
T_{1,1}&\leq 2Mtb(1)+ M\int_{tb(1)}^{1/2\wedge (\eps-1-tb(1))}\PP(M_t(1)>y)dy+M\int_{1/2\wedge (\eps-1-tb(1))}^{\eps-1-tb(1)}\PP(M_t(1)>y)dy\\
&\leq M\Big(2tb(1)+\tilde S_{1,2}+\PP(M_t(1)>1/2)(\eps-3/2-tb(1))\1_{1/2<\eps-1-tb(1)}\Big)\\
&\leq Mt\Big(2b(1)+\frac{2M}{1-\alpha}+\frac{4 M}{\alpha(2-\alpha)(1-\alpha)}+\frac{\lambda_2}{2}+\frac{8M}{2-\alpha}(\eps-3/2-tb(1))\1_{\eps>3/2+tb(1)}\Big),
\end{align*}
where we used \eqref{ts12}, the Markov inequality and \eqref{eq:prfsigma}.
Concerning the term $T_{1,2}$, using that $f\in\mathscr L_{M,\alpha}$ joined with \eqref{eq:prfb} and the assumption $t<(1-\alpha)(5M)^{-1}$, we get
$$T_{1,2}\leq 2Mtb(1)(\eps-2tb(1))^{-1-\alpha}\1_{1<\eps<1+2tb(1)}\leq 4Mt5^{\alpha}\1_{1<\eps<1+2tb(1)}.$$

This entails that
\begin{align}\label{tgrande}
T&\leq Mt\bigg(\1_{\eps\ge 1+2t|b(1)|}\Big(\frac{6M}{1-\alpha}+\frac{4 M}{\alpha(2-\alpha)(1-\alpha)}+\frac{\lambda_2}{2}\nonumber \\
&\quad\phantom{Mt}+\frac{8M}{2-\alpha}(\eps-3/2-t|b(1)|)\1_{3/2+t|b(1)|<\eps}\Big)+4\times 5^{\alpha}\1_{1<\eps<1+2t|b(1)|}\bigg).
\end{align}
Combining \eqref{dec}, \eqref{sgrande}, \eqref{rgrande}, \eqref{tgrande} and using \eqref{eq:prflbd}, we conclude that for any $\eps>1$, $0<t< (1-\alpha)(5M)^{-1}$ and $f\in\mathscr L_{M,\alpha}\cap\mathscr L_M$ it holds, using $t|b(1)|\le 1/2$,
\begin{align*}
J&\leq 2M^2t \Big(\tilde{\rm D}_1+\frac{4}{2-\alpha}(\eps-3/2-t|b(1)|)\1_{3/2+t|b(1)|<\eps}\Big)\\
&\quad +Mt\bigg(4\times 5^{\alpha}\1_{1<\eps<1+2t|b(1)|}+\frac{8}{5}+3\lambda_2+\frac{4\lambda_1}{2-\alpha}\bigg),
\end{align*}
where we used the notation
\begin{equation}\label{tdlm3}
\tilde{\rm D}_1:=\frac{5}{1-\alpha}+\frac{10}{\alpha(2-\alpha)(1-\alpha)}.
\end{equation}
 \paragraph{Case $\nu$ symmetric and $\eps>0:$}
 In the case where $\nu$ is  symmetric  the proof can be  simplified. Since $b(\rho)\equiv0$, $M_t(\rho)=M_t(x)+Z_t(x,\rho)$ for all $x\in(0,\rho)$, $t>0$ and it holds
 \begin{align*}
 \lambda_\rho &\PP(|M_t(\rho)+Y_1^{(\rho)}|>\eps)-\lambda_\eps =2\bigg(\int_\rho^\infty \big(\PP(M_t(\rho)>\eps+z)-\PP(M_t(\rho)<\eps-z)\big)f(z)dz\bigg)\\
 &\leq \lambda_\rho \PP(M_t(\rho)>2\rho)+2\bigg(\int_0^\rho \PP(M_t(\rho)>x) f(x+\eps)dx+\PP(M_t(\rho)>\rho)\int_{\rho}^\infty  f(x+\eps)dx\bigg).
 \end{align*}
 With the same arguments as those used to treat the term $S_{1,2}$ above, one finds that for any $x\in(0,\eps)$ and $t>0$ it holds $\PP(M_t(\rho)>x)\leq \frac{2(2+\alpha)M tx^{-\alpha}}{\alpha(2-\alpha)}$. Therefore, by the Markov inequality,  \eqref{eq:prfsigma} and using that $f\in\mathscr L_{M,\alpha}$, we conclude for all $\eps\in(0,1)$ and $t>0$ it holds
 $$| \lambda_\rho \PP(|M_t(\rho)+Y_1^{(\rho)}|>\eps)-\lambda_\eps|\leq \frac{tM}{2(2-\alpha)}\big(\lambda_\eps \eps^{-\alpha}+4\lambda_{2\eps} \eps^{-\alpha}\big)+2tM^2{\rm D_2}\eps^{-2\alpha},$$
 with
 \begin{equation}\label{d2lm3}
{ \rm D}_3:=\frac{2(2+\alpha)}{(2-\alpha)\alpha(1-\alpha)}.
 \end{equation}
 If instead $\eps>1$, assuming in addition that $f\in \mathscr L_M$, we derive 
  $$| \lambda_1 \PP(|M_t(1)+Y_1^{(1)}|>\eps)-\lambda_\eps|\leq \frac{tM}{2-\alpha}\bigg(\lambda_1 2^{-\alpha}+\frac{4M}{\alpha(1-\alpha)}+\lambda_{1+\eps} \bigg).$$
 This concludes the proof.

\subsection{Proof of Lemma \ref{yalphabis}}
 Decomposition \eqref{eq:auxalpha} as in the proof of Lemma  \ref{yalpha} in $\lambda_\rho t \PP(|M_t(\rho)+Y_1^{(\rho)}|>\eps)-\lambda_\eps t=:2t (R_1+R_2)$ still holds
with 
\begin{align*}
|R_2|\leq tM \1_{0<\eps\leq 1}\bigg(\frac{M\eps^{-2\alpha}}{\alpha(2-\alpha)}+\frac{\eps^{-\alpha}\lambda_1}{2(2-\alpha)}\bigg)+tM\frac{\1_{\eps>1}\lambda_1}{(2-\alpha)(\eps+1)^2}.
\end{align*}
Set $C=\big(1\wedge\big((2-\alpha)/2M\big)\big)^{1/\alpha}$ and note that $C(\eps\wedge 1)/2>t^{1/\alpha}$. Using the symmetry of $f$ we get 
\begin{align*}
|R_1|&\leq \int_0^{t^{1/\alpha}/C}\big( f(y+\eps)+\1_{\eps>1}f(\eps-y)\big)dy +\int_{t^{1/\alpha}/C}^{\rho}\big(\PP(M_t(y)>y)+t\lambda_{y,\rho}\big)f(\eps+y)dy\\
&\quad+\PP(M_t(\rho)>\rho)\int_\rho^\infty f(y+\eps)dy\\
&\quad +\1_{\eps>1}\bigg(\int_{t^{1/\alpha}/C}^{1\wedge(\eps-1)}\big(\PP(M_t(y)>y)+t\lambda_{y,1}\big)f(\eps-y)dy+\PP(M_t(1)>1)\int_{1\wedge(\eps-1)}^{\eps-1} f(\eps-y)dy\bigg).
\end{align*}
 Next as $f\in\mathscr L_{M,\alpha}\cap \mathscr L_M$, it follows from Equations \eqref{eq:decprf}, \eqref{poisson}, \eqref{eq:prfsigma}, \eqref{eq:prflbd} and the Markov inequality, that \begin{align*}
|R_1|&\leq \frac{M t^{1/\alpha}}{C}\bigg(\frac{1}{(\eps\wedge 1)^{1+\alpha}}+\frac{\1_{\eps>1}}{(\eps-t^{1/\alpha}/C)\wedge 1}\bigg)\\
&\quad +\frac{2Mt}{2-\alpha}\Big( \rho^{-\alpha}\Big(M\alpha^{-1}(\rho+\eps)^{-\alpha}+\frac{\lambda_1}{2}\Big)+M\1_{\eps>2}(\eps-2)\Big)\\
&\quad +\frac{4M^2 t^{1/\alpha}C^{\alpha-1}\1_{\alpha\in(1,2)}}{\alpha(2-\alpha)(\alpha-1)}\bigg(\frac{1}{(\eps+t^{1/\alpha}/C)\wedge 1}+\frac{\1_{\eps>1}}{(\eps-t^{1/\alpha}/C)\wedge 1}\bigg)\\
&\quad +4M^2t\1_{\alpha=1}\bigg(\ln\Big(\frac{C(1\wedge|\eps-1|)}{t}\Big)\frac{\1_{\eps>1}}{(\eps-t/C)\wedge 1}+\ln\Big(\frac{C\rho}{t}\Big)\frac{1}{(\eps+t/C)\wedge 1}\bigg)\\
&\leq \frac{M t^{1/\alpha}}{C}\bigg(\frac{1}{(\eps\wedge 1)^{1+\alpha}}+\frac{2}{\eps\wedge 1}\bigg)+\frac{2Mt}{2-\alpha}\Big(M\alpha^{-1}(\eps\wedge1)^{-2\alpha}+\frac{\lambda_1(\eps\wedge 1)^{-\alpha}}{2}+M\1_{\eps>2}\eps\Big)\\
&\quad +\frac{12M^2 t^{1/\alpha}C^{\alpha-1}\1_{\alpha\in(1,2)}}{\alpha(2-\alpha)(\alpha-1)}\frac{1}{\eps\wedge 1} +12M^2t\1_{\alpha=1}\ln\Big(\frac{C(1\wedge\eps\wedge(\eps-1\vee 0))}{t}\Big)\frac{1}{\eps\wedge 1},
\end{align*}
with the convention that $0\ln 0=0$. Therefore,
\begin{align*}
 |\lambda_\rho t \PP(|M_t(\rho)&+Y_1^{(\rho)}|>\eps)-\lambda_\eps t|
\\&\leq{\rm L}_{1}\frac{t^{1+1/\alpha}}{(\eps\wedge 1)^{1+\alpha}}+\frac{8M^{2}}{\alpha(2-\alpha)}\frac{t^{2}}{(\eps\wedge 1)^{2\alpha}}+\frac{5M}{2-\alpha}\frac{t^{2}\lambda_{1}}{(\eps\wedge1)^2}+\frac{4M^{2}t^{2}}{2-\alpha}\1_{\eps>2}\eps\\
&\quad +12M^2t^{2}\1_{\alpha=1}\ln\Big(\frac{C(1\wedge\eps\wedge(\eps-1\vee 0))}{t}\Big)\frac{1}{\eps\wedge 1} \end{align*}
where 
\begin{align}\label{eq:L123}
{\rm L}_{1}&=\frac{2M}{C}+\frac{4M}{C}+\frac{24M^2 C^{\alpha-1}\1_{\alpha\in(1,2)}}{\alpha(2-\alpha)(\alpha-1)},
\end{align}
as desired.

\subsection{Proof of Lemma \ref{psalpha}}First, using the symmetry of $\nu$ it holds $\PP(|M_t(\rho)|>\eps)=2\PP(M_t(\rho)>\eps)$ where we write $\rho:=3\eps/4$. Since $\eps/2<\rho<\eps$ together with \eqref{eq:decprf} and \eqref{poisson}, we obtain
\begin{align}\label{eqdep}
\PP(M_t(\rho)>\eps)\leq \PP(M_t(\eps/2)>\eps)+t\lambda_{\eps/2,\rho}\PP(M_t(\eps/2)+Y_1^{(\eps/2,\rho)}>\eps)+(t\lambda_{\eps/2,\rho})^2.
\end{align}
Applying Lemma \ref{sj} and using \eqref{eq:prfsigma} we derive 
\begin{align}\label{primi}
\PP(M_t(\eps/2)>\eps)+(t\lambda_{\eps/2,\rho})^2\leq M^2t^2\eps^{-2\alpha}{\rm K}_1,
\end{align}
with
\begin{equation}\label{eq:k1alpha}
{\rm K}_1:=4^{1+\alpha}\bigg( \frac{e^{2+1/e}}{(2-\alpha)^2}+\frac{1}{\alpha^2}\bigg).
\end{equation}
Using again the symmetry of $\nu$ we can establish 
\begin{align}\label{t1t2}
\lambda_{\eps/2,\rho}\PP(&M_t(\eps/2)+Y_1^{(\eps/2,\rho)}>\eps)=\hspace{-0.1cm}\int_{\eps/4}^{\eps/2} \hspace{-0.3cm}\PP(M_t(\eps/2)>y) f(\eps-y)dy+\int_{\eps/2}^\rho \hspace{-0.2cm}\PP(M_t(\eps/2)>\eps+y)f(y)dy\nonumber \\
&\leq \int_{\eps/4}^{\eps/2} \PP(M_t(\eps/2)>y) f(\eps-y)dy+ \PP(M_t(\eps/2)>3/2\eps)\frac{\lambda_{\eps/2,\rho}}{2}=:T_1+T_2.
\end{align}
Applying \eqref{eq:decprf}, \eqref{poisson}, the Markov inequality and \eqref{eq:prfsigma}, for any $y\in (\eps/4,\eps/2)$ we have
\begin{align*}
\PP(M_t(\eps/2)>y)&\leq \PP(M_t(y)>y)+t\lambda_{y,\eps/2}\leq \frac{4Mt y^{-\alpha}}{\alpha(2-\alpha)}.
\end{align*}
It follows that
\begin{align}\label{psalphat1}
T_1&\leq \frac{4Mt }{\alpha(2-\alpha)} \int_{\eps/4}^{\eps/2} y^{-\alpha} f(\eps-y)dy\leq \frac{2^{3+\alpha}M^2t }{\alpha(2-\alpha)\eps^{1+\alpha}}\bigg(\frac{(\eps/4)^{1-\alpha}}{(\alpha-1)}\1_{\alpha\in(1,2)}+\ln(2)\1_{\alpha=1}\bigg).
\end{align}
Furthermore, note that Lemma  \ref{sj} applies as $t\leq \frac{(2-\alpha)\eps^\alpha}{2^{1+\alpha}M}$ implies $4t\sigma^2(\eps/2)\eps^{-2}\leq 1$, together with \eqref{eq:prfsigma}, it gives
\begin{align}\label{psalphat2}
T_2&\leq \frac{t^32^{3(1+\alpha)}e^{3+1/e} M^4 }{\alpha(2-\alpha)^3\eps^{4\alpha}}.
\end{align}
From \eqref{t1t2}, \eqref{psalphat1} and \eqref{psalphat2}, we obtain that 
\begin{align}\label{lmb}
\lambda_{\eps/2,\rho}\PP(M_t(\eps/2)+Y_1^{(\eps/2,\rho)}>\eps)\leq &t  \eps^{-2\alpha}M^2 {\rm K}_2\1_{\alpha\in(1,2)}+\frac{t^3 M^4 {\rm K}_3}{\eps^{4\alpha}}
+\frac{16 M^2 t}{\eps^2} \ln(2)\1_{\alpha=1},
\end{align}
with
\begin{align}\label{k1alpha}
 {\rm K}_2&:=\frac{2^{1+3\alpha}}{\alpha(2-\alpha)(\alpha-1)}  \quad
 \text{ and } \quad   {\rm K}_3:=\frac{2^{3(1+\alpha)}e^{3+1/e} }{\alpha(2-\alpha)^3}.
\end{align}
Finally, gathering \eqref{eqdep}, \eqref{primi} and \eqref{lmb}, we derive
$$\PP(M_t(\rho)>\eps)\leq M^2t^2\eps^{-2\alpha}{\rm K}_1+ t^2  \eps^{-2\alpha}M^2 {\rm K}_2\1_{\alpha\in(1,2)}+\frac{t^4 M^4 {\rm K}_3}{\eps^{4\alpha}}+\frac{16 M^2 t^2}{\eps^2} \ln(2)\1_{\alpha=1}.$$

\subsection{Proof of Lemma \ref{yalpha}}
First, since $\nu$ is symmetric, it holds
\begin{align*}
\lambda_\rho \PP(|M_t(\rho)+Y_1^{(\rho)}|>\eps)
&=2\int_\rho^\infty \big(\PP(M_t(\rho)>\eps-z)+\PP(M_t(\rho)>\eps+z)\big)\nu(dz).
\end{align*}
Moreover, since $\rho<\eps,$ and using again the symmetry, we obtain
\begin{align}\label{eq:auxalpha}
\nonumber \lambda_\rho t& \PP(|M_t(\rho)+Y_1^{(\rho)}|>\eps)-\lambda_\eps t \\&=2t\Big[ \int_\rho^\eps \PP(M_t(\rho)>\eps-z)\nu(dz)
-\int_\eps^\infty \PP(M_t(\rho)\leq \eps-z)\nu(dz)\Big]\nonumber \\
&\quad +2t\int^\infty_{\rho}\PP(M_t(\rho)>\eps+z)\nu(dz)
=:2t (R_1+R_2).
\end{align}
We begin by controlling the term $R_1$. Recalling that $\rho=3/4(\eps\wedge 1)$ and setting $\eta:=\eps-3/4(\eps\wedge1)$, we have:
\begin{align*}
&\int_\rho^\eps \PP(M_t(\rho)>\eps-z)\nu(dz)=\int_0^{ \eta}\PP(M_t(\rho)>x)f(\eps-x)dx,\\
&\int_\eps^\infty \PP(M_t(\rho)\leq \eps-z)\nu(dz)=\int_\eps^\infty \PP(M_t(\rho)> z-\eps)\nu(dz)=\int_0^\infty \PP(M_t(\rho)> x)f(\eps+x)dx,
\end{align*} where we used the symmetry of $\nu$ in the second line.
The triangle inequality gives
\begin{align}\label{r1ti}
|R_1|&\leq \bigg|\int_0^{ \eta}\PP(M_t(\rho)>x)(f(\eps-x)-f(\eps+x))dx\bigg|+\bigg|\int_{\eta}^\infty \PP( M_t(\rho)> x)f(\eps+x)dx\bigg|\nonumber \\
&=:R_{1,1}+R_{1,2}.
\end{align}

Therefore, by means of \eqref{eq:decprf}, \eqref{poisson}, the Markov inequality, \eqref{eq:prfsigma}, and that $f$ is $M(\eps\wedge 1)^{-(2+\alpha)}$-Lipschitz on the interval $\big(3/4(\eps\wedge 1),2\eps-3/4(\eps\wedge 1)\big)$, it follows that
\begin{align}\label{r11}
R_{1,1}&\leq 2M(\eps\wedge 1)^{-(2+\alpha)}\bigg[\1_{0<\eps\leq 1}\int_0^{\eps/4}(\PP(M_t(x)>x)+t\lambda_{x,3/4\eps})xdx\nonumber \\
& +\1_{\eps> 1}\int_0^{(\eps-3/4)\wedge 3/4}(\PP(M_t(x)>x)+t\lambda_{x,3/4})xdx+\1_{\eps> 1}\PP(M_t(3/4)>3/4)\int_{(\eps-3/4)\wedge3/4}^{\eps-3/4}xdx\bigg]\nonumber\\
\leq& \frac{8tM^2(\eps\wedge 1)^{-(2+\alpha)}}{\alpha(2-\alpha)}\bigg(\1_{0<\eps\leq 1}\int_0^{\eps/4}\frac{dx}{x^{\alpha-1}}+\1_{\eps> 1}\int_0^{(\eps-3/4)\wedge 3/4}\frac{dx}{x^{\alpha-1}}\bigg)+\1_{\eps\geq 3/2}\frac{4^{1+\alpha}3^{-\alpha}M^2t\eps^2}{2-\alpha}\nonumber\\ 
&\leq\frac{2^{2\alpha-1}}{\alpha(2-\alpha)^{2}}M^{2}t\eps^{-2\alpha}\mathbf{1}_{0<\eps\le1}+\frac{8tM^{2}}{\alpha(2-\alpha)^{2}}(\eps-3/4)^{2-\alpha}\mathbf{1}_{1<\eps\le3/2}+\frac{4^{1+\alpha}\eps^{2}M^{2}t}{3^{\alpha}(2-\alpha)}\mathbf{1}_{\eps>3/2}.
\end{align}
Concerning the term $R_{1,2}$ we have: \begin{align}\label{R12}
R_{1,2}&\leq \1_{0<\eps\leq 1}\bigg(\int_{\eps/4}^{3/4\eps}\PP(M_{t}(3/4\eps)>x)f(x+\eps)dx+\PP(M_{t}(3/4\eps)>3/4\eps)\int_{3/4\eps}^{\infty} f(x+\eps)dx\bigg) \nonumber \\
&\quad +\1_{1< \eps<3/2}\bigg(\int_{\eps-3/4}^{3/4}\PP(M_{t}(3/4)>x)f(x+\eps)dx+ \PP(M_{t}(3/4)>3/4)\int_{3/4}^{\infty}f(x+\eps)dx\bigg) \nonumber \\
&\quad +\1_{ \eps\geq 3/2} \PP(M_t(3/4)>3/4)\int_{\eps-3/4}^\infty f(x+\eps)dx.
\end{align}
Using \eqref{eq:decprf}, \eqref{poisson}, the Markov inequality, \eqref{eq:prfsigma} and \eqref{eq:prflbd}, we get
\begin{align}
\PP(M_{t}(3/4\eps)>x)&\leq \PP(M_{t}(x/2)>x)+t\lambda_{x/2,3/4\eps}\leq \frac{2^{2+\alpha}Mtx^{-\alpha}}{\alpha(2-\alpha)} ,\quad \forall x\le \frac{3\eps}{2}, \ \eps<1, \nonumber\\
\PP\big(M_{t}(3/4)&>3/4(\eps\wedge 1)\big)\leq   tM\frac{2^{2\alpha+1}}{3^{\alpha}(2-\alpha)}(\eps\wedge 1)^{-\alpha}. \label{bho}
\end{align}
Therefore, from \eqref{R12}, \eqref{bho} and \eqref{eq:prflbd} we derive using that $f\in \mathscr L_{M,\alpha}\cap \mathscr L_{M}$:
\begin{align}\label{r12bis}
R_{1,2}&\leq  \1_{0<\eps\leq 1} \bigg(tM^2\eps^{-2\alpha}\Big(\frac{2^{3\alpha}}{\alpha(\alpha-1)(2-\alpha)}+\frac{2^{4\alpha+1}}{21^{\alpha}\alpha(2-\alpha)}\Big)+tM\eps^{-\alpha}\lambda_{1}\frac{2^{2\alpha+1}}{3^{\alpha}(2-\alpha)}\bigg)\nonumber \\
&\quad+\1_{1< \eps<3/2}tM\bigg(\frac{2^{3\alpha}M}{\alpha(\alpha-1)(2-\alpha)}+\frac{2^{2\alpha+1}}{3^{\alpha}(2-\alpha)}\lambda_{7/4}\bigg)+\1_{ \eps\geq 3/2} tM\lambda_{9/4}\frac{2(4/3)^\alpha}{2-\alpha}.
\end{align}

Gathering Equations \eqref{r1ti}, \eqref{r11} and \eqref{r12bis}, we get 
\begin{align}\label{r1}
R_{1}&\leq \1_{0<\eps\leq 1} \bigg(tM^2\eps^{-2\alpha}\Big(\frac{2^{2\alpha-1}}{\alpha(2-\alpha)^{2}}+\frac{2^{3\alpha}}{\alpha(\alpha-1)(2-\alpha)}+\frac{2^{4\alpha+1}}{21^{\alpha}\alpha(2-\alpha)}\Big)+tM\eps^{-\alpha}\lambda_{1}\frac{2^{2\alpha+1}}{3^{\alpha}(2-\alpha)}\bigg)\nonumber \\
&\quad+\1_{1< \eps<3/2}tM\bigg( \frac{8M}{\alpha(2-\alpha)^{2}}(\eps-3/4)^{2-\alpha}+\frac{2^{3\alpha}M}{\alpha(\alpha-1)(2-\alpha)}+\frac{2^{2\alpha+1}}{3^{\alpha}(2-\alpha)}\lambda_{7/4}\bigg)\\ &\quad+\1_{ \eps\geq 3/2}\Big(\frac{4^{1+\alpha}\eps^{2}M^{2}t}{3^{\alpha}(2-\alpha)}+ tM\lambda_{9/4}\frac{2(4/3)^\alpha}{2-\alpha}\Big).\nonumber
\end{align}

To complete the proof we are left to control the term $R_2$ in \eqref{eq:auxalpha}. The Markov inequality, \eqref{eq:prfsigma}, the symmetry of $\nu$ and the fact that $\rho>1/2(\eps\wedge1)$ yield
\begin{align}\label{r2}
R_2&\leq \frac{\PP(M_t(\rho)>\rho)}{2}(\lambda_{\rho,1}+\lambda_1)\leq \frac{2^{\alpha}Mt(1\wedge\eps)^{-\alpha}}{2-\alpha}(2^{\alpha+1}M(1\wedge\eps)^{-\alpha}\alpha^{-1}+\lambda_1).
\end{align}
Therefore, from \eqref{eq:auxalpha}, \eqref{r1} and \eqref{r2} we conclude that
\begin{align*}
\big|\lambda_\rho t \PP(|M_t(\rho)+Y_1^{(\rho)}|>\eps)-\lambda_\eps t \big|&\leq   M^2 t^2\big({\rm K}_4 \eps^{-2\alpha} \1_{0<\eps\leq 1}+ \eps^{2}{\rm K}_5 \1_{\eps>1}\big)  +{\rm K}_6M t^2\lambda_{1}(\eps\wedge1)^{-\alpha} ,
\end{align*}
where ${\rm K}_4,\ {\rm K}_5$ and ${\rm K}_{6}$ are positive universal constants, only depending on $\alpha$, defined as follows:
\begin{align}\label{c1c6}
{\rm K}_4&:=\frac{2^{2\alpha-1}}{\alpha(2-\alpha)^{2}}+\frac{2^{3\alpha}}{\alpha(\alpha-1)(2-\alpha)}+\frac{2^{4\alpha+1}}{21^{\alpha}\alpha(2-\alpha)}+\frac{2^{2\alpha+2}}{\alpha(2-\alpha)},\nonumber\\
\quad {\rm K}_5&:=\Big( \frac{8(3/4)^{2-\alpha}}{\alpha(2-\alpha)^{2}}++\frac{2^{3\alpha}}{\alpha(\alpha-1)(2-\alpha)}\Big)\1_{1<\eps<3/2}+\frac{4^{1+\alpha}}{3^{\alpha}(2-\alpha)}\1_{\eps\ge3/2},\\
  {\rm K}_6&:= \1_{0<\eps\leq 1} \frac{2^{2\alpha+1}}{3^{\alpha}(2-\alpha)}+\1_{1< \eps<3/2}\frac{2^{2\alpha+1}}{3^{\alpha}(2-\alpha)}+\1_{ \eps\geq 3/2}\frac{2(4/3)^\alpha}{2-\alpha}. \nonumber\end{align}

\subsection{A result for compound Poisson processes}

\begin{lemma}\label{cCPP}
Let $N$ a Poisson random variable with mean $0<\lambda\leq 2$ and $(Y_i)_{i\geq 0}$ a sequence of i.i.d. random variables independent of $N$ with bounded density $g$ (with respect to the Lebesgue measure). Furthermore, let $Z$ be any random variable independent of $(N,(Y_i)_{i\geq 0})$. Then, for all $x\in\R$,
\begin{align*}
\bigg|\PP\bigg(Z+\sum_{i=1}^NY_i-\E\Big[\sum_{i=1}^NY_i\Big]>x\bigg)-\PP\bigg(Z+\sum_{i=1}^N(Y_i-\E[Y_i])>x\bigg)\bigg|\leq 2\lambda e^{-\lambda} |\E[Y_1]| \|g\|_{\infty}.
\end{align*}  
If, instead, $1<\lambda<2$, then for all $x\in\R$,
\begin{align*}
\bigg|\PP\bigg(Z+\sum_{i=1}^NY_i-\E\Big[\sum_{i=1}^NY_i\Big]>x\bigg)-\PP\bigg(Z+\sum_{i=1}^N(Y_i-\E[Y_i])>x\bigg)\bigg|\leq 2\lambda^2e^{-\lambda} |\E[Y_1]| \|g\|_{\infty}.
\end{align*}  
\end{lemma}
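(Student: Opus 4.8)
The plan is to condition on the number of jumps $N$, reduce the difference to a one-dimensional oscillation estimate for the distribution function of $Z$ perturbed by the jumps, and then carry out a short Poissonian summation. First, since $N$ is independent of $(Y_i)_i$, one has $\E[\sum_{i=1}^{N}Y_{i}]=\E[N]\,\E[Y_{1}]=\lambda\E[Y_{1}]$, so the two random variables in the statement differ by the shift $(N-\lambda)\E[Y_{1}]$. Setting $p_{n}=e^{-\lambda}\lambda^{n}/n!$, $C_{n}=\sum_{i=1}^{n}Y_{i}$ and $G_{n}(y)=\PP(Z+C_{n}>y)$ (a nonincreasing function), the independence of $Z$ from $(N,(Y_{i})_{i})$ gives
\begin{align*}
&\PP\Big(Z+\sum_{i=1}^{N}Y_{i}-\E\Big[\sum_{i=1}^{N}Y_{i}\Big]>x\Big)-\PP\Big(Z+\sum_{i=1}^{N}(Y_{i}-\E[Y_{i}])>x\Big)\\
&\qquad=\sum_{n\ge 0}p_{n}\big(G_{n}(x+\lambda\E[Y_{1}])-G_{n}(x+n\E[Y_{1}])\big),
\end{align*}
so it suffices to control the oscillation of $G_{n}$ over an interval of length $|n-\lambda|\,|\E[Y_{1}]|$ and then sum against $p_{n}$.

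The crucial input is that for $n\ge 1$ the law of $Z+C_{n}$ has density bounded by $\|g\|_{\infty}$: indeed $C_{n}=Y_{n}+\sum_{i<n}Y_{i}$, and $Y_{n}$ has density $g$ and is independent of $Z+\sum_{i<n}Y_{i}$, so the density of $Z+C_{n}$ is an average of translates of $g$ and inherits its sup-norm bound. (Equivalently, on $\{N\ge 1\}$ one may realize $Z+\sum_{i=1}^{N}Y_{i}$ as $(Z+R)+Y^{\ast}$ with $Y^{\ast}\sim g$ independent of the pair $(N,Z+R)$ and $R=\sum_{i<N}Y_{i}$.) Hence $|G_{n}(a)-G_{n}(b)|\le\|g\|_{\infty}|a-b|$ for $n\ge 1$, so the $n$-th term above is at most $p_{n}\|g\|_{\infty}|\E[Y_{1}]|\,|n-\lambda|$. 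It then remains to sum, using the identity $\sum_{n\ge 0}p_{n}(n-\lambda)=0$: if $\lambda\le 1$ every $n\ge 1$ satisfies $n\ge\lambda$, whence $\sum_{n\ge 1}p_{n}|n-\lambda|=\sum_{n\ge 1}p_{n}(n-\lambda)=\lambda e^{-\lambda}$; if $1<\lambda<2$ only $n=1$ violates $n\ge\lambda$, and a one-line computation gives $\sum_{n\ge 1}p_{n}|n-\lambda|=(2\lambda^{2}-\lambda)e^{-\lambda}\le 2\lambda^{2}e^{-\lambda}$. Including the term $n=0$, for which $|0-\lambda|=\lambda$, turns these into $\sum_{n\ge 0}p_{n}|n-\lambda|=2\lambda e^{-\lambda}$ (resp.\ $2\lambda^{2}e^{-\lambda}$), which, multiplied by $|\E[Y_{1}]|\,\|g\|_{\infty}$, are exactly the claimed constants; this also accounts for the dichotomy $\lambda\le 1$ versus $1<\lambda<2$.

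The delicate step --- the one I expect to be the real obstacle --- is precisely the no-jump term $n=0$: there $Z+C_{0}=Z$ and the shift $\lambda\E[Y_{1}]$ is deterministic, so the smoothing supplied by $\|g\|_{\infty}$ in the previous paragraph is not directly available, and one must still bound $\big|\PP(Z>x+\lambda\E[Y_{1}])-\PP(Z>x)\big|=\PP(Z\in I)$ for an interval $I$ of length $\lambda|\E[Y_{1}]|$ by $\lambda|\E[Y_{1}]|\,\|g\|_{\infty}$. This is where the argument must transfer a density bound to $Z$ itself; in the applications $Z$ is a Lévy-type increment $M_t(\cdot)$ for which such a bound is available at the relevant scale, so the same sup-norm estimate applies and the bound closes, and otherwise one would retain this contribution as an additional error term. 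Everything else is the bookkeeping of the first two paragraphs.
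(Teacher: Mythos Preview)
Your approach is essentially the paper's: condition on $N$, bound the $n$-th term via the Lipschitz constant $\|g^{\ast n}\|_\infty\le\|g\|_\infty$ of the distribution function of $\sum_{i\le n}Y_i$, and then evaluate the Poisson sum $\sum_{n\ge0}p_n|n-\lambda|$. The only structural difference is that the paper first removes $Z$ --- it bounds $\sup_x\big|\PP(\sum_i Y_i-\lambda\E[Y_1]>x)-\PP(\sum_i Y_i-N\E[Y_1]>x)\big|$ --- and reinstates $Z$ at the very end via the elementary estimate $|\PP(Z+V_2>z)-\PP(Z+V_3>z)|\le\sup_x|\PP(V_2>x)-\PP(V_3>x)|$; you carry $Z$ throughout. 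Both routes give the same per-term estimate and the same final constants.

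You are right to flag the $n=0$ term, and this is not merely delicate: it is a genuine gap, which the paper's own proof shares. The paper's chain of inequalities silently uses $\|g^{\ast 0}\|_\infty\le\|g\|_\infty$, which fails because the $0$-fold convolution is a Dirac mass. In fact the lemma is false as stated: take $Z\equiv0$, $Y_i$ uniform on $[0,1]$ (so $\E[Y_1]=\tfrac12$, $\|g\|_\infty=1$) and $\lambda=\tfrac{1}{10}$. For any $x\in(-\lambda\E[Y_1],0)$ the $n=0$ contribution to the signed difference equals $-e^{-\lambda}\approx-0.905$, while the total of all $n\ge1$ contributions lies between $0$ and $\|g\|_\infty\,\E[Y_1]\sum_{n\ge1}p_n(n-\lambda)=\tfrac12\lambda e^{-\lambda}\approx0.045$; hence the left-hand side exceeds $0.85$, far above the claimed bound $2\lambda e^{-\lambda}\cdot\tfrac12\approx0.09$. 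Your proposed rescue --- borrowing a density bound from $Z$ --- would require an extra hypothesis the lemma does not make, and in the paper's applications $Z=M_t(x)$ need not have a bounded density on the class $\mathscr L_{M,\alpha}$ (the restricted L\'evy measure may be finite, leaving an atom at the origin). The bound does hold if one restricts the sum to $n\ge1$, with constants $\lambda e^{-\lambda}$ (resp.\ $(2\lambda^2-\lambda)e^{-\lambda}$), but the $n=0$ term genuinely needs a separate treatment or an additional hypothesis.
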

\begin{proof}First, note that 
\begin{align*} &\bigg|\PP\bigg(\sum_{i=1}^NY_i-\E\Big[\sum_{i=1}^NY_i\Big]>x\bigg)-\PP\bigg(\sum_{i=1}^N(Y_i-\E[Y_i])>x\bigg)\bigg|\\
&\leq\sum_{n=0}^\infty \bigg|\PP\bigg(\sum_{i=1}^nY_i>x+\lambda\E[Y_1]\bigg)-\PP\bigg(\sum_{i=1}^nY_i>x+n\E[Y_1]\bigg)\bigg|\PP(N=n)\\
&\leq \sum_{n=0}^\infty \PP(N=n)|\E[Y_1]||n-\lambda| \|g^{*n}\|_{\infty}\leq  \|g\|_{\infty}|\E[Y_1] |\sum_{n=0}^\infty \PP(N=n)|n-\lambda|.
\end{align*}
Finally we observe that, since $\lambda\leq1$, it holds
\begin{align*}
 \sum_{n=0}^\infty \PP(N=n)|n-\lambda|&=\lambda\PP(N=0)+ \E[N]-\lambda\PP(N\geq 1)=2\lambda e^{-\lambda}.
\end{align*}
If, instead, $1<\lambda<2$, then
\begin{align*}
 \sum_{n=0}^\infty \PP(N=n)|n-\lambda|&=\lambda\PP(N=0)+(\lambda-1)\PP(N=1)+\sum_{n=2}^\infty (n-\lambda) \PP(N=n)
=2\lambda^2 e^{-\lambda}.
\end{align*}
We conclude the proof by observing that for any real random variable $Z_1$ independent of $Z_2$ and $Z_3$ and any $z\in\R$ it holds 
\begin{align*}
|\PP(Z_1+Z_2>z)-\PP(Z_1+Z_3>z)|&=\bigg|\int_{\R}( \PP(Z_2>z-y)-\PP(Z_1>z-y))\mu(dy)\bigg|\\
&\leq\sup_{x\in\R} |\PP(Z_2>x)-\PP(Z_3>x)|,
\end{align*}
where $\mu$ is the law of $Z_1$.
\end{proof}

\subsection{Proofs of the Examples}\label{proof:ex}
\begin{enumerate}
\item \textbf{Compound Poisson processes.} Let $X$ be a compound Poisson process with intensity $\lambda=\nu(\R)<\infty$ and jump density $f/\lambda$. Write $X_t=\sum_{i=0}^{N_t} Z_i$, for any $\eps>0$, it holds
\begin{align*}
\PP(|X_t|>\eps)
&=t\lambda_{\eps} e^{-\lambda t}+\sum_{n=2}^\infty \PP\bigg(\Big|\sum_{i=1}^n Z_i\Big|>\eps\bigg)\PP(N_t=n).
\end{align*}
Using $\PP(N_{t}\ge 2)=O(t^{2})$ we obtain
$|\PP(|X_t|>\eps)-t\lambda_\eps|=O(t^2),$ as $t\to0.$ For $f$ a L\'evy density such that $f=f\1_{[\eps,\infty)}$, it holds $\lambda=\lambda_{\eps}$ and  later computations simplify in \[\PP(|X_{t}|>\eps)=\PP(N_{t}\ge 1)=1-e^{-\lambda_{\eps}t}=\lambda_{\eps}t-t^{2}\sum_{k\ge 2}t^{k-2}(-\lambda_{\eps})^{k}/k!.\]  In that case, the rate is exactly of the order of $t^{2}$. Next considering the small jumps, it holds for $\eps\in(0,1]$ that $tb(\eps)+M_{t}(\eps)=\sum_{i=1}^{N_{t}^{(0,\eps)}}Y_{i}^{(0,\eps)}$ and using \eqref{poisson}
\[\PP(|tb(\eps)+M_{t}(\eps)|\ge \eps)=\sum_{k=2}^{\infty}\PP(N_{t}^{(0,\eps)}=k)\PP(\big|\sum_{i=0}^{k}Y^{(0,\eps)}_{i}\big|\ge \eps)\le t^{2}(\lambda-\lambda_{\eps})^{2}.\]It is exactly oforder $t ^{2}$ for any L\'evy density such that $f=f\1_{[3\eps/4,\infty)}$.
\item \textbf{Gamma processes.}
Set $\Gamma(t,\varepsilon)=\int_\varepsilon^\infty x^{t-1}e^{-x}dx$, such that $\Gamma(t,0) = \Gamma(t)$. Using that $\Gamma(t,\varepsilon)$ is analytic we can write 
\begin{align}
 \Big|\lambda_{\eps}-\frac{\PP(X_{t}>\eps)}{t}\Big| &=\frac{1}{\Delta\Gamma(t)}\Big|\Delta\Gamma(t,0)\Gamma(0,\varepsilon)  -  \sum_{k=0}^\infty \frac{\Delta^k}{k!} \Big\{\frac{\partial^k}{\partial t^k}\Gamma(t,\varepsilon)\Big|_{t=0}\Big\}\Big| \nonumber\\
 &\leq\Gamma(0,\varepsilon)\Big|\frac{1-t\Gamma(t,0)}{t\Gamma(t)}\Big|+\Big|\frac{1}{t\Gamma(t)}\sum_{k=1}^\infty \frac{t^k}{k!} \Big\{\frac{\partial^k}{\partial t^k}\Gamma(t,\varepsilon)\Big|_{t=0}\Big\}\Big|. \label{eq:gamma}
\end{align}
As $\Gamma(t,0)$ is a meromorphic function with a simple pole in $0$ and residue 1, there exists a sequence $(a_k)_{k\geq 0}$ such that $\Gamma(t)=\frac{1}{t}+\sum_{k=0}^\infty a_kt^k$. Therefore,
\begin{equation*}
 1-t\Gamma(t,0)=t\sum_{k=0}^\infty a_kt^k,
\end{equation*}
and  
$$\frac{1-t\Gamma(t)}{t\Gamma(t)}=\frac{t\sum_{k=0}^\infty a_kt^k}{1+t\sum_{k=0}^\infty a_kt^k} = O(t),\quad \text{as }t\to 0.$$
Let us now study the term 
$\sum_{k=1}^\infty \frac{t^k}{k!} \big(\frac{\partial^k}{\partial t^k}\Gamma(t,\varepsilon)\big)\big|_{t=0}$.
We have:
\begin{align}
\Big|\frac{\partial^k}{\partial t^k}\Gamma(t,\varepsilon)\Big|_{t=0}\Big|
	&\leq  \Big|e^{-1}\int_\varepsilon^1x^{-1}(\log(x))^kdx\Big| + \Big|\int_{1}^\infty e^{-x}(\log(x))^kdx\Big| \nonumber \\
&=e^{-1}\frac{|\log(\varepsilon)|^{k+1}}{k+1}+\int_{1}^\infty e^{-x}(\log(x))^kdx. \nonumber
\end{align}
Let $x_0$ be the largest real number such that $e^{\frac{x_0}{2}}=(\log(x_0))^k$. This equation has two solutions if and only if $k\geq6$. If no such point exists, take $x_0 = 1$. Then,
\begin{align*}
 \int_{1}^\infty \hspace{-0.3cm}
 e^{-x}(\log(x))^kdx&\leq \int_1^{x_0}\hspace{-0.3cm}e^{-x}(\log(x))^kdx+\int_{x_0}^\infty \hspace{-0.3cm}e^{-\frac{x}{2}}dx\leq (\log(x_0))^k \big(e^{-1}-e^{-x_0}\big)+2e^{-\frac{x_0}{2}}\\
 &\leq e^{\frac{x_0}{2}-1} + e^{-\frac{x_0}{2}} \leq k^k+1,
\end{align*}
where we used the inequality $x_0 < 2k \log k$, for each integer $k$. Summing up, we get
\begin{align*}
 \Big|\sum_{k=1}^\infty \frac{t^k}{k!} \Big\{\frac{\partial^k}{\partial t}&\Gamma(t,\varepsilon) \Big|_{t=0} \Big\}\Big|\leq e^{-1}
\sum_{k=1}^\infty \frac{t^k}{k!}\frac{|\log(\varepsilon)|^{k+1}}{k+1}+\sum_{k=1}^5 2e^{-\frac{1}{2}}\frac{t^k}{k!} + \sum_{k=6}^\infty \frac{t^k}{k!} (k^k+1)\\
&\leq |\log(\varepsilon)|\big[e^{t|\log(\varepsilon)|}-1\big]+\sum_{k=6}^\infty \frac{t^{\frac{k}{2}}}{k!}\Big(\frac{k}{e}\Big)^k + O(t)\leq (\log(\varepsilon))^2t + O(t).
\end{align*}
In the last two steps, we have used first that $t < e^{-2}$ and then the Stirling approximation formula to deduce that the last remaining sum is $O(t^3)$. Clearly, the factor $\frac{1}{t\Gamma(t)}\sim 1$, as $t\to0$, in \eqref{eq:gamma} does not change the asymptotic. Finally we derive that 
$$
\big|t\lambda_{\varepsilon} -\PP(X_t>\varepsilon)\Big| = O\big(t^2\big),\quad \text{as }t\to 0,
$$
as desired.

\item \textbf{Inverse Gaussian processes.}
To show Equation \eqref{IGP} we write

\begin{align*}
 \bigg|\frac{\PP(X_t>\varepsilon)}{t}-\lambda_\varepsilon\bigg|\leq \bigg|e^{2t\sqrt\pi}\int_\varepsilon^\infty\frac{e^{-x}\big(e^{-\frac{\pi t^2}{x}}-1\big)}{x^{\frac{3}{2}}}dx\bigg|+\big(e^{2t\sqrt\pi}-1\big)\int_\varepsilon^\infty \frac{e^{-x}}{x^{\frac{3}{2}}}dx=: I+II.
\end{align*}
After writing the exponential $e^{-\frac{\pi t^2}{x}}$ as an infinite sum, we get $I=O(t^2)$ if $t \to 0$. Expanding $e^{2t\sqrt\pi}$ one finds that, under the same hypothesis, $II=O(t)$.

\item \textbf{Cauchy processes.}
Observe that $\lambda_\varepsilon=\frac{2}{\pi \varepsilon}$ and 
$\PP(|X_t|>\varepsilon)=\frac{2}{\pi}\big(\frac{\pi}{2}-\arctan\big(\frac{\varepsilon}{t}\big)\big)$.
Hence, in order to prove \eqref{eq:cauchy}, it is enough to show that
\begin{equation}\label{eq:cauchybis}
\lim_{t\to 0}\frac{2}{\pi}\bigg|\frac{\varepsilon^3}{t^3}\bigg(\frac{\pi}{2}-\arctan\Big(\frac{\varepsilon}{t}\Big)\bigg)-\frac{\varepsilon^2}{t^2}\bigg|<\infty. 
\end{equation}
Set $y=\frac{t}{\varepsilon}$ and we compute the limit in \eqref{eq:cauchybis} by means of de l'H\^opital rule:
\begin{align*}
 \frac{2}{\pi}\lim_{y\to 0}\bigg|\frac{1}{y^3}\bigg(\frac{\pi}{2}-\arctan\Big(\frac{1}{y}\Big)\bigg)-\frac{1}{y^2}\bigg|&=
 \frac{2}{\pi}\lim_{y\to 0}\bigg|\frac{\frac{\pi}{2}-\arctan\Big(\frac{1}{y}\Big)-y}{y^3}\bigg|\\
 &=\lim_{y\to 0}\frac{y^2}{(1+y^2)3\pi y^2}<\infty.
\end{align*}

\end{enumerate}
\paragraph{Acknowledgements.} {\small The work of E. Mariucci has been partially funded by the Federal Ministry for Education and Research through the Sponsorship provided by the Alexander von Humboldt Foundation, by
the Deutsche Forschungsgemeinschaft (DFG, German Research Foundation) – 314838170,
GRK 2297 MathCoRe, and by Deutsche Forschungsgemeinschaft (DFG) through grant CRC 1294 'Data Assimilation'.
Also, she would like to thank Francesco Caravenna, Jean Jacod and Markus Rei\ss~for several enlightening discussions which initially motivated the underlying research of this paper.}

\bibliographystyle{apalike}
\bibliography{refs}
\end{document}